\documentclass[12pt,reqno]{amsart}
\usepackage[a4paper]{geometry}

\usepackage{amssymb,amsmath,amsthm,enumerate,bbm,epigraph}
\usepackage{graphicx}
\usepackage{color}

\DeclareMathOperator{\Dom}{Dom}
\DeclareMathOperator{\Ran}{Ran}
\DeclareMathOperator{\Ker}{Ker}
\DeclareMathOperator{\rank}{rank}

\DeclareMathOperator{\Tr}{Tr}
\DeclareMathOperator{\sech}{sech}
\DeclareMathOperator{\spec}{spec}

\renewcommand{\Re}{\operatorname{Re}}

\newcommand{\abs}[1]{\lvert#1\rvert}
\newcommand{\Abs}[1]{\left\lvert#1\right\rvert}
\newcommand{\norm}[1]{\lVert#1\rVert}

\newcommand{\jap}[1]{\langle#1\rangle}

\newcommand{\comp}{{\mathrm{comp}}}

\newcommand{\bbR}{{\mathbb R}}
\newcommand{\bbC}{{\mathbb C}}
\newcommand{\bbZ}{{\mathbb Z}}

\newcommand{\calB}{\mathcal{B}}

\newcommand{\calS}{\mathcal{S}}

\newcommand{\1}{\mathbf{1}}

\newcommand{\dd}{\mathrm d}
\newcommand{\ii}{\mathrm i}
\newcommand{\ee}{\mathrm e}

\numberwithin{equation}{section}

\theoremstyle{plain}
\newtheorem{theorem}{\bf Theorem}[section]
\newtheorem*{theorem*}{Theorem}
\newtheorem{lemma}[theorem]{\bf Lemma}
\newtheorem{proposition}[theorem]{\bf Proposition}
\newtheorem*{proposition*}{\bf Proposition}

\newtheorem{corollary}[theorem]{\bf Corollary}

\theoremstyle{definition}

\newtheorem*{definition*}{\bf Definition}

\theoremstyle{remark}
\newtheorem*{remark*}{\bf Remark}
\newtheorem{remark}[theorem]{\bf Remark}
\newtheorem{example}[theorem]{\bf Example}
\newtheorem*{example*}{\bf Example}

\newcommand{\eps}{\varepsilon}
\newcommand{\mySigma}{\Delta}

\begin{document}

\title[Eigenfunctions of positive Hankel operators]{Eigenfunctions of positive integral Hankel operators}

\author{Alexander Pushnitski}
\address{Department of Mathematics, King's College London, Strand, London, WC2R~2LS, U.K.}
\email{alexander.pushnitski@kcl.ac.uk}

\date{6 June 2026}

\begin{abstract}
We consider bounded positive semi-definite Hankel operators $H$, realised as integral operators on the positive semi-axis. For each value of $E$, not necessarily in the spectrum of $H$, we analyse solutions $f$ of the eigenvalue equation $Hf=Ef$, understood as an integral equation on the semi-axis. Our analysis reveals strong analogies with properties of solutions of the one-dimensional Schr\"odinger equation. 
\end{abstract}

\maketitle

\setlength{\epigraphwidth}{0.6\textwidth}
\epigraph{A mathematician is a person who can find analogies between theorems; a better mathematician is one who can see analogies between proofs and the best mathematician can notice analogies between theories.}{Stefan Banach}

\setcounter{tocdepth}{1}
\tableofcontents

\section{Introduction}\label{sec:a}

\subsection{Integral Hankel operators}
The main objects of interest in this paper are integral Hankel operators $H$, 
\begin{equation}
Hf(t)=\int_0^\infty h(t+s)f(s)\dd s, \quad t>0,
\label{eq:a4}
\end{equation}
viewed as linear operators on $L^2(\bbR_+)$. The function $h$ is known as the kernel function.

We will only be interested in positive (i.e. positive semi-definite) operators $H$. 
Positivity of $H$ is equivalent to a representation of the kernel function $h$ as the Laplace transform of a measure:
\begin{equation}
h(t)=\int_0^\infty \ee^{-t\lambda}\dd\mu(\lambda), \quad t>0.
\label{eq:a6}
\end{equation}
This fact is the continuous analogue of the solution to the classical Hamburger moment problem; see e.g. \cite[Theorems 5.1 and 5.3]{Ya2} for the precise statement and proof in a very general context. We will view the measure $\mu$ as the main functional parameter defining the operator $H$ and therefore we will sometimes write $h_\mu$ for the kernel function \eqref{eq:a6} and $H_\mu$ for $H$.  

The boundedness of $H_\mu$ is equivalent to the Carleson condition 
\begin{equation}
\mu([0,a))\leq C_\mu a, \quad a>0,
\label{eq:Carleson}
\end{equation}
see Proposition~\ref{prp:a1} below. We will assume \eqref{eq:Carleson} throughout the paper.

\begin{example}
The following basic example is of fundamental importance in the theory. 
Let $\mu$ be the Lebesgue measure on $\bbR_+$; then $h_\mu(t)=1/t$. The corresponding Hankel operator $H_\mu$ is known as the Carleman operator. 
The Carleman operator is bounded and self-adjoint.  Its spectrum is the interval $[0,\pi]$ and therefore its norm is $\norm{H_\mu}=\pi$, see e.g. \cite[Section 10.2]{Peller}.
\end{example}

\begin{example}
Let $\mu$ be a finite linear combination of point masses on $\bbR_+$ with positive coefficients. Then $H_\mu$ is a finite rank operator. 
\end{example}

\subsection{Hankel and Schr\"odinger}
It has been noticed in several places \cite{Howland,Ya3,PuSobolev} that the spectral properties of positive Hankel operators are in many ways analogous to the spectral properties of self-adjoint Schr\"odinger operators $S_v$ on the real line, 
\begin{equation}
S_vf(x)=-f''(x)+v(x)f(x)\quad\text{ in $L^2(\bbR)$.}
\label{eq:a6aa}
\end{equation}
To avoid technicalities, we shall always assume that the real-valued potential function $v$ is bounded on $\bbR$. Our purpose here is to extend this analogy from \emph{spectra} to  \emph{eigenfunctions}. 
More precisely, in the spectral theory of Schr\"odinger operator it is customary to consider  solutions $f$ to the Schr\"odinger equation
\begin{equation}
-f''(x)+v(x)f(x)=Ef(x), \quad x\in\bbR,
\label{eq:a8}
\end{equation}
where  $E$ may or may not be in the spectrum of $S_v$ and $f$ may or may not be in $L^2(\bbR)$. The behaviour of $f(x)$ at infinity is then related to the question of whether $E$ is in the spectrum of $S_v$. 

In the same way, in this paper we consider solutions to the integral equation 
\begin{equation}
\int_0^\infty h_\mu(t+s)f(s)\dd s=Ef(t), \quad t>0,
\label{eq:b7}
\end{equation}
where $E$ may or may not be in the spectrum of $H_\mu$ and $f$ may or may not be in $L^2(\bbR_+)$; we only require that the integral in \eqref{eq:b7} converges absolutely, so that the equation makes sense pointwise. 
By analogy with \eqref{eq:a8}, we will refer to \eqref{eq:b7} as the \emph{Hankel equation}.  

To avoid trivialities, we will always assume $H_\mu\not=0$ and we will only be interested in non-zero solutions to \eqref{eq:b7}. 

\subsection{General, regular and semi-regular cases}
Our main results are split into four groups, corresponding to different assumptions on the measure $\mu$. We view these four groups as analogous to problems for the Schr\"odinger equation
\begin{itemize}
\item
on the whole line $\bbR$ (general case);
\item
on $[0,\infty)$ and $(-\infty,0]$ (two semi-regular cases);
\item
on the finite interval $[0,1]$ (regular case).
\end{itemize}
We pause to describe our assumptions on the measure $\mu$ in each case. As already mentioned,  the measure $\mu$ is assumed to be Carleson \eqref{eq:Carleson} througout the paper. 
As usual, a measure $\mu$ is called \emph{finite}, if 
\begin{equation}
\int_0^\infty \dd\mu(\lambda)<\infty.
\label{eq:a10}
\end{equation}
Of course, for Carleson measures the finiteness condition is only needed at infinity, i.e. \eqref{eq:a10} is equivalent to 
\[
\int_1^\infty \dd\mu(\lambda)<\infty.
\]
We will call $\mu$ \emph{co-finite} (in the terminology of \cite{PuTreil}) if 
\begin{equation}
\int_0^\infty \frac{\dd\mu(\lambda)}{\lambda^2}<\infty.
\label{eq:a11}
\end{equation}
For Carleson measures, an elementary calculation with integrating by parts shows that co-finiteness is only needed at zero, i.e. \eqref{eq:a11} is equivalent to 
\[
\int_0^1 \frac{\dd\mu(\lambda)}{\lambda^2}<\infty.
\]

We will consider the following cases:
\begin{itemize}
\item
The \emph{general case} corresponds to \emph{all} Carleson measures. 
\item
In the \emph{semi-regular case}, one of the conditions \eqref{eq:a10}, \eqref{eq:a11} is assumed to hold, which corresponds to \emph{finite semi-regular} and the \emph{co-finite semi-regular} cases.
\item
In the \emph{regular case}, both \eqref{eq:a10} and \eqref{eq:a11} are assumed to hold.
\end{itemize}

We indicate the intended analogies between Schr\"odinger equation and Hankel equation in the table. 

{\renewcommand{\arraystretch}{1.3}  
\begin{center}
\begin{tabular}{|l|l|l|}
\hline
Hankel: terminology & Condition on $\mu$ & Schr\"odinger equation 
\\
\hline
General & Carleson & on $\bbR$
\\
Finite semi-regular & Carleson and finite & on $[0,\infty)$
\\
Co-finite semi-regular & Carleson and co-finite & on $(-\infty,0]$
\\
Regular & Both finite and co-finite & on $[0,1]$
\\
\hline
\end{tabular}
\end{center}
}
\begin{remark}
\begin{enumerate}[\rm (i)]
\item
In the regular case, the assumption that $\mu$ is both finite and co-finite implies that $\mu$ satisfies the Carleson condition:
\[
\int_0^a \dd\mu(\lambda)
\leq a\int_0^a \frac{\dd\mu(\lambda)}{\lambda}
\leq a\int_0^\infty \frac{\dd\mu(\lambda)}{\lambda}
\leq \frac{a}{2}\int_0^\infty \biggl(1+\frac1{\lambda^2}\biggr)\dd\mu(\lambda).
\]
\item
In the regular case $H_\mu$ is trace class and we have
\[
\Tr H_\mu=\int_0^\infty h_\mu(2t)\dd t
=\frac12\int_0^\infty \frac{\dd\mu(\lambda)}{\lambda}
<\infty.
\]
In particular, the spectrum of $H_\mu$ is discrete. 
\end{enumerate}
\end{remark}

\subsection{The kernel and $H_\mu^\perp$}
We recall that the kernel $\Ker H_\mu$ of a Hankel operator $H_\mu$ is either trivial or infinite-dimensional, see e.g.  \cite{KP}. It will be convenient to denote 
\[
H_\mu^\perp=H_\mu|_{(\Ker H_\mu)^\perp}. 
\]
To be precise, our analogy between Schr\"odinger operators and Hankel operators refers to $H_\mu^\perp$ rather than $H_\mu$:
\[
S_v\leftrightarrow H_\mu^\perp.
\]
We also recall \cite{KP} that $0$ is always in the spectrum of $H_\mu$, but may or may not be in the spectrum of $H_\mu^{\perp}$. 

\subsection{Spectral multiplicity}

To motivate our main results, we start from a discussion of spectral multiplicity; 
see e.g. \cite[Section VII.2]{RS1} for general background information. 

To set the scene, we recall the well-known facts concerning the Schr\"odinger operator $S_v$. 
\begin{proposition}
\label{prp:a00}
\begin{enumerate}[\rm (i)]
\item
Let $S_v$ be a Schr\"odinger operator \eqref{eq:a6aa} on $\bbR$. Then 
the multiplicity of the spectrum of $S_v$ is $\leq2$. 
\item
Let $S_v$ be a Schr\"odinger operator on the half-line $[0,\infty)$ (with any self-adjoint boundary condition at 0). Then $S_v$ has simple spectrum. 
\end{enumerate}
\end{proposition}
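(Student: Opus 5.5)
The plan is to use the Weyl--Titchmarsh--Kodaira approach: construct an explicit unitary "eigenfunction expansion" of $S_v$. The number of linearly independent solutions of \eqref{eq:a8} that are needed to build the spectral representation then bounds the multiplicity.

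\textbf{Step 1: the whole line, part (i).} Fix a point, say $x=0$. For each $E\in\bbR$, let $\varphi(x,E)$ and $\theta(x,E)$ be the solutions of \eqref{eq:a8} with initial data $\varphi(0)=0$, $\varphi'(0)=1$ and $\theta(0)=1$, $\theta'(0)=0$. Since $v$ is bounded, these solutions exist, are real, and are entire in $E$ for each fixed $x$.

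Weyl--Kodaira theory gives a $2\times 2$ positive matrix-valued measure $\dd\Sigma(E)$, built from the Weyl functions $m_\pm$ of the two half-lines. It has the following property. The map
\[
Uf(E)=\Bigl(\int_\bbR f(x)\theta(x,E)\dd x,\ \int_\bbR f(x)\varphi(x,E)\dd x\Bigr),
\]
defined first for compactly supported $f$, extends to a unitary map from $L^2(\bbR)$ onto $L^2(\bbR,\dd\Sigma;\bbC^2)$. Under this map $S_v$ becomes multiplication by $E$.

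Multiplication by $E$ on $L^2(\dd\Sigma;\bbC^2)$ has multiplicity at most $2$. To see this, write $\dd\Sigma=W\,\dd\rho$ with $\rho=\Tr\Sigma$ and $W(E)$ a positive $2\times2$ matrix. The space then decomposes into the ranges of $W(E)$, whose dimension is pointwise at most $2$. This dimension is exactly the local multiplicity.

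\textbf{Step 2: the half-line, part (ii).} Here only one solution is used. Take $\varphi(x,E)$ satisfying the self-adjoint boundary condition at $0$, for example $\cos\alpha\,\varphi(0)+\sin\alpha\,\varphi'(0)=0$, normalised by a fixed initial datum. The Weyl--Titchmarsh $m$-function of the half-line problem yields a scalar spectral measure $\rho$ such that
\[
f\mapsto\int_0^\infty f(x)\varphi(x,E)\dd x
\]
is unitary from $L^2(\bbR_+)$ onto $L^2(\bbR,\dd\rho)$ and diagonalises $S_v$. Multiplication by $E$ on a scalar $L^2(\dd\rho)$ has simple spectrum, so $S_v$ has simple spectrum.

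\textbf{Step 3: avoiding the full expansion.} Alternatively, one can argue directly that the spectrum is simple by exhibiting a cyclic vector. For $\Im z\neq0$ and compactly supported $g$, the resolvent $(S_v-z)^{-1}g$ coincides, outside the support of $g$, with a multiple of the Weyl solution $\psi_+(\cdot,z)$ that lies in $L^2$ near $+\infty$. One then approximates a point mass at $0$ (respecting the boundary condition) and uses Stone--Weierstrass in $z$ to see that vectors of the form $(S_v-z)^{-1}g$ span a dense set.

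In the whole-line case the same reasoning gives two generators, $\delta_0$ and $\delta_0'$ suitably regularised. This shows that $L^2(\bbR)$ is the closed span of two cyclic subspaces, which again gives multiplicity at most $2$.

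\textbf{Main obstacle.} The substantive step is the unitarity, specifically the surjectivity and completeness, of the eigenfunction transform. This requires the limit-point property at $\pm\infty$, which holds here because $v$ is bounded. It also requires the Stieltjes inversion formula applied to the Green's function
\[
G(x,y;z)=\frac{\psi_-(x_<,z)\,\psi_+(x_>,z)}{W(\psi_-,\psi_+)}.
\]
I would prove these by the standard route: first on $[-n,n]$ or $[0,n]$ with boundary conditions, where the spectrum is discrete and the expansion is elementary, and then pass to the limit $n\to\infty$ via Helly's selection theorem. Since these facts are classical (see e.g.\ \cite{RS1} and Titchmarsh), in the paper it suffices to cite them.
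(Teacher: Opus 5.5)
Your proposal is correct and matches the paper's reasoning: the paper gives no detailed proof, only noting that the order-two equation has a two-dimensional solution space, which becomes one-dimensional once the boundary condition at $0$ is imposed, and it treats the result as classical. Your Weyl--Titchmarsh--Kodaira expansion, with a $2\times2$ matrix measure on $\bbR$ and a scalar measure on the half-line, is the standard rigorous form of that remark; Step~3 is only a sketch (the cyclicity of a regularised $\delta_0$ or $\delta_0'$ would need the Weyl-solution density argument spelled out), but Steps~1--2 already suffice.
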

Both parts are well-known: (i) holds because the Schr\"odinger equation \eqref{eq:a8} is a differential equation of order two, which has two linearly independent solutions; (ii) holds because there is only one solution (up to scaling) satisfying the given boundary condition at zero.

The following proposition provides the precise analogy for Hankel operators. 
\begin{proposition}\cite{MPT,PuTreil}
\label{prp:a0}
Let $\mu$ be a Carleson measure on $(0,\infty)$ and let $H_\mu$ be the corresponding Hankel operator.
\begin{enumerate}[\rm (i)]
\item
The multiplicity of the spectrum of $H_\mu^{\perp}$ is $\leq2$. 
\item
Assume that $\mu$ is semi-regular (either finite or co-finite). Then $H_\mu^\perp$ has simple spectrum. 
\end{enumerate}
\end{proposition}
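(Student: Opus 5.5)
Since $H_\mu\ge0$, the natural move is to factor $H_\mu=\mathcal{L}^*\mathcal{L}$ through the Laplace transform and transfer the multiplicity question to an operator on $L^2(\mu)$ with more structure. Define $\mathcal{L}:L^2(\bbR_+)\to L^2(\mu)$ by
\[
(\mathcal{L}f)(\lambda)=\int_0^\infty \ee^{-t\lambda}f(t)\dd t .
\]
The Carleson condition \eqref{eq:Carleson}, via the Carleson embedding for the Laplace transform (Proposition~\ref{prp:a1}), makes $\mathcal{L}$ bounded. Substituting \eqref{eq:a6} into \eqref{eq:a4} then gives $H_\mu=\mathcal{L}^*\mathcal{L}$.

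\textbf{Step 1: reduction to $\mathcal{L}\mathcal{L}^*$.} By polar decomposition, $H_\mu^\perp=\mathcal{L}^*\mathcal{L}|_{(\Ker\mathcal{L})^\perp}$ is unitarily equivalent to $\mathcal{L}\mathcal{L}^*|_{(\Ker\mathcal{L}^*)^\perp}$. So it suffices to study $G=\mathcal{L}\mathcal{L}^*$ on $L^2(\mu)$, which is the integral operator
\[
(Gg)(\lambda)=\int_0^\infty \frac{g(\sigma)}{\lambda+\sigma}\dd\mu(\sigma).
\]
This is a Stieltjes-type (generalised Carleman) operator. Its kernel $1/(\lambda+\sigma)$ is a Cauchy kernel, so it satisfies a commutation relation with the multiplication operator $M$, $(Mg)(\lambda)=\lambda g(\lambda)$:
\[
MG+GM=\langle\,\cdot\,,\1\rangle_{L^2(\mu)}\1 .
\]
This holds formally, with $\1$ the constant function. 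When $\mu$ is finite, $\1\in L^2(\mu)$ and the right-hand side is an honest rank-one operator. In the co-finite case one instead uses the weighted version, replacing $M$ by $M^{-1}$ so that the relevant vector becomes $\lambda^{-1}\in L^2(\mu)$.

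\textbf{Step 2: simple spectrum in the semi-regular case (ii).} Suppose $\mu$ is finite, so that $MG+GM=\langle\cdot,\1\rangle\1$ is rank one. I would show that $\1$ is a cyclic vector for $G$ restricted to $(\Ker G)^\perp$. The argument: if $g\perp G^n\1$ for all $n$, apply the commutation relation repeatedly and use that the closed span of $\{G^n\1\}$ is invariant under $M$ modulo that span. This gives that the span is invariant under both $G$ and $M$. Since $M$ has simple spectrum on $L^2(\mu)$, with cyclic vector $\1$, any $M$-invariant subspace containing $\1$ is all of $L^2(\mu)$. Hence the span is the whole space (modulo $\Ker G$), and $G$ has a cyclic vector there, i.e.\ simple spectrum.

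The co-finite case is symmetric. The change of variables $\lambda\mapsto1/\lambda$, with $\mu$ replaced by $\lambda^{-2}\dd\mu$ pushed forward, maps the Stieltjes kernel to itself and swaps finiteness with co-finiteness. This corresponds to the unitary map $f(t)\mapsto$ (Hankel transform) that intertwines $H_\mu$ with $H_{\tilde\mu}$.

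\textbf{Step 3: multiplicity $\le2$ in the general case (i).} Split $\mu=\mu_0+\mu_\infty$ with $\mu_0=\mu|_{[0,1]}$ (finite) and $\mu_\infty=\mu|_{(1,\infty)}$ (co-finite). Then $G$ becomes a $2\times2$ block operator on $L^2(\mu_0)\oplus L^2(\mu_\infty)$, and the cyclicity argument of Step~2 applies with the two vectors $\1_{[0,1]}$ and $\lambda^{-1}\1_{(1,\infty)}$. More precisely, with the weighted multiplier $A=M$ on the first piece and $A=-M^{-1}$-type on the second, the commutator-type identity has rank at most two. The closed span of $\{G^n v_1,G^n v_2\}$ is then invariant under a simple-spectrum multiplication operator and contains vectors generating each piece. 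So $G|_{(\Ker G)^\perp}$ has a generating set of two vectors, which bounds the multiplicity by $2$.

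\textbf{Main obstacle.} The hardest step is the rigorous invariance argument in Step~2. One must show that invariance of the cyclic subspace under $G$ and the commutation relation force invariance under the unbounded $M$. This needs care: domain issues for $M$, and the passage from $\operatorname{span}\{G^n\1\}$ to $M$-invariance, presumably via resolvents $(G-z)^{-1}$ and the identity $(G-z)^{-1}M+\dots$. If the direct route fails, I would fall back on the explicit spectral model of \cite{MPT,PuTreil}, which diagonalises the Stieltjes operator through a Mellin-type transform.
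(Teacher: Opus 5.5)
Your Step~1 and the identity $MG+GM=\jap{\cdot,\1}\1$ are correct. This is the rank-one Lyapunov relation behind \cite{PuTreil}; on the $L^2(\bbR_+)$ side it is the first identity in \eqref{eq:c1a}, since $DH=-L_\mu^*ML_\mu$. Step~2, however, does not prove cyclicity. Iterating the relation gives
\[
MG^n\1=(-1)^nG^nM\1+\bigl(\text{a linear combination of }G^k\1,\ k<n\bigr).
\]
So invariance of $\mathcal K=\overline{\operatorname{span}}\{G^n\1\}$ under $M$ is equivalent to $M\1\in\mathcal K$, which is essentially the statement you want to prove. Repeated use of the relation cannot produce it. (The last sentence of Step~2 would also need the bounded functional calculus of $M$ rather than polynomials, since $\lambda^n$ need not lie in $L^2(\mu)$.)

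Note that your argument never uses $G\geq0$. That, not a domain issue, is the missing idea. The working argument is the one of \cite{MPT,PuTreil} sketched in the introduction; its eigenvector version is the proof of Lemma~\ref{lma:b0} and of Theorem~\ref{thm:b5}(iii). It goes through the orthogonal complement.

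Take the bounded operator $X=G^{1/2}MG^{1/2}$, for which $XG+GX=\jap{\cdot,v}v$ with $v=G^{1/2}\1$. This is $-DH$ and $h$ transported by $T_\mu$, and cyclicity of $v$ is equivalent to that of $\1$ because $\Ker G=\{0\}$. The complement $\mathcal N$ of the cyclic subspace of $v$ is $G$-invariant and orthogonal to $v$, so $Y=X|_{\mathcal N}$ satisfies $YG|_{\mathcal N}=-GY$. Hence $Y^*Y$ commutes with $G|_{\mathcal N}$. The polar decomposition of $Y$ then shows that $G$ restricted to $\mathcal N\ominus\Ker Y$ is unitarily equivalent to a part of $-G\leq0$, so it vanishes. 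Since $\Ker G=\{0\}$ and $\Ker X=\{0\}$, this gives $\mathcal N=\{0\}$. Your reduction of the co-finite case via $\lambda\mapsto1/\lambda$ is fine; it is the involution $\mu\mapsto\mu^{\#}$ of Section~\ref{sec:i}.

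Step~3 fails as stated. With the multiplier $M\oplus(-M^{-1})$ the diagonal blocks of its anticommutator with $G$ are indeed rank one. The off-diagonal block, however, has kernel
\[
\frac{x-y^{-1}}{x+y}=\frac{x^2+1}{x}\cdot\frac{1}{x+y}-\frac{1}{xy},\qquad 0<x\leq1<y,
\]
which is a Cauchy kernel (times a non-vanishing function of $x$) up to a rank-one term. Cauchy matrices $\bigl(1/(x_i+y_j)\bigr)$ with distinct nodes are nonsingular, so this block has infinite rank as soon as both pieces of $\mu$ have infinite support. Splitting $\mu$ this way produces no rank-two identity, and Step~3 also inherits the circular invariance claim of Step~2.

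The rank-two relation valid for every Carleson $\mu$ is \eqref{eq:f10}. It is built from the continuous analogue $\calS$ of the shift and needs no splitting of $\mu$. Since $L_\mu\calS=BL_\mu$ with $B$ the multiplication by $(\lambda-1)/(\lambda+1)$, on $L^2(\mu)$ it says that $BG^2-G^2B$ has rank two. Run the sign argument above on $L^2(\bbR_+)$ with the injective operator $\calS-\calS^*$ (see the end of Section~\ref{sec:f}), taking $\mathcal N$ to be the complement of the cyclic subspace generated by $e$ and $p$. This yields $\mathcal N\subset\Ker H$, i.e.\ multiplicity at most two for $H^\perp$.
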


Part (i) is due to \cite{MPT} and part (ii) is due to \cite{PuTreil}. Note that the spectrum of the Carleman operator has multiplicity two, so in this respect part (i) is optimal. 

Proposition~\ref{prp:a0} is phrased in the abstract language of spectral multiplicity. 
One of the aims of this paper is to ``rephrase it'' in the language of the number of linearly independent solutions to the Hankel equation. But first we need to discuss the functional class where these solutions naturally belong.

\subsection{The class ${\calB}_0$}
We start with the key example that informs our intuition. 
\begin{example}
Let us discuss solutions to the eigenvalue equation for the Carleman operator $H_\mu$ (here $\dd\mu(\lambda)=\dd\lambda$).
Consider the identity
\[
\int_0^\infty \frac{s^{-\frac12-\kappa}}{s+t}\dd s
=
\pi\sec(\pi \kappa)t^{-\frac12-\kappa}, \quad t>0, \quad \kappa\in\bbC.
\]
Taking $\kappa=\pm\ii k$ with $k>0$, we obtain two solutions
\begin{equation}
f_{\pm \ii k}(t)=t^{-\frac12\pm\ii k}
\label{eq:a6a}
\end{equation}
corresponding to $E=\pi \sech(\pi k)\in (0,\pi)$, which is in the spectrum of $H_\mu$. On the other hand, taking $\kappa$ real in $(-1/2,1/2)$, we obtain the solutions
\begin{equation}
f_\kappa(k)=t^{-\frac12\pm\kappa}
\label{eq:a6b}
\end{equation}
corresponding to $E=\pi\sec(\pi\kappa)>\pi$, which is not in the spectrum of $H_\mu$. 
\end{example}

This example suggests that the borderline behaviour for a Hankel eigenfunction with $E\in\spec(H)$ is 
\[
\abs{f(t)}\sim t^{-\frac12},
\] 
both as $t\to\infty$ and as $t\to0$. 
This motivates the following definition.

\begin{definition*}
Let $f$ be a measurable function on $\bbR_+$ and let $\eps\in(0,\frac12)$. We will say that $f\in \calB_\eps$, if there exists $C_\eps>0$ such that
\begin{align*}
\abs{f(t)}\leq C_\eps t^{-\frac12-\eps} \text{ for } 0<t<1
\quad\text{ and }\quad
\abs{f(t)}\leq C_\eps t^{-\frac12+\eps} \text{ for } t>1.
\end{align*}
We will say that $f\in {\calB}_0$, if $f\in {\calB}_\eps$ for all $\eps>0$. 
\end{definition*}

For example, for any $\alpha>0$ functions of the form
\[
f(t)=(1+\abs{\log t})^\alpha t^{-\frac12}
\]
are in ${\calB}_0$. The functions \eqref{eq:a6a} are in ${\calB}_0$, while \eqref{eq:a6b} are not. 

We regard ${\calB}_0$ as the natural function class for solutions to the Hankel equation  with $E$ in the spectrum, similarly to the class of polynomially bounded functions for solutions of the Schr\"odinger equation.

\subsection{General case}

Our first main result is 
\begin{theorem}\label{thm:b5}
Let $H_\mu$ be a bounded positive Hankel operator. 
\begin{enumerate}[\rm (i)]
\item 
Let $H_\mu f=Ef$ where $f\in {\calB}_0$ and $E>0$. Then $E\in\spec(H_\mu)$.
\item
For $H_\mu$-spectrally almost every $E>0$, there exists $f\in {\calB}_0$ with $H_\mu f=Ef$. 
\item
For any $E>0$, there are at most two linearly independent $f\in {\calB}_0$ satisfying $H_\mu f=Ef$. 
\end{enumerate}
\end{theorem}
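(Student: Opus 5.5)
The plan is to work in logarithmic variables, where the Hankel equation becomes a perturbed convolution equation, and to use the crude kernel bound coming from the Carleson condition \eqref{eq:Carleson}. Integrating by parts in \eqref{eq:a6} gives $h_\mu(t)\le C t^{-1}$. Writing $t=\ee^{x}$, $s=\ee^{y}$ and $f(t)=t^{-1/2}\varphi(\log t)$, the operator $H_\mu$ becomes unitarily equivalent on $L^2(\bbR,\dd x)$ to an integral operator $K$ with kernel $k(x,y)=\ee^{(x+y)/2}h_\mu(\ee^{x}+\ee^{y})$. This kernel satisfies
\[
0\le k(x,y)\le \frac{C}{2\cosh((x-y)/2)} .
\]
The class $\calB_0$ corresponds exactly to $\varphi$ with $|\varphi(x)|\le C_\eps\ee^{\eps|x|}$ for every $\eps>0$, i.e. subexponentially growing $\varphi$. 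So the picture is a self-adjoint operator with an exponentially decaying off-diagonal kernel, acting on subexponentially growing functions. This is precisely the setting of Schnol's theorem and of generalized eigenfunction expansions for Schr\"odinger operators.

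\emph{Part (i).} I would run a Schnol-type argument. Fix a smooth cutoff $\chi$ with $\chi=1$ on $[-1,1]$ and support in $[-2,2]$, and set $\varphi_R(x)=\chi(x/R)\varphi(x)$. Then
\[
(K-E)\varphi_R(x)=\int k(x,y)\bigl(\chi(y/R)-\chi(x/R)\bigr)\varphi(y)\,\dd y,
\]
because $K\varphi=E\varphi$. Bounding $|\chi(y/R)-\chi(x/R)|\le \min(1,C|x-y|/R)$ and using the exponential decay of $k$, this is controlled by $R^{-1}$ times the $L^2$ norm of $\varphi$ on the shell $R-O(\log R)\le|x|\le 2R+O(\log R)$. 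The tails are exponentially small, and the subexponential growth of $\varphi$ makes them negligible. Since $\varphi\ne0$ is subexponential, we cannot have $\norm{\varphi\1_{|x|<2R}}\ge \ee^{cR}\norm{\varphi\1_{|x|<R}}$ for all large $R$ (that would force exponential growth). Choosing a sequence $R_n\to\infty$ along which the shell norm is comparable to $\norm{\varphi_{R_n}}$, we obtain $\norm{(K-E)\varphi_{R_n}}/\norm{\varphi_{R_n}}\to0$. Hence $E\in\spec(H_\mu)$.

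\emph{Part (ii).} I would use a rigged Hilbert space (Berezanskii/Simon) argument. Let $W_\eps$ be multiplication by $\ee^{-\eps|x|}$. Then $W_\eps K$ is Hilbert--Schmidt, since by the kernel bound
\[
\iint \ee^{-2\eps|x|}\,k(x,y)^2\,\dd x\,\dd y
\le C\int \ee^{-2\eps|x|}\,\dd x\int \frac{\dd u}{\cosh^2(u/2)}<\infty .
\]
For any Borel set $\mySigma\subset(\delta,\infty)$ with $\delta>0$, the spectral projection factors as $E_K(\mySigma)=K\,g(K)$ with $g$ bounded. Hence $W_\eps E_K(\mySigma)$ is Hilbert--Schmidt. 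The standard theorem then yields, for spectrally a.e.\ $E>0$, a nonzero generalized eigenfunction $\varphi\in \ee^{\eps|x|}L^2$ with $K\varphi=E\varphi$. Taking $\eps=1/n$ and a common full-measure set, one gets a single $\varphi$ in all these spaces. This uses that the generalized eigenfunctions come from one spectral representation, so the construction for smaller $\eps$ refines the one for larger $\eps$.

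The weighted $L^2$ bound then has to be upgraded to the pointwise bound defining $\calB_0$. For this I would use the equation itself: $\varphi=E^{-1}K\varphi$. Applying Cauchy--Schwarz with the kernel bound gives
\[
|\varphi(x)|\le CE^{-1}\Bigl(\int \frac{\ee^{2\eps'|y|}\,\dd y}{\cosh^2((x-y)/2)}\Bigr)^{1/2}\norm{\ee^{-\eps'|\cdot|}\varphi}_{L^2}\le C'\ee^{\eps'|x|} .
\]
This holds for all $\eps'<1$, in particular for all small $\eps'$, so $f\in\calB_0$.

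\emph{Part (iii).} This is the step I expect to be the main obstacle, since it has no Wronskian available a priori. The route I would try first is the factorization $H_\mu=L^*L$, where $Lf(\lambda)=\int_0^\infty\ee^{-t\lambda}f(t)\,\dd t$ maps into $L^2(\mu)$. Any solution with $E>0$ satisfies $f=E^{-1}L^*g$ with $g=Lf$, and $g$ solves the symmetric equation
\[
\int \frac{g(\sigma)}{\lambda+\sigma}\,\dd\mu(\sigma)=Eg(\lambda)
\]
on $\operatorname{supp}\mu$. For two $\calB_0$ solutions $f_1,f_2$ I would build a ``boundary form''
\[
W_R(f_1,f_2)=\langle Kf_1,\1_{[-R,R]}f_2\rangle-\langle \1_{[-R,R]}f_1,Kf_2\rangle,
\]
which vanishes identically by the equation. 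Its splitting into contributions from the two ends $x\to\pm\infty$ should yield two antisymmetric pairings, the ``data at $0$'' and the ``data at $\infty$''.

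The goal would then be a uniqueness statement: a $\calB_0$ solution whose data at one end vanishes is determined, up to scaling, by its behaviour at the other end. This is the analogue of ``one solution satisfying a boundary condition'' in Proposition~\ref{prp:a00}(ii), and it would bound the dimension by $2$. Proving this uniqueness is where I expect the real difficulty; it presumably needs the semi-regular structure and the simplicity result of Proposition~\ref{prp:a0}(ii), applied to truncations of $\mu$ to $[0,a)$ and $[a,\infty)$.
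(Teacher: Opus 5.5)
\textbf{Part (iii) is not proved.} You acknowledge this is the hard step, and what you give is a programme rather than an argument; its ingredients also do not work as stated. The form $W_R(f_1,f_2)$ vanishes trivially. Because $K$ is nonlocal, it equals
\[
\iint k(x,y)\,\varphi_1(y)\overline{\varphi_2(x)}\bigl(\1_{[-R,R]}(x)-\1_{[-R,R]}(y)\bigr)\,\dd x\,\dd y,
\]
which depends on $\varphi_1,\varphi_2$ on whole neighbourhoods of $\pm R$. Nothing in your setup reduces this to two scalar ``boundary data''. Applying the simplicity result for semi-regular $\mu$ to truncations of $\mu$ does not help either: it concerns spectral multiplicity of different operators, and solutions of $H_\mu f=Ef$ do not solve the truncated equations.

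The missing idea is a finite-rank Lyapunov identity. The paper uses a continuous analogue of the shift, $\calS=I-A$ with $(Af)(t)=2\ee^{-t}\int_0^t\ee^{s}f(s)\,\dd s$. It satisfies $\calS^*\calS=I$, $\calS\calS^*=I-\jap{\cdot,e}e$ with $e(t)=\sqrt{2}\,\ee^{-t}$, and $\calS^*H=H\calS$. These give
\[
(\calS-\calS^*)H=-H(\calS-\calS^*)-\jap{\cdot,p}e+\jap{\cdot,e}p,\qquad p=\calS He,
\]
and the identity extends to $\calB_\eps$. If the $\calB_0$-solution space had dimension $>2$, pick $f\neq0$ in it with $\jap{f,e}=\jap{f,p}=0$. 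Then $u=(\calS^*-\calS)f\in\calB_0$ solves $Hu=-Eu$, so $u=0$ by Shnol's theorem and $H\ge0$. Finally $\calS^*-\calS$ is injective on $\calB_0$: differentiating $Af=A^*f$ twice gives $f'=0$. The functionals $\jap{\cdot,e}$ and $\jap{\cdot,p}$ are the actual ``two boundary data''.

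\textbf{Part (i) has a gap in the choice of scale.} With dyadic cutoffs $\chi(x/R)$ your only gain is $R^{-1}$, so you need $\norm{\varphi\1_{\{R\le\abs{x}\le 2R\}}}\le C\norm{\varphi_{R}}$ along some sequence $R_n\to\infty$. This does not follow from subexponential growth. The statement ``not $\ee^{cR}$ times larger for all large $R$'' only gives a ratio below $\ee^{cR}$ infinitely often, which $R^{-1}$ cannot absorb. The claim is in fact false in $\calB_0$. Take $\abs{\varphi(x)}\asymp\ee^{\sqrt{\abs{x}}}$, i.e.\ $f(t)=t^{-1/2}\ee^{\sqrt{\abs{\log t}}}$. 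Then $\norm{\varphi\1_{\{R\le\abs{x}\le2R\}}}/\norm{\varphi\1_{\{\abs{x}\le R\}}}$ is of order $\ee^{(\sqrt{2}-1)\sqrt{R}}$ for every $R$. Moreover $\norm{\varphi_R}$ sees the shell only through the flat tail of your $C^\infty$ cutoff, so it is smaller than the shell norm by more than any power of $R$. Dyadic Schnol arguments are tailored to polynomial growth in $x$.

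The paper instead conjugates by exponential weights. The operator $M_\alpha=W^\alpha HW^{-\alpha}$, with $w(t)=t/(t^2+1)=1/(2\cosh\log t)$, is bounded for $\abs{\alpha}<\frac12$ and norm-continuous in $\alpha$; this follows from a Schur test using $w(ts)>w(t)w(s)$. Since $W^\alpha f\in L^2$ is an eigenvector of $M_\alpha$ for every $\alpha>0$, letting $\alpha\to0$ gives $E\in\spec(H)$. In your variables this amounts to using $\ee^{-\delta\abs{x}}\varphi$, $\delta\to0$, as approximate eigenvectors, which handles all subexponential growth at once.

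Your part (ii) is essentially the paper's BGK argument, with two caveats. First, the exponential weights force an extra step: obtaining a single $\varphi$ in all $\ee^{\eps\abs{x}}L^2$. This is true, since the density operators for different $\eps$ are related by bounded conjugations, but you only gesture at it. The paper avoids the issue with $w_{\log}$, i.e.\ $(1+x^2)^{-1/2}$, which is already square integrable. Second, your pointwise bound needs $\eps'<\frac12$, not $\eps'<1$.
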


In the Schr\"odinger theory, the exact analogue of (i) holds with the class of polynomially bounded functions instead of ${\calB}_0$. This result is known as Shnol's theorem \cite{Shnol}, see also \cite[Section~C.4]{Simon} and \cite{Simon81}.

In (ii), ``$H$-spectrally'' means ``almost everywhere with respect to the spectral measure of $H$''. Again, in the Schr\"odinger theory, the exact analogue of (ii) holds for polynomially bounded functions (see \cite[Section~C.5]{Simon}). Simon dubs part (ii) the BGK theory, after the contributions of Berezanskii, Browder, G{\aa}rding, Gelfand and Kac. 

As already discussed, in the Schr\"odinger theory (iii) immediately follows from the fact that the Schr\"odinger equation is of order two. In the Hankel case it is far from trivial. Part (iii) is in agreement with Proposition~\ref{prp:a0}(i).

As in the Schr\"odinger case, parts (i) and (ii) of Theorem~\ref{thm:b5} immediately imply the following corollary (cf. \cite[Corollary~C.5.5]{Simon}). 
\begin{corollary}
The spectrum of $H_\mu^\perp$ is the closure of the set of $E>0$ where the Hankel equation \eqref{eq:b7} has a solution $f\in {\calB}_0$. 
\end{corollary}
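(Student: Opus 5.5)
The statement immediately above is the Corollary: the spectrum of $H_\mu^\perp$ coincides with the closure $\overline{\mathcal E}$ of the set $\mathcal E$ of those $E>0$ for which \eqref{eq:b7} has a solution $f\in\calB_0$. I will deduce it from parts (i) and (ii) of Theorem~\ref{thm:b5}, together with the standard description of the spectrum of a self-adjoint operator as the support of its spectral measure. The two inclusions are handled separately. The only subtlety is the point $E=0$, which is excluded from $\mathcal E$ by definition and has to be recovered through the closure.

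\emph{Inclusion $\overline{\mathcal E}\subset\spec(H_\mu^\perp)$.} Take $E\in\mathcal E$, so $E>0$. Theorem~\ref{thm:b5}(i) gives $E\in\spec(H_\mu)$. Decompose $H_\mu=0\oplus H_\mu^\perp$ with respect to $L^2(\bbR_+)=\Ker H_\mu\oplus(\Ker H_\mu)^\perp$. Then $\spec(H_\mu)\setminus\{0\}=\spec(H_\mu^\perp)\setminus\{0\}$, so $E\in\spec(H_\mu^\perp)$. The spectrum is closed, hence $\overline{\mathcal E}\subset\spec(H_\mu^\perp)$.

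\emph{Inclusion $\spec(H_\mu^\perp)\subset\overline{\mathcal E}$.} Let $P$ denote the spectral measure of $H_\mu$. By Theorem~\ref{thm:b5}(ii), the set $N=(0,\infty)\setminus\mathcal E$ satisfies $P(N)=0$. Now take $E_0\in\spec(H_\mu^\perp)$ and an arbitrary $\delta>0$, and set $I=(E_0-\delta,E_0+\delta)$. The spectral projection of $H_\mu^\perp$ onto $I$ is non-zero, because $E_0$ lies in its spectrum. Since $H_\mu^\perp$ has trivial kernel, its spectral measure gives no mass to $\{0\}$. The spectral measure of $H_\mu^\perp$ is the restriction of $P$ to $(\Ker H_\mu)^\perp$, so $P(I\cap(0,\infty))\neq0$. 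This forces $I\cap(0,\infty)\not\subset N$, so $I$ contains a point of $\mathcal E$. As $\delta$ was arbitrary, $E_0\in\overline{\mathcal E}$. This argument covers $E_0=0$ as well, whenever $0\in\spec(H_\mu^\perp)$.

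I do not expect any real obstacle, since all the analytic content is contained in Theorem~\ref{thm:b5}. The one point needing care is the bookkeeping for $E=0$ and for $\Ker H_\mu$. In particular, one must use that the restriction of $H_\mu$ to its kernel contributes only the atom at $0$, which is why the statement is formulated for $H_\mu^\perp$ rather than for $H_\mu$.
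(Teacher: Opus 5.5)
Your proof is correct and follows the same route as the paper. You get the inclusion of the solution set into the spectrum from Theorem~\ref{thm:b5}(i). For the reverse inclusion you combine Theorem~\ref{thm:b5}(ii) with the fact that the spectral projection of any neighbourhood of a spectral point is non-zero. Your handling of the point $E=0$ and of $\Ker H_\mu$ is more careful than the paper's one-line argument, which treats only $E>0$ explicitly.
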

\begin{proof}
Let $\mathcal E_{H_\mu}$ be the (projection-valued) spectral measure of $H_\mu$. If $E\in\spec(H_\mu)$, $E>0$, then $\mathcal E_{H_\mu}(E-a,E+a)\not=0$ for any $a>0$. Hence for $E'>0$ arbitrarily close to $E$ there are solutions $f\in {\calB}_0$ to the Hankel equation $H_\mu f=E'f$. 
\end{proof}

\begin{remark*}
In Theorem~\ref{thm:b5}(ii), in fact we prove a stronger statement, with solutions $f$ satisfying the logarithmic bound 
\[
\abs{f(t)}\leq C(1+\abs{\log t}) t^{-\frac12}.
\]
\end{remark*}

The proof of Theorem~\ref{thm:b5} is given in Sections~\ref{sec:d}--\ref{sec:f}.

\subsection{Semi-regular cases}
In Sections~\ref{sec:g} and \ref{sec:i} we study the finite and co-finite semi-regular cases. To keep this introduction concise, here we explain the nature of these results in general terms; precise formulations are provided at the beginning of the respective sections.

To set the scene, let us consider the Schr\"odinger operator $S_v$ on $[0,\infty)$ with the Dirichlet boundary condition at $x=0$. For every real number $E$, there exists a unique solution $\varphi_E$ of \eqref{eq:a8}, satisfying the boundary conditions 
\begin{align*}
\varphi_E(0)=0,\quad \varphi_E'(0)=1. 
\end{align*}
The fundamental result in the theory of the half-line spectral problem (see e.g. \cite[Chapter III]{Titchmarsh}) is that the integral transformation 
\[
f\mapsto \widetilde{f}(E)=\int_0^\infty \varphi_E(x)f(x)\dd x
\]
maps $L^2(\bbR_+)$ isometrically onto $L^2(\sigma)$ for some measure $\sigma$ on $\bbR$ ($\sigma$ is the \emph{spectral measure} of $S_v$), i.e. the \emph{Parseval relation} 
\[
\int_{-\infty}^\infty \abs{\widetilde{f}(E)}^2\dd\sigma(E)=\norm{f}^2
\]
holds. Moreover, this transformation \emph{diagonalises} $S_v$, i.e. 
\[
\widetilde{S_v f}(E)=E\widetilde{f}(E),
\quad
\text{$\sigma$-a.e. $E\in\bbR$.}
\]

In Section~\ref{sec:g} we prove an analogous result for Hankel operators in the \emph{finite semi-regular case}. Namely, for each $E>0$ we will define a function $\varphi_E$ on $\bbR_+$ such that
\begin{itemize}
\item
$\varphi_E(t)$ is regular at $t=0$ but may grow as $t\to\infty$;
\item
for each $E>0$, $\varphi_E$ is a \emph{weak solution} to the Hankel equation;
\item
for each $E>0$, if there is a solution $f\in {\calB}_0$ to $H_\mu f=Ef$, then $f$ is collinear to $\varphi_E$;
\item
the integral transformation 
\begin{equation}
f\mapsto \widetilde{f}(E)=\int_0^\infty \varphi_E(t)f(t)\dd t
\label{eq:a7}
\end{equation}
maps the closed range of $H_\mu$ isometrically onto $L^2(\sigma)$ with some finite measure $\sigma$ and diagonalises $H_\mu$. 
\end{itemize}
In the \emph{co-finite semi-regular} case in Section~\ref{sec:i} we define a solution $\theta_E$ that satisfies similar properties except that it is regular (in a suitable sense) at infinity rather than at zero. 

While the spectral problems on $(-\infty,0]$ and $[0,\infty)$ for the Schr\"odinger equation are trivially isomorphic by the reflection $x\mapsto -x$, we will see that there are some subtle differences between the finite semi-regular and co-finite semi-regular cases for the Hankel equation.

\subsection{Regular case}
In Section~\ref{sec:c} we study the regular case, which is perhaps the most interesting one from the standpoint of the underlying linear algebra. We outline the nature of these results here informally; for precise formulations, see the beginning of Section~\ref{sec:c}.

Let us start by considering the spectral problem for the Schr\"odinger equation on $[0,1]$ with the Dirichlet boundary conditions at both endpoints (i.e. the Sturm--Liouville problem). For every real number $E$, there are two solutions $\varphi_E$ and $\theta_E$ of the  Schr\"odinger equation \eqref{eq:a8}, satisfying the boundary conditions 
\begin{align*}
\varphi_E(0)=0,\quad  \varphi_E'(0)=1, 
\\
\theta_E(1)=0, \quad \theta_E'(1)=1. 
\end{align*}
If $E$ is an eigenvalue of $S_v$, then both $\varphi_E$ and $\theta_E$ are collinear with the corresponding eigenvector of the Schr\"odinger operator. If $E$ is not an eigenvalue, the solutions $\varphi_E$ and $\theta_E$ are linearly independent. The spectrum of $S_v$ can be described as a set of solutions
\begin{equation}
\text{ either of }\varphi_E(1)=0 \qquad \text{ or of } \theta_E(0)=0.
\label{eq:a7a}
\end{equation}

In Section~\ref{sec:c}, we will state and prove analogous results for Hankel operators in the regular case. That is, we will define two functions $\varphi_E$ and $\theta_E$ in $L^2(\bbR_+)$ such that 
\begin{itemize}
\item
both $\varphi_E$ and $\theta_E$ satisfy the Hankel equation up to a rank-one error term;
\item
the rank-one error term vanishes if and only if $E>0$ is an eigenvalue of $H$, and in this case $\varphi_E$ and $\theta_E$ are collinear with the corresponding eigenvector;
\item
the spectrum can be characterised in terms of rank-one conditions on $\varphi_E$ and $\theta_E$ in the spirit of \eqref{eq:a7a};
\item
the functions $\varphi_E$ and $\theta_E$ are linearly independent unless $E>0$ is an eigenvalue of $H$. 
\end{itemize}
The underlying linear algebra in the Hankel case is reminiscent of the Sturm-Liouville problem but has some refreshingly new aspects. In particular, the symmetrised Fredholm determinant
\[
\mySigma(E)=\frac{\det(I-\tfrac1E H)}{\det(I+\tfrac1E H)}
\]
plays an  important role in the argument.

\subsection{Commutation relations}
The fundamental reason behind the striking similarities between the theories of Hankel operators and Schr\"odinger operators remains a mystery (at least to the author). However, it is clear that the key role in the theory of Hankel operators is played by some rank-two and some rank-one commutation formulas known as \emph{Lyapunov equations} in linear systems theory, see e.g. \cite{Ober} or \cite[Chapter 12]{Peller}.
Namely, in the \emph{general case} we have 
\begin{equation}
B_2 H_\mu+H_\mu B_2=\text{ rank two operator }
\label{eq:a9}
\end{equation}
for some auxiliary operator $B_2$, while in each of the two \emph{semi-regular} cases (finite and co-finite), we have 
\begin{equation}
B_1 H_\mu+H_\mu B_1=\text{ rank one operator }
\label{eq:a9a}
\end{equation}
for some auxiliary operator $B_1$. In fact, the proof of Proposition~\ref{prp:a0} in \cite{MPT,PuTreil} is based precisely on these equations. In this paper, we use the Lyapunov equations (and similar proof strategy) to analyse the behaviour of solutions to the Hankel equation.

Specifically, in the context of this paper the rank-two equation \eqref{eq:a9} is realised as \eqref{eq:f10} and serves as a basis for the proof of Theorem~\ref{thm:b5}(iii). The rank-one equation \eqref{eq:a9a} appears in two variants, see \eqref{eq:c1a}, which underpin our analysis of the regular case. In the semi-regular cases, we use the construction of \cite{PuTreil} based on a related rank-one formula. 

To illustrate the utility of Lyapunov equations, let us explain how \eqref{eq:a9} is used in the proof of Theorem~\ref{thm:b5}(iii). Assume, to get a contradiction, that for some $E>0$ the subspace of solutions $f\in\calB_0$ to the Hankel equation $H_\mu f=Ef$ has dimension greater than two. Then there is a non-zero solution $f$ in this subspace such that the rank-two term in the right-hand side of \eqref{eq:a9} vanishes on $f$, which gives 
\[
H_\mu (B_2 f)=-E(B_2 f). 
\]
Thus $B_2f$ is an eigenvector corresponding to the negative eigenvalue $-E$. Since $H_\mu$ is a positive operator, it follows that $B_2 f=0$, which eventually forces $f=0$, yielding the required contradiction. 

\subsection{The structure of the paper}
Some general preliminary statements are given in Section~\ref{sec:b}.
In  Sections~\ref{sec:d}--\ref{sec:f} we prove Theorem~\ref{thm:b5}. 
The finite and co-finite semi-regular cases are discussed in Sections~\ref{sec:g} and \ref{sec:i}. The regular case is discussed in Section~\ref{sec:c}. 

The reader interested in the statements of our main results but not in the proofs may skip Sections~\ref{sec:b}--\ref{sec:f} and scan through the first parts of Sections~\ref{sec:g}--\ref{sec:c}.

\subsection{Acknowledgements} The author is grateful to Nikolai Filonov, Rupert Frank, Leonid Pastur and Alexander Sobolev for useful discussions.  

\section{Notation and preliminaries}\label{sec:b}

\subsection{Notation}
Throughout the paper, $\mu$ is a measure on $\bbR_+$ satisfying the Carleson condition \eqref{eq:Carleson}, $h=h_\mu$ is the Laplace transform \eqref{eq:a6} of $\mu$ and $H=H_\mu$ is the corresponding Hankel operator \eqref{eq:a4}. We skip the subscript $\mu$ if no confusion can arise.

We denote the inner product of $f$ and $g$ in $L^2(\bbR_+)$ by $\jap{f,g}$. Our inner product is linear in $f$ and anti-linear in $g$. Notation $\jap{\cdot,f}g$ stands for the rank-one operator $\psi\mapsto\jap{\psi,f}g$. 

We will consider the Hankel equation \eqref{eq:b7} for measurable functions $f$ satisfying 
\begin{equation}
\int_0^\infty \abs{f(t)}(t+1)^{-1}\dd t<\infty.
\label{eq:b7a}
\end{equation}
In this case the integral in \eqref{eq:b7} converges absolutely by the estimate \eqref{eq:h-estimate} below. We note in particular that any function $f\in {\calB}_\eps$ with $\eps\in(0,\frac12)$ satisfies \eqref{eq:b7a}. 

For a self-adjoint operator $A$ and for a Borel set $\Delta\subset\bbR$, we denote by ${\mathcal E}_A(\Delta)$ the spectral projection of $A$ corresponding to $\Delta$.

 Let $C^\infty_{\comp}(\bbR_+)$ be the set of infinitely smooth compactly supported functions $f$ on $\bbR_+$; in particular, the support of each $f\in C^\infty_{\comp}(\bbR_+)$ is separated away from the origin. Let $C^\infty_{\comp,0}(\bbR_+)\subset C^\infty_{\comp}(\bbR_+)$ be the subset of functions $f$ with \emph{zero average}, i.e. 
\begin{equation}
\int_0^\infty f(t)\dd t=0.
\label{eq:g2a}
\end{equation}
Equivalently, 
\[
C^\infty_{\comp,0}(\bbR_+)=\{g': g\in C^\infty_{\comp}(\bbR_+)\}.
\]
It is easy to see that $C^\infty_{\comp,0}(\bbR_+)$ is dense in $L^2(\bbR_+)$.

\subsection{The Laplace transform and factorisation of $H_\mu$}
We denote by $L_\mu$ the Laplace transform, considered as an operator from $L^2(\bbR_+)$ to $L^2(\mu):=L^2(\bbR_+,\dd\mu)$:
\begin{equation}
L_\mu: L^2(\bbR_+)\to L^2(\mu), \quad (L_\mu f)(x)=\int_{0}^{\infty}\ee^{-tx}f(t)\dd t, \quad x>0.
\label{eq:g2}
\end{equation}
The boundedness of $L_\mu$ follows from (in fact, equivalent to) the Carleson condition \eqref{eq:Carleson} on $\mu$, see Proposition~\ref{prp:a1} below. We also use the adjoint
\[
L_\mu^*: L^2(\mu)\to L^2(\bbR_+), \quad (L_\mu^{*}f)(t)=\int_{0}^{\infty}\ee^{-t\lambda}f(\lambda)\dd\mu(\lambda), \quad t>0.
\]
We have the important factorisation
\begin{equation}
\boxed{H_\mu=L_\mu^*L_\mu}
\label{eq:g3a}
\end{equation}
see \cite[Proposition~1.3]{PuTreil}.
In particular, $\Ker H_\mu=\Ker L_\mu$ and $\overline{\Ran H_\mu}=\overline{\Ran L_\mu^*}$.

\subsection{The boundedness of $H_\mu$}
The statement below can be regarded as folkore; see e.g.  \cite[Proposition~A.1]{PuTreil0}.
\begin{proposition}\label{prp:a1}
For a measure $\mu$ on $\bbR_+$, the following are equivalent:
\begin{enumerate}[\rm (i)]
\item
$\mu$ is Carleson \eqref{eq:Carleson};
\item
$h_\mu$ satisfies 
\begin{equation}
\abs{h_\mu(t)}\leq C_h/t, \quad t>0,
\label{eq:h-estimate}
\end{equation}
with some constant $C_h>0$;
\item
$H_\mu$ is bounded on $L^2(\bbR_+)$;
\item
$L_\mu: L^2(\bbR_+)\to L^2(\mu)$ is bounded. 
\end{enumerate}
\end{proposition}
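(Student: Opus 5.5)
We will prove the cycle (i)$\Rightarrow$(ii)$\Rightarrow$(iii)$\Rightarrow$(iv)$\Rightarrow$(i). The central object is the Laplace transform $L_\mu$ and the factorisation \eqref{eq:g3a}. Throughout, $h_\mu(t)\ge0$, since $\mu$ is a positive measure.

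\emph{(i)$\Rightarrow$(ii).} We write $h_\mu(t)=\int_0^\infty \ee^{-t\lambda}\dd\mu(\lambda)$ and use the layer-cake form: since $\ee^{-t\lambda}=\int_\lambda^\infty t\ee^{-ta}\dd a$, Fubini gives
\[
h_\mu(t)=\int_0^\infty t\ee^{-ta}\mu([0,a))\dd a\le C_\mu\int_0^\infty ta\ee^{-ta}\dd a=C_\mu/t.
\]

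\emph{(ii)$\Rightarrow$(iii).} This is the classical Hilbert/Carleman bound. Since $0\le h_\mu(t+s)\le C_h/(t+s)$, we have $\abs{Hf}\le C_h\,\mathcal C\abs{f}$ pointwise, where $\mathcal C$ is the Carleman operator. Its norm is $\pi$ (the first Example), which can also be checked directly by Schur's test with weight $t^{-1/2}$. Hence $\norm{H_\mu}\le\pi C_h$. The integral converges absolutely for $f\in L^2$, so $H_\mu$ is defined pointwise.

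\emph{(iii)$\Rightarrow$(iv).} For $f\in C^\infty_\comp(\bbR_+)$, Fubini (all quantities are finite because $f$ is supported away from $0$ and $\infty$) gives
\[
\norm{L_\mu f}_{L^2(\mu)}^2=\int\!\!\int \overline{f(s)}\,h_\mu(t+s)f(t)\dd t\dd s=\jap{H_\mu f,f}\le\norm{H_\mu}\norm f^2.
\]
Density then extends $L_\mu$ to a bounded operator. A Fatou argument identifies this extension with the pointwise integral whenever the latter converges. This is also the proof of \eqref{eq:g3a}.

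\emph{(iv)$\Rightarrow$(i).} We test on $f=\1_{[0,1/a]}$, for which $\norm f^2=1/a$ and
\[
(L_\mu f)(x)=\frac{1-\ee^{-x/a}}{x}\ge\frac{1-\ee^{-1}}{a}\quad\text{for } x<a,
\]
because $(1-\ee^{-y})/y$ is decreasing in $y$. Therefore
\[
(1-\ee^{-1})^2a^{-2}\mu([0,a))\le\norm{L_\mu}^2a^{-1},
\]
which is the Carleson condition \eqref{eq:Carleson}.

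The only step with any subtlety is (iii)$\Rightarrow$(iv). There we must be careful that (iii) presupposes the operator is a priori defined, which we take on a dense set, and that positivity of the kernel justifies the Fubini interchange. Everything else is a direct computation.
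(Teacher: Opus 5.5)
Your proof is correct. The paper proves only (i)$\Rightarrow$(ii), by the same integration-by-parts computation \eqref{eq:b7c} that you write in layer-cake form, and defers the remaining equivalences to \cite[Proposition~A.1]{PuTreil0}. Your cycle supplies a self-contained, standard replacement for that citation: the comparison with the Carleman operator (Schur test with weight $t^{-1/2}$), the identity $\norm{L_\mu f}_{L^2(\mu)}^2=\jap{H_\mu f,f}$ on $C^\infty_{\comp}(\bbR_+)$, and the test function $\1_{[0,1/a]}$. As a by-product it also gives the factorisation \eqref{eq:g3a}.

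One small precision in (iii)$\Rightarrow$(iv). The Fubini interchange is justified not merely because $f$ is supported in some $[\delta,R]$, but because then $h_\mu(t+s)\le h_\mu(2\delta)$ on the support. This bound is finite because (iii) presupposes that the integral defining $H_\mu$ converges on such $f$. Since $h_\mu$ is monotone, that forces $h_\mu<\infty$ on all of $(0,\infty)$.
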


In fact, the implication (i)$\Rightarrow$(ii) is a one-line argument \eqref{eq:b7c} used in the next proposition.

\subsection{Smoothness and estimates of $h_\mu$ and of solutions to the Hankel equation}

The following propositions are quite elementary and probably well-known. 

\begin{proposition}\label{thm:b4}
Let $\mu$ be a Carleson measure and let $h=h_\mu$, $H=H_\mu$. 
\begin{enumerate}[\rm (i)]
\item
The kernel function $h(t)$ extends to an analytic function in $\Re t>0$ and satisfies the estimates 
\begin{equation}
h(t)\leq \frac{C_\mu}{t}, \quad
\abs{h'(t)}\leq \frac{2C_\mu}{t^2}, \quad t>0. 
\label{eq:b7b}
\end{equation}
\item
Suppose $f$ satisfies \eqref{eq:b7a} and $f$ is a solution to the Hankel equation for some $E\not=0$. 
Then $f\in C^\infty(\bbR_+)$; moreover, $f(t)$ extends to an analytic function in the right half-plane $\Re t>0$ and satisfies the estimates
\begin{equation}
f(t)=o(1), \quad t\to\infty,
\qquad
f(t)=o(t^{-1}), \quad t\to0_+.
\label{eq:b7d}
\end{equation}
\end{enumerate}
\end{proposition}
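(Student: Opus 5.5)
For part (i), I will work directly from the Laplace representation \eqref{eq:a6}. The function $h(t)=\int_0^\infty \ee^{-t\lambda}\dd\mu(\lambda)$ makes sense for complex $t$ with $\Re t>0$, since $\abs{\ee^{-t\lambda}}=\ee^{-\lambda\Re t}$ and the real-variable integral converges. Analyticity then follows by differentiating under the integral sign, or via Morera and Fubini, dominating locally uniformly on compact subsets of the half-plane by $\int \ee^{-\lambda\delta}\dd\mu(\lambda)<\infty$. For the bounds I will write
\[
\ee^{-t\lambda}=\int_\lambda^\infty t\ee^{-ta}\dd a
\]
and apply Fubini:
\[
h(t)=\int_0^\infty t\ee^{-ta}\mu([0,a))\dd a\le C_\mu\int_0^\infty ta\ee^{-ta}\dd a=C_\mu/t .
\]
This is the one-line argument \eqref{eq:b7c}. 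For the derivative I have $\abs{h'(t)}=\int\lambda \ee^{-t\lambda}\dd\mu$. The same layer-cake formula with $-\frac{d}{da}(a\ee^{-ta})=(ta-1)\ee^{-ta}\le ta\,\ee^{-ta}$ gives
\[
\abs{h'(t)}\le C_\mu\int_0^\infty a(ta)\ee^{-ta}\dd a=2C_\mu/t^2 .
\]
Equivalently, I can bound $\lambda\ee^{-t\lambda/2}\le 2/(et)$ and use $h(t/2)$; either route gives a bound of the required form, and the layer-cake one gives exactly the constant $2$.

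For part (ii), I will write $f(t)=E^{-1}\int_0^\infty h(t+s)f(s)\dd s$ and define the right-hand side for complex $t$ with $\Re t>0$. The key pointwise bound is $\abs{h(t+s)}\le h(\Re t+s)\le C_\mu/(\Re t+s)$. On the compact set where $\Re t\ge\delta$ this is at most $C\,(1+s)^{-1}$ with $C$ depending on $\delta$. Together with \eqref{eq:b7a}, this gives a locally uniform integrable majorant, so the integral is analytic in $t$. Smoothness on $\bbR_+$ follows as a special case, and the pointwise identity lets me take $f$ to be its analytic representative.

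For the decay \eqref{eq:b7d} I will use dominated convergence. As $t\to\infty$, $\abs{h(t+s)f(s)}\le C_\mu\abs{f(s)}/(1+s)$ for $t\ge1$, and $h(t+s)\le C_\mu/(t+s)\to0$ pointwise, so $f(t)=o(1)$. As $t\to0_+$, I multiply by $t$. The quantity $t\,h(t+s)\le C_\mu t/(t+s)$ is bounded by $C_\mu$ and tends to $0$ for each fixed $s>0$. This handles the portion $s\in(0,1)$, where $\abs{f}$ is integrable by \eqref{eq:b7a}. For $s>1$ I use $t/(t+s)\le t/(1+s)$ when $t\le 1$, which again gives domination and convergence to $0$. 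Hence $tf(t)\to0$.

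I expect no serious obstacle here. The only point needing care is producing the uniform majorant near $s=0$ in the $t\to0$ limit, and the bound $t/(t+s)\le1$ handles it.
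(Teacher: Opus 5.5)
Your argument is essentially the paper's: (i) is the same integration by parts (your layer-cake/Fubini computation, dropping the negative term for $h'$), and (ii) is the same dominated convergence using $h(t+s)\le C_\mu/(t+s)$ against $\abs{f(s)}/(1+s)$. The only difference is that you get analyticity directly from $\abs{h(t+s)}\le h(\Re t+s)$, whereas the paper uses the representation $E^{-1}\int_0^\infty \ee^{-\lambda t}(L_\mu f)(\lambda)\,\dd\mu(\lambda)$; both work.

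One slip needs fixing. For $t\le 1$ the inequality $t/(t+s)\le t/(1+s)$ goes the wrong way; it holds only for $t\ge 1$. For $s\ge1$ use $t/(t+s)\le t/s\le 2t/(1+s)$ instead. Alternatively, use the paper's bound $t(1+s)/(t+s)\le 1$ for $t\le 1$, which dominates all $s>0$ at once.
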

\begin{proof}
(i) Analyticity of $h(t)$ in $\Re t>0$ is clear from the definition \eqref{eq:a6}. 
To prove the first estimate in  \eqref{eq:b7b}, we integrate by parts:
\begin{equation}
h(t)=\int_0^\infty \ee^{-t\lambda}\dd\mu(\lambda)
=t\int_0^\infty \ee^{-t\lambda}\mu(0,\lambda)\dd\lambda
\leq 
C_\mu t \int_0^\infty \ee^{-t\lambda}\lambda\dd\lambda
=
C_\mu/t,
\label{eq:b7c}
\end{equation}
and similarly for the second estimate
\begin{align}
-h'(t)&=\int_0^\infty \ee^{-t\lambda}\lambda \dd\mu(\lambda)
=t\int_0^\infty \ee^{-t\lambda}\lambda\mu(0,\lambda)\dd\lambda
-\int_0^\infty \ee^{-t\lambda}\mu(0,\lambda)\dd\lambda
\notag
\\
&\leq 
C_\mu t\int_0^\infty \ee^{-t\lambda}\lambda^2\dd\lambda
=
2C_\mu/t^2.
\label{eq:b7dd}
\end{align}

(ii) We have 
\[
f(t)=\frac1E\int_0^\infty h(t+s)f(s)\dd s=\frac1E\int_0^\infty \ee^{-\lambda t}(L_\mu f)(\lambda)\dd\mu(\lambda), \quad t>0,
\]
where $L_\mu f$ is the Laplace transform of $f$ \eqref{eq:g2}. From the second representation the analyticity of $f(t)$ is evident. From the first representation we find
\[
\abs{f(t)}\leq \frac{C_\mu}{\abs{E}}\int_0^\infty \frac{\abs{f(s)}}{s+t}\dd s
=
\frac{C_\mu}{\abs{E}}\int_0^\infty \frac{s+1}{s+t}\frac{\abs{f(s)}}{s+1}\dd s,
\]
and \eqref{eq:b7d} follows by dominated convergence. 
\end{proof}

\begin{proposition}\label{prp:b5}
\begin{enumerate}[\rm (i)]
\item
Suppose $\mu$ is finite. Then the kernel function satisfies
\begin{equation}
h(t)\leq h(0)= \mu(\bbR_+)
\quad\text{ and }\quad
\abs{h'(t)}\leq \frac{\mu(\bbR_+)}{t}.
\label{eq:b1}
\end{equation}
\item
Suppose $\mu$ is co-finite. Then the kernel function satisfies
\begin{equation}
h(t)\leq \frac{2}{t^2}\int_0^\infty \frac{\dd\mu(\lambda)}{\lambda^2}
\quad\text{ and }\quad
\abs{h'(t)}\leq \frac{6}{t^3}\int_0^\infty \frac{\dd\mu(\lambda)}{\lambda^2}.
\label{eq:b1a}
\end{equation}
\end{enumerate}
\end{proposition}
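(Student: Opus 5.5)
Both parts are elementary pointwise bounds on the Laplace transform $h(t)=\int_0^\infty \ee^{-t\lambda}\dd\mu(\lambda)$ and its derivative $-h'(t)=\int_0^\infty \lambda\ee^{-t\lambda}\dd\mu(\lambda)$. Differentiation under the integral sign is justified for $t>0$ as in the proof of Proposition~\ref{thm:b4}. Each bound will come from estimating the weights $\ee^{-t\lambda}$ and $\lambda\ee^{-t\lambda}$ against either the constant $1$ (finite case) or $\lambda^{-2}$ (co-finite case). The only tool is the scalar inequality
\[
\sup_{x>0} x^k\ee^{-x}=(k/\ee)^k\le k!, \qquad k=1,2,3.
\]

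For part (i), with $\mu$ finite, I would use $\ee^{-t\lambda}\le 1$. This gives $h(t)\le \mu(\bbR_+)$. Monotone convergence as $t\to0_+$ shows that $h$ extends continuously to $t=0$ with $h(0)=\mu(\bbR_+)$; the paper writes $h(0)$ for this value. For the derivative I would write
\[
\lambda\ee^{-t\lambda}=\frac1t\,(t\lambda)\ee^{-t\lambda}\le \frac{1}{\ee t}\le \frac1t,
\]
and integrating against $\mu$ gives $\abs{h'(t)}\le \mu(\bbR_+)/t$.

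For part (ii), with $\mu$ co-finite, I would multiply and divide by $\lambda^2$. For $h$,
\[
\ee^{-t\lambda}=\frac{1}{t^2\lambda^2}\,(t\lambda)^2\ee^{-t\lambda}\le \frac{4\ee^{-2}}{t^2\lambda^2}\le \frac{2}{t^2\lambda^2}.
\]
For $h'$,
\[
\lambda\ee^{-t\lambda}=\frac{1}{t^3\lambda^2}\,(t\lambda)^3\ee^{-t\lambda}\le \frac{27\ee^{-3}}{t^3\lambda^2}\le \frac{6}{t^3\lambda^2}.
\]
Integrating each inequality against $\dd\mu(\lambda)$ gives \eqref{eq:b1a}. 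The constants $2$ and $6$ are exactly $2!$ and $3!$, which suggests the author intended the crude bound $x^k\ee^{-x}\le k!$. That bound follows directly from $\ee^{x}\ge x^k/k!$.

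There is no real obstacle here. The only points needing a little care are justifying the differentiation under the integral sign and interpreting $h(0)$ as the limit $h(0_+)$.
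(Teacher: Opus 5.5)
Your proof is correct, but it follows a different route from the paper. The paper does not bound the integrand pointwise. It reuses the formulas \eqref{eq:b7c} and \eqref{eq:b7dd}, which rewrite $h(t)$ and $-h'(t)$ in terms of the distribution function $\mu(0,\lambda)$; for $-h'$ it also drops the non-positive term $-\int_0^\infty \ee^{-t\lambda}\mu(0,\lambda)\dd\lambda$. It then inserts $\mu(0,\lambda)\le\mu(\bbR_+)$ in the finite case, or $\mu(0,a)\le a^2\int_0^\infty\lambda^{-2}\dd\mu(\lambda)$ in the co-finite case. So the constants $1$, $2$, $6$ there come from the Gamma integrals $\int_0^\infty x^k\ee^{-x}\dd x=k!$, not from the pointwise bound $x^k\ee^{-x}\le k!$ you guessed at.

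Your argument is shorter, needs no integration by parts, and gives sharper constants ($1/\ee$, $4/\ee^{2}$, $27/\ee^{3}$ in place of $1$, $2$, $6$). The paper's version runs parallel to the proof of the Carleson bound \eqref{eq:b7c}, where the hypothesis on $\mu$ enters only through the growth of $\mu(0,a)$. As a result, in part (ii) it shows that the weaker condition $\sup_{a>0}a^{-2}\mu(0,a)<\infty$ already suffices, whereas your pointwise method needs the full integral $\int_0^\infty\lambda^{-2}\dd\mu(\lambda)$.
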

\begin{proof}
(i) The first relation \eqref{eq:b1} is evident. To prove the second relation, we use \eqref{eq:b7dd}:
\[
\abs{h'(t)}
\leq 
t\int_0^\infty \lambda \ee^{-t\lambda}\mu(0,\lambda)\dd\lambda
\leq 
t\mu(\bbR_+)
\int_0^\infty \lambda \ee^{-t\lambda}\dd\lambda
=
\frac{\mu(\bbR_+)}{t}.
\]
(ii) We estimate
\[
\mu(0,a)\leq a^2\int_0^a\frac{\dd\mu(\lambda)}{\lambda^2}\leq a^2 \int_0^\infty\frac{\dd\mu(\lambda)}{\lambda^2}
\]
and then use \eqref{eq:b7c}:
\begin{align*}
h(t)&=
t\int_0^\infty \ee^{-t\lambda}\mu(0,\lambda)\dd\lambda
\leq
t \int_0^\infty\frac{\dd\mu(\lambda)}{\lambda^2} 
\int_0^\infty \ee^{-t\lambda}\lambda^2\dd\lambda
=
\frac{2}{t^2}\int_0^\infty \frac{\dd\mu(\lambda)}{\lambda^2}.
\end{align*}
Similarly, using \eqref{eq:b7dd}
\[
\abs{h'(t)}
\leq 
t\int_0^\infty  \ee^{-t\lambda}\lambda\mu(0,\lambda)\dd\lambda
\leq
t\int_0^\infty\frac{\dd\mu(\lambda)}{\lambda^2} 
\int_0^\infty  \ee^{-t\lambda}\lambda^3\dd\lambda
=
\frac{6}{t^3}\int_0^\infty \frac{\dd\mu(\lambda)}{\lambda^2}. 
\]
\end{proof}
If $\mu$ is co-finite, we denote the ``integrated kernel'' by 
\[
h^{\rm int}(t):=\int_{t}^\infty h(s)\dd s=\int_0^\infty \ee^{-t\lambda}\frac{\dd\mu(\lambda)}{\lambda},
\]
where by \eqref{eq:b1a} 
\begin{equation}
h^{\rm int}(t)\leq \frac{2}{t} \int_0^\infty\frac{\dd\mu(\lambda)}{\lambda^2}. 
\label{eq:b1b}
\end{equation}

\section{Proof of Theorem~\ref{thm:b5}(i) (Shnol's theorem)}\label{sec:d}

\subsection{The main argument}
Let $W$ be the operator of multiplication by the weight function 
\[
w(t)=\frac{t}{t^2+1}, \quad  t>0
\]
in $L^2(\bbR_+)$. 
For a bounded positive Hankel operator $H=H_\mu$ and $\alpha\in(-1/2,1/2)$, we will consider the operator 
\[
M_\alpha=W^\alpha H W^{-\alpha}
\]
in $L^2(\bbR)$. This operator is well-defined on the dense space $C_{\comp}^\infty(\bbR_+)$. 
Indeed, for any $f\in C_{\comp}^\infty(\bbR_+)$, by \eqref{eq:h-estimate}, the function $HW^{-\alpha} f$ satisfies the estimate 
\[
\abs{(HW^{-\alpha} f)(t)}\leq C/(t+1), \quad t>0,
\]
and so the function $W^\alpha H W^{-\alpha}f$ is in $L^2(\bbR_+)$ for any $\alpha\in(-1/2,1/2)$. 

\begin{lemma}\label{lma:d1}
For any bounded positive Hankel operator $H$ and $\alpha\in(-1/2,1/2)$, the operator $M_\alpha$ extends to $L^2(\bbR)$ as a bounded operator. Moreover, $M_\alpha$ is continuous in $\alpha\in(-1/2,1/2)$ in the operator norm. 
\end{lemma}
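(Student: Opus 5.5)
We will reduce the problem to a pointwise kernel bound. By \eqref{eq:h-estimate}, $0\le h(t+s)\le C_h/(t+s)$. Hence the integral kernel of $M_\alpha$,
\[
m_\alpha(t,s)=w(t)^\alpha h(t+s)w(s)^{-\alpha},
\]
is dominated in absolute value by the positive kernel
\[
C_h\,\frac{w(t)^\alpha w(s)^{-\alpha}}{t+s}.
\]
It is therefore enough to show that this majorant defines a bounded operator on $L^2(\bbR_+)$. Boundedness then transfers to $M_\alpha$, since an integral operator is bounded whenever its kernel is dominated by that of a bounded positive-kernel operator.

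The key steps are as follows.

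First, note that $w(t)\asymp \min(t,t^{-1})$: for $t\le1$ we have $t/2\le w(t)\le t$, and for $t\ge1$ we have $1/(2t)\le w(t)\le 1/t$. So it suffices to bound the kernel
\[
K_\alpha(t,s)=\frac{\min(t,1/t)^\alpha\,\min(s,1/s)^{-\alpha}}{t+s}.
\]

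Second, split $\bbR_+^2$ into the four regions determined by $t\lessgtr1$ and $s\lessgtr1$.
\begin{itemize}
\item On $\{t,s<1\}$ the kernel is $(t/s)^\alpha/(t+s)$. This is homogeneous of degree $-1$, and it is bounded on $L^2$ by the Schur test with test function $t^{-1/2}$, because $\int_0^\infty x^{\alpha-1/2}(1+x)^{-1}\dd x<\infty$ for $\abs{\alpha}<1/2$. Equivalently, one can use Mellin/Hardy-type boundedness of homogeneous kernels.
\item On $\{t,s>1\}$ the kernel is $(s/t)^\alpha/(t+s)$, and the same argument applies.
\item On the mixed region $\{t<1<s\}$ the kernel is $(ts)^\alpha/(t+s)$. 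Here we bound $t+s\ge s$ and $t+s\ge 2\sqrt{ts}$ and then check the Hilbert--Schmidt norm, or again apply Schur with $t^{-1/2}$. We expect the Schur integrals to reduce to $\int x^{\pm\alpha-1/2}(1+x)^{-1}\dd x$, with the restriction to $t<1<s$ only helping. Concretely, $(ts)^\alpha\le (t/s)^{\abs{\alpha}}$ or $(s/t)^{\abs{\alpha}}$ is not immediate, so for safety we will split according to the sign of $\alpha$: for $\alpha\ge0$, use $(ts)^\alpha\le (s/t)^{\alpha}\cdot t^{2\alpha}\le (s/t)^\alpha$ since $t<1$; for $\alpha<0$, use $(ts)^{\alpha}=(t/s)^{\alpha}s^{2\alpha}\le (t/s)^{\alpha}$ since $s>1$. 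In each case this reduces to a homogeneous kernel handled as above.
\item The region $\{s<1<t\}$ is symmetric.
\end{itemize}
This gives $\norm{M_\alpha}\le C(\alpha)$, uniformly for $\alpha$ in compact subsets of $(-1/2,1/2)$.

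Third, for norm continuity, fix $\alpha_0$ and write
\[
M_\alpha-M_{\alpha_0}=W^{\alpha_0}\bigl(W^{\alpha-\alpha_0}HW^{-(\alpha-\alpha_0)}-H\bigr)W^{-\alpha_0}.
\]
More directly, the kernel difference is
\[
m_{\alpha_0}(t,s)\bigl[(w(t)/w(s))^{\alpha-\alpha_0}-1\bigr].
\]
Let $\delta=\alpha-\alpha_0$ and $\rho=\abs{\log(w(t)/w(s))}$. We use $\abs{\ee^{\delta\log r}-1}\le \abs{\delta}\,\rho\,\ee^{\abs{\delta}\rho}$, together with $\rho\ee^{\abs\delta\rho}\le C_\eta \ee^{\eta\rho}$ for small $\abs\delta<\eta/2$, where $\eta>0$ is chosen with $\abs{\alpha_0}+\eta<1/2$. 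This dominates the difference kernel by $C\abs{\delta}$ times the sum of the majorants for exponents $\alpha_0\pm\eta$, whose norms are bounded by the second step. Hence $\norm{M_\alpha-M_{\alpha_0}}\le C\abs{\alpha-\alpha_0}$.

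The main obstacle is the mixed region in the second step, where $w$ switches behaviour. We must make sure the Schur test closes for all $\abs\alpha<1/2$ and not merely for small $\alpha$. The sign-splitting trick above is what we will try first.
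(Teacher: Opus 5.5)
Your proof is correct, and it takes a somewhat different route from the paper's. The paper does not split into regions. It bounds the quadratic form by one Cauchy--Schwarz with the power weight $(t/s)^{\beta/2}$ and rescales $s\mapsto ts$ to use the dilation invariance of $1/(t+s)$. It then controls the ratio of weights uniformly in $t$ via the multiplicative inequality $w(ts)>w(t)w(s)$. Norm continuity comes from dominated convergence in $\eps$, using $\abs{(w(ts)/w(s))^\eps-1}\le\abs{w(t)^\eps-w(t)^{-\eps}}$.

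You instead compare $w$ with $\min(t,1/t)$. Your four-region analysis amounts to the pointwise bound $K_\alpha(t,s)\le\bigl((t/s)^\alpha+(s/t)^\alpha\bigr)/(t+s)$, so everything reduces to two homogeneous kernels and the Schur test with $t^{-1/2}$. For continuity, your bound $\abs{r^\delta-1}\le C_\eta\abs{\delta}(r^\eta+r^{-\eta})$ recycles the boundedness at the exponents $\alpha_0\pm\eta$. It yields an explicit Lipschitz estimate rather than bare continuity.

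Your route also gives the full range of $\alpha$. In the paper's argument the whole factor $(w(t)/w(s))^{\alpha}$ is placed on one side of Cauchy--Schwarz. The constant $\int_0^\infty w(s)^{-2\alpha}s^{-\beta}(1+s)^{-1}\,\dd s$ is then finite only when $2\alpha<\beta<1-2\alpha$, which is possible only for $\abs{\alpha}<1/4$. This is harmless for Shnol's theorem, where only $\alpha\to0$ matters, whereas your Schur test with $t^{-1/2}$ closes for every $\abs{\alpha}<1/2$.
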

We postpone the routine technical proof of this lemma to the end of this section. Here, assuming the lemma, we give the proof of ``Shnol's theorem'' for Hankel operators. 

\begin{proof}[Proof of Theorem~\ref{thm:b5}(i) (Shnol's theorem)]
Suppose that $Hf=Ef$ with $f\in {\calB}_0$. The assumption $f\in {\calB}_0$ implies that for any $\alpha>0$, we have $g=W^\alpha f\in L^2(\bbR_+)$. Multiplying $Hf=Ef$ by $W^{\alpha}$, we find 
\[
W^\alpha H W^{-\alpha}g=Eg, \quad g=W^\alpha f \in L^2(\bbR_+). 
\]
This means that $E$ is an eigenvalue of $W^\alpha H W^{-\alpha}$ for all $\alpha>0$. By Lemma~\ref{lma:d1} (the norm continuity of $W^\alpha H W^{-\alpha}$ at $\alpha=0$) it follows that $E$ is in the spectrum of $H$. The proof of Theorem~\ref{thm:b5}(i) is complete. 
\end{proof}

\subsection{Estimates for the weight  $w$}
In the rest of this section we prove Lemma~\ref{lma:d1}. We start with some elementary estimates for the weight $w$. Observe that 
\begin{equation}
w(t)=\frac{t}{t^2+1}=\frac1{t+t^{-1}}=\frac{1}{2\cosh x}=\frac12\sech x, 
\quad x=\log t.
\label{eq:d1a}
\end{equation}
\begin{lemma}\label{lma:d2}
For any $\eps\in\bbR$ and any $s,t>0$, we have
\begin{align}
w(ts)&>w(t)w(s), 
\label{eq:d8}
\\
\abs{(w(ts)/w(s))^\eps-1}
&\leq 
\abs{w(t)^{\eps}-w(t)^{-\eps}}. 
\label{eq:d10}
\end{align}
\end{lemma}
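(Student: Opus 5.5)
Both inequalities reduce to elementary facts about $\cosh$ once we pass to logarithmic variables via \eqref{eq:d1a}. Put $x=\log t$, $y=\log s$, so that $w(t)=\frac12\sech x$, $w(s)=\frac12\sech y$ and $w(ts)=\frac12\sech(x+y)$.

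For \eqref{eq:d8}, the claimed inequality reads $\frac12\sech(x+y)>\frac14\sech x\,\sech y$. This is equivalent to
\[
2\cosh x\cosh y>\cosh(x+y).
\]
By the addition formula, $\cosh(x+y)=\cosh x\cosh y+\sinh x\sinh y$, so the inequality becomes $\cosh x\cosh y>\sinh x\sinh y$. This holds because $\cosh x\cosh y\ge\abs{\sinh x}\abs{\sinh y}$ for each factor separately, with strict inequality since $\cosh>\abs{\sinh}$.

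For \eqref{eq:d10}, set $r=w(ts)/w(s)=\cosh y/\cosh(x+y)$. I would first show the two-sided bound
\[
\ee^{-\abs{x}}\le r\le \ee^{\abs{x}}.
\]
This follows from $\cosh(x+y)\le \ee^{\abs{x}}\cosh y$, which is immediate from $\ee^{\pm(x+y)}\le \ee^{\abs{x}}\ee^{\pm y}$, and symmetrically from $\cosh y\le \ee^{\abs{x}}\cosh(x+y)$.

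Consequently $r^\eps$ lies between $\ee^{-\abs{\eps x}}$ and $\ee^{\abs{\eps x}}$. Hence
\[
\abs{r^\eps-1}\le \ee^{\abs{\eps x}}-1 .
\]
Here I use that $\ee^{a}-1\ge 1-\ee^{-a}$ for $a\ge0$, so the worse of the two endpoints is the upper one.

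It then remains to compare this with the right-hand side of \eqref{eq:d10}. Set $u=w(t)^{-1}=2\cosh x$. Then
\[
\abs{w(t)^\eps-w(t)^{-\eps}}=\abs{u^{\eps}-u^{-\eps}}=2\sinh(\abs{\eps}\log u).
\]
Since $u\ge 2\cosh x\ge \ee^{\abs{x}}$, we have $\log u\ge\abs{x}$. With $a=\abs{\eps x}$ this gives
\[
2\sinh(\abs{\eps}\log u)\ge 2\sinh a=\ee^{a}-\ee^{-a}\ge \ee^{a}-1,
\]
using $\ee^{-a}\le1$. Combining this with the bound on $\abs{r^\eps-1}$ yields \eqref{eq:d10}. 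The case $\eps=0$ is trivial.

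The only point needing care is choosing the comparison $u\ge \ee^{\abs{x}}$ so that a single monotonicity argument covers both signs of $\eps$. Working with $\abs{\eps}$ throughout handles this.
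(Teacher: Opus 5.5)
Your proof is correct and follows essentially the paper's route: you pass to logarithmic variables, prove the two-sided bound $\ee^{-\abs{x}}\le w(ts)/w(s)\le \ee^{\abs{x}}$, and compare it with $w(t)^{-1}=2\cosh x\ge \ee^{\abs{x}}$. The differences are cosmetic. You check \eqref{eq:d8} directly from the addition formula, whereas the paper reads it off from $w(ts)/w(s)>w(t)$. For \eqref{eq:d10} you go through the intermediate bound $\ee^{\abs{\eps x}}-1$, whereas the paper simply notes that $(w(ts)/w(s))^\eps$ and $1$ both lie between $w(t)^{\eps}$ and $w(t)^{-\eps}$.
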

\begin{proof}
Writing $x=\log t$ and $y=\log s$ and using \eqref{eq:d1a}, the ratio $w(ts)/w(s)$ can be written as
\begin{align*}
\frac{w(ts)}{w(s)}
&=\frac{\sech(x+y)}{\sech y}
=\frac{\cosh y}{\cosh (x+y)}
\\
&=\frac{\cosh y}{\cosh x\cosh y +\sinh x\sinh y}
=\frac1{\cosh x+\sinh x\tanh y}.
\end{align*}
Since $\tanh y\in (-1,1)$, from here we find 
\begin{equation}
\ee^{-\abs{x}}<\frac{w(ts)}{w(s)}<\ee^{\abs{x}}.
\label{eq:d10a}
\end{equation}
Furthermore, from $w(t)^{-1}=\ee^{\abs{x}}+\ee^{-\abs{x}}$ it is immediate that $w(t)<\ee^{-\abs{x}}$. Combining this with \eqref{eq:d10a}, we obtain the $s$-independent bound
\[
w(t)<\frac{w(ts)}{w(s)}<w(t)^{-1},
\]
whence \eqref{eq:d8} and \eqref{eq:d10} easily follow. 
\end{proof}

\subsection{Proof of Lemma~\ref{lma:d1}}
Let us first prove that $M_\alpha$ extends to a bounded operator by giving a bound on its quadratic form. The argument below is a minor variation of the classical proof of the boundedness of the Carleman operator via the Schur test. 

Below $f,g\in C_{\comp}^\infty(\bbR_+)$. We have
\begin{align*}
\jap{M_\alpha f,g}
=
\jap{HW^{-\alpha}f,W^\alpha g}
=
\int_0^\infty \int_0^\infty h(t+s)(w(t)/w(s))^\alpha f(s)\overline{g(t)}\dd s\, \dd t. 
\end{align*}
Using \eqref{eq:h-estimate}, we find 
\begin{align*}
\abs{\jap{M_\alpha f,g}}
\leq
C_{h}
\int_0^\infty \int_0^\infty (w(t)/w(s))^\alpha \frac{\abs{f(s)g(t)}}{t+s}\dd s\, \dd t. 
\end{align*}
It suffices to consider the case $\alpha\in(0,1/2)$ (the case $\alpha\in(-1/2,0)$ is considered by interchanging the roles of $f$ and $g$). Let us fix some $\beta\in\bbR$ such that $0<\beta<1$ and $0<\beta+2\alpha<1$. We multiply and divide the integrand above by $(t/s)^{\beta/2}$ and use Cauchy-Schwarz. This yields
\[
\abs{\jap{M_\alpha f,g}}\leq C_*\sqrt{AB}, 
\]
where
\begin{align*}
A&=
\int_0^\infty \int_0^\infty (s/t)^\beta \frac{\abs{f(s)}^2}{t+s}\dd s\, \dd t, 
\\
B&=
\int_0^\infty \int_0^\infty (w(t)/w(s))^{2\alpha} (t/s)^\beta \frac{\abs{g(t)}^2}{t+s}\dd s\, \dd t.
\end{align*}
Let us rewrite $A$ as 
\[
\int_0^\infty \abs{f(s)}^2 \left\{\int_0^\infty (s/t)^\beta \frac{\dd t}{t+s}\right\}\dd s,
\]
and for the integral in $\{\cdots\}$ by a change of variable we find
\[
\int_0^\infty (s/t)^\beta \frac{\dd t}{t+s}=\int_0^\infty \frac{\dd t}{t^\beta(t+1)}=C_\beta
\]
which is independent of $s$. We thus find 
\[
A\leq C_\beta\norm{f}^2.
\]
In a similar way, for $B$ be have 
\[
B\leq \int_0^\infty\abs{g(t)}^2\left\{\int_0^\infty \left(\frac{w(t)}{w(s)}\right)^{2\alpha}\frac{t^\beta}{s^\beta}\frac{\dd s}{t+s}\right\}\dd t
\]
and for the integral in $\{\cdots\}$ by a change of variable we find
\[
\int_0^\infty \left(\frac{w(t)}{w(s)}\right)^{2\alpha}\frac{t^\beta}{s^\beta}\frac{\dd s}{t+s}
=
\int_0^\infty \left(\frac{w(t)}{w(ts)}\right)^{2\alpha}\frac{1}{s^\beta}\frac{\dd s}{1+s}.
\]
Using \eqref{eq:d8}, we estimate the last integral as 
\[
\int_0^\infty \left(\frac{w(t)}{w(ts)}\right)^{2\alpha}\frac{1}{s^\beta}\frac{\dd s}{1+s}
\leq
\int_0^\infty\frac{1}{w(s)^{2\alpha}s^\beta}\frac{\dd s}{1+s}=C_{\alpha,\beta}
\]
where the right hand side is finite and independent of $t$, which gives 
\[
B\leq C_{\alpha,\beta}\norm{g}^2.
\]
We have obtained the estimate
\[
\abs{\jap{M_\alpha f,g}}\leq C\norm{f}\norm{g},
\]
and so $M_\alpha$ is bounded.

Let us prove that $M_\alpha$ depends continuously on $\alpha$ in the operator norm. The estimates below are based on the same idea as above, but are somewhat more involved. We fix $\alpha\in[0,1/2)$ (the case $\alpha\in(-1/2,0]$ is considered in a symmetric fashion); let us prove that 
\[
\norm{M_{\alpha+\eps}-M_\alpha}\to0
\]
as $\eps\to0$. We will prove that for all $f$, $g$ as above, 
\[
\abs{\jap{(M_{\alpha+\eps}-M_\alpha)f,g}}\leq K_{\alpha,\eps}\norm{f}\norm{g}
\]
where $K_{\alpha,\eps}\to0$ as $\eps\to0$. Similarly to the first part of the proof we find
\begin{align*}
\abs{\jap{(M_{\alpha+\eps}-M_\alpha)f,g}}
&\leq
C_h\int_0^\infty\abs{(w(t)/w(s))^\eps-1}(w(t)/w(s))^\alpha \frac{\abs{f(s)g(t)}}{t+s}\dd s\, \dd t
\\
&\leq C_h \sqrt{A_\eps B_\eps},
\end{align*}
where 
\begin{align*}
A_\eps&=
\int_0^\infty \int_0^\infty \abs{(w(t)/w(s))^\eps-1} (s/t)^\beta \frac{\abs{f(s)}^2}{t+s}\dd s\, \dd t, 
\\
B_\eps&=
\int_0^\infty \int_0^\infty \abs{(w(t)/w(s))^\eps-1} (w(t)/w(s))^{2\alpha} (t/s)^\beta \frac{\abs{g(t)}^2}{t+s}\dd s\, \dd t.
\end{align*}
We have 
\begin{align*}
A_\eps=
\int_0^\infty \abs{f(s)}^2 a_\eps(s)\dd s, 
\quad
B_\eps=
\int_0^\infty \abs{g(t)}^2 b_\eps(t)\dd t,
\end{align*}
where
\begin{align*}
a_\eps(s)&=\int_0^\infty \abs{(w(t)/w(s))^\eps-1} (s/t)^\beta \frac{\dd t}{t+s},
\\
b_\eps(t)&=\int_0^\infty \abs{(w(t)/w(s))^\eps-1} (w(t)/w(s))^{2\alpha} (t/s)^\beta \frac{\dd s}{t+s}.
\end{align*}
Our aim is to prove that 
\begin{align}
\lim_{\eps\to0}\sup_{s>0} a_\eps(s)=0,
\label{eq:d5}
\\
\lim_{\eps\to0}\sup_{t>0} b_\eps(t)=0.
\label{eq:d6}
\end{align}
Changing variable in the integral for $a_\eps(s)$ and using \eqref{eq:d10}, we find
\begin{align*}
a_\eps(s)&=\int_0^\infty \abs{(w(ts)/w(s))^\eps-1} \frac{\dd t}{t^{\beta}(t+1)}
\\
&\leq
\int_0^\infty \abs{w(t)^{\eps}-w(t)^{-\eps}} \frac{\dd t}{t^{\beta}(t+1)}.
\end{align*}
Here the right hand side is independent of $s$. Using dominated convergence, we find that the right hand side tends to zero as $\eps\to0$. We obtain \eqref{eq:d5}.

Next, for $b_\eps(t)$, changing integration variable and using \eqref{eq:d8} we find 
\begin{align*}
b_\eps(t)&=\int_0^\infty \abs{(w(t)/w(st))^\eps-1} (w(t)/w(st))^{2\alpha} \frac{\dd s}{s^\beta(1+s)}
\\
&\leq
\int_0^\infty \abs{(w(t)/w(st))^\eps-1} \frac{\dd s}{w(s)^{2\alpha}s^\beta(1+s)}.
\end{align*}
Using \eqref{eq:d10} with $t$ and $s$ interchanged, we obtain 
\begin{align*}
b_\eps(t)&\leq
\int_0^\infty \abs{w(s)^{\eps}-w(s)^{-\eps}} \frac{\dd s}{w(s)^{2\alpha}s^\beta(1+s)}.
\end{align*}
Here the right hand side is independent of $t$. Using dominated convergence, we find that it tends to zero as $\eps\to0$. We obtain \eqref{eq:d6}. The proof of Lemma~\ref{lma:d1} is complete. \qed

\section{Proof of Theorem~\ref{thm:b5}(ii) (BGK theory)}
\label{sec:h}

\subsection{General part}
Here our construction is entirely parallel to the case of the Schr\"odinger operator, dubbed BGK theory in  \cite[Section~C.5]{Simon}. Below $w_{\log}$ is the operator of multiplication by the logarithmic weight
\[
w_{\log}(t)=\frac1{\sqrt{1+(\log t)^2}}, \quad t>0.
\]
Of crucial importance is the following simple fact, which we state as a lemma for emphasis. 

\begin{lemma}\label{lma:h0}
Let $H$ be a bounded positive Hankel operator. 
The operator ${w_{\log}}H{w_{\log}}$ is trace class. 
\end{lemma}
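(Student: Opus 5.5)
Since $H\ge0$ and $w_{\log}$ is a bounded non-negative multiplier, $w_{\log}Hw_{\log}$ is a positive operator. For positive operators, trace class membership is equivalent to finiteness of the trace, and the trace can be computed from the diagonal of the kernel. I will therefore use the factorisation \eqref{eq:g3a}, $H=L_\mu^*L_\mu$, to write
\[
w_{\log}Hw_{\log}=(L_\mu w_{\log})^*(L_\mu w_{\log}).
\]
It then suffices to show that $L_\mu w_{\log}:L^2(\bbR_+)\to L^2(\mu)$ is Hilbert--Schmidt. This operator has kernel $\ee^{-t\lambda}w_{\log}(t)$, so its Hilbert--Schmidt norm squared is
\[
\int_0^\infty\!\!\int_0^\infty \ee^{-2t\lambda}w_{\log}(t)^2\,\dd t\,\dd\mu(\lambda)
=\int_0^\infty h_\mu(2t)\,w_{\log}(t)^2\,\dd t,
\]
using Tonelli's theorem to integrate in $\lambda$ first.

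Next I use the bound \eqref{eq:h-estimate}, $h_\mu(2t)\le C_h/(2t)$. This reduces the estimate to
\[
\frac{C_h}{2}\int_0^\infty \frac{1}{1+(\log t)^2}\,\frac{\dd t}{t}.
\]
With the substitution $x=\log t$, this becomes $\frac{C_h}{2}\int_{\bbR}\frac{\dd x}{1+x^2}=\frac{\pi C_h}{2}<\infty$. Hence $L_\mu w_{\log}$ is Hilbert--Schmidt, and $w_{\log}Hw_{\log}$ is trace class with
\[
\Tr(w_{\log}Hw_{\log})\le \tfrac{\pi}{2}C_h .
\]

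There is no real obstacle. The only points requiring care are justifying the factorisation when $w_{\log}$ is inserted, which is immediate since $w_{\log}$ is bounded, and noting that the Hilbert--Schmidt computation is legitimate for the integral operator $L_\mu w_{\log}$ between $L^2$ spaces. The crucial feature is that $w_{\log}^2$ decays exactly fast enough to make $\dd t/t$ integrable in logarithmic variables, which matches the scale invariance of the $1/t$ bound on $h$.
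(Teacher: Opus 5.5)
Your proposal is correct and follows the paper's proof essentially verbatim. Like the paper, you factor $w_{\log}Hw_{\log}=(L_\mu w_{\log})^*(L_\mu w_{\log})$, compute the Hilbert--Schmidt norm as $\int_0^\infty h(2t)w_{\log}(t)^2\,\dd t$, and bound it using $h(t)\le C_h/t$ together with the integrability of $1/(t(1+(\log t)^2))$.
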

\begin{proof}
From the factorisation \eqref{eq:g3a} we find
\[
{w_{\log}}H{w_{\log}}=(L_\mu{w_{\log}})^*(L_\mu{w_{\log}}),
\]
and so it suffices to prove that $L_\mu{w_{\log}}$ is Hilbert-Schmidt. This is a direct calculation:
\begin{align*}
\int_0^\infty \int_0^\infty \ee^{-2t\lambda}({w_{\log}}(t))^{2}\dd t\, \dd\mu(\lambda)
&=
\int_0^\infty ({w_{\log}(t)})^{2}h(2t)\dd t
\leq
C_h 
\int_0^\infty \frac{({w_{\log}}(t))^{2}}{2t}\dd t
\\
&=
C_h
\int_0^\infty 
\frac{1}{2t(1+(\log t)^2)}\dd t<\infty.
\qedhere
\end{align*}
\end{proof}

We recall that a Borel measure $\rho$ on $\bbR$ is called a \emph{spectral measure} for a self-adjoint operator $H$, if $\rho$ is mutually absolutely continuous with the projection-valued spectral measure $\mathcal E_H$ of $H$. 

The following is essentially a copy of parts (i),(ii) and (iv) of \cite[Theorem~C.5.2]{Simon}, including the proof. 

\begin{lemma}\label{thm:h1}
Let $H$ be a positive bounded Hankel operator. 
There exists a spectral measure $\rho$ of $H^\perp$ and for $\rho$-a.e. $E>0$ a function $F(t_1,t_2;E)$ on $\bbR_+\times\bbR_+$ such that $F(t_1,t_2;E)$ is jointly measurable in $t_1,t_2,E$ and 
\begin{enumerate}[\rm (i)]
\item
For $\rho$-a.e. $E>0$:
\begin{equation}
\int_0^\infty \int_0^\infty \abs{F(t_1,t_2;E)}^2{w_{\log}}(t_1)^{2}{w_{\log}}(t_2)^{2}\dd t_1\, \dd t_2\leq1.
\label{eq:h3}
\end{equation}
\item
For any bounded Borel function $g$ on $\bbR$ with $g(0)=0$ and for any 
function $f$ with $f/{w_{\log}}\in L^2(\bbR_+)$, 
one has
\[
(g(H)f)(t_1)
=
\int_0^\infty g(E) \left\{ \int_0^\infty F(t_1,t_2;E)f(t_2)\dd t_2\right\}\dd\rho(E), 
\quad \text{ a.e. $t_1>0$.}
\]
\end{enumerate}
\end{lemma}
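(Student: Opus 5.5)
We will follow the proof of \cite[Theorem~C.5.2]{Simon}, where the trace class property of Lemma~\ref{lma:h0} takes the place of the corresponding Schr\"odinger estimate. Put $T={w_{\log}}H{w_{\log}}$; by Lemma~\ref{lma:h0} it is a positive trace class operator.

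\textbf{Step 1: the spectral measure.} Fix an orthonormal basis $\{e_n\}$ of $L^2(\bbR_+)$ and weights $c_n>0$ with $\sum c_n=1$. Define the finite measure
\[
\rho(\Delta)=\Tr\bigl({w_{\log}}\,\mathcal E_H(\Delta\cap(0,\infty))\,{w_{\log}}\bigr)
\]
for Borel sets $\Delta\subset\bbR$. We must check that this is finite and a spectral measure for $H^\perp$.

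For finiteness, the positive operators ${w_{\log}}\mathcal E_H(\Delta)\mathcal E_H((a,\infty)){w_{\log}}$ are dominated by $a^{-1}T$, so they are trace class. For small $E$ the weight $H$ gives no control, so we restrict to $(a,\infty)$ and use a countable union. Alternatively we replace $\rho$ by the equivalent finite measure $\dd\rho_1(E)=E\,\dd\rho(E)$, which satisfies $\rho_1(\Delta)=\Tr({w_{\log}}H\mathcal E_H(\Delta){w_{\log}})\le\Tr T$. We will work with this $\rho_1$ and absorb the factor $E$ into the kernel $F$.

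For mutual absolute continuity: $\rho_1(\Delta)=0$ forces $\mathcal E_H(\Delta)H^{1/2}{w_{\log}}=0$. Since $\Ran {w_{\log}}$ is dense, $H^{1/2}\mathcal E_H(\Delta)=0$, so $\mathcal E_H(\Delta)$ vanishes on $(\Ker H)^\perp$.

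\textbf{Step 2: Radon--Nikodym densities.} For each pair $n,m$, the complex measure
\[
\Delta\mapsto\jap{H^{1/2}\mathcal E_H(\Delta)H^{1/2}{w_{\log}}e_m,{w_{\log}}e_n}
\]
is absolutely continuous with respect to $\rho_1$. Let $M_{nm}(E)$ be its density. Then for $\rho_1$-a.e.\ $E$, the matrix $M(E)$ is positive semi-definite with $\Tr M(E)\le1$. This follows by comparing the diagonal terms with $\rho_1$ and using positivity on finite sections; to get a single null set for all pairs we use countability.

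Next set
\[
F(t_1,t_2;E)=E^{-1}\,{w_{\log}}(t_1)^{-1}{w_{\log}}(t_2)^{-1}\sum_{n,m}M_{nm}(E)\,e_n(t_1)\overline{e_m(t_2)}.
\]
The factor $E^{-1}$ converts back to $\mathcal E_H$ from $H\mathcal E_H$, and it is absorbed by renormalising the measure. A Hilbert--Schmidt kernel with $\norm{M(E)}_{HS}\le\Tr M(E)\le1$ gives \eqref{eq:h3}. Joint measurability follows from the $L^2$-convergent series representation. To keep the normalisation $\le1$ exactly, we define the final measure as $\rho_1$ and put the $E^{-1}$ into $g$ when verifying (ii).

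\textbf{Step 3: the representation formula.} For $f$ with $f/{w_{\log}}\in L^2$ and $\psi$ with $\psi/{w_{\log}}\in L^2$, write $g(H)=\tilde g(H)H$ with $\tilde g(E)=g(E)/E$. Then
\[
\jap{g(H)f,\psi}=\int \tilde g(E)\,\dd\jap{H^{1/2}\mathcal E_H(E)H^{1/2}f,\psi}.
\]
Expanding $f/{w_{\log}}$ and $\psi/{w_{\log}}$ in the basis $e_n$ gives $\int g(E)\jap{F(\cdot,\cdot;E)f,\psi}\,\dd\rho_1(E)$. Here the hypothesis $g(0)=0$ lets us drop the kernel. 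Since the $\psi$ are dense, we get the pointwise a.e.\ identity.

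\textbf{Main obstacle.} The delicate point is that $\tilde g$ may be unbounded near $E=0$, so interchanging the sum and the integral needs justification. We handle it by first treating $g$ supported in $(a,\infty)$, where $\tilde g$ is bounded and dominated convergence applies thanks to the trace bound. We then let $a\to0$ by monotone convergence, using $g(0)=0$ and the strong convergence $\mathcal E_H((a,\infty))\to\mathcal E_H((0,\infty))$.
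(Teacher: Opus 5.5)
Your strategy is the paper's: Simon's trace-class Radon--Nikodym argument applied to $\Delta\mapsto w_{\log}\mathcal E_H(\Delta)w_{\log}$. However, switching to the finite measure $\dd\rho_1=E\,\dd\rho$ creates a normalisation error that the proposal never resolves.

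With $F$ containing the factor $E^{-1}$, the Hilbert--Schmidt bound only yields
\[
\int_0^\infty\!\!\int_0^\infty \abs{F(t_1,t_2;E)}^2 w_{\log}(t_1)^2 w_{\log}(t_2)^2\,\dd t_1\,\dd t_2=E^{-2}\norm{M(E)}_{\mathrm{HS}}^2\le E^{-2},
\]
so \eqref{eq:h3} is not obtained for $E<1$. Your patch is to keep $\rho_1$, drop $E^{-1}$ from $F$ and move it into $g$. What this actually proves is
\[
(g(H)f)(t_1)=\int_0^\infty g(E)E^{-1}\Bigl\{\int_0^\infty F(t_1,t_2;E)f(t_2)\,\dd t_2\Bigr\}\dd\rho_1(E),
\]
which is (ii) for the measure $E^{-1}\dd\rho_1$, not for $\rho_1$. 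With $\rho_1$ as the final measure, the kernel that makes (ii) true is $E^{-1}$ times the kernel that makes (i) true. So you get one of the two properties, never both.

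The remedy is to drop the demand that the measure be finite. The statement only asks for mutual absolute continuity with the spectral measure of $H^\perp$. The natural measure is in any case genuinely infinite near $0$: for the Carleman operator $\mathcal E_H((0,\infty))=I$, so $\rho((0,\infty))=\Tr(w_{\log}^2)=\infty$.

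So take $\rho(\Delta)=\Tr(w_{\log}\mathcal E_H(\Delta)w_{\log})$ for $\Delta$ separated from $0$, exactly as in the paper. It is finite on such sets because $w_{\log}\mathcal E_H(\Delta)w_{\log}\le a^{-1}w_{\log}Hw_{\log}$ for $\Delta\subset(a,\infty)$. Since $\dd\rho_1=E\,\dd\rho$ and $w_{\log}H^{1/2}\mathcal E_H(\dd E)H^{1/2}w_{\log}=E\,w_{\log}\mathcal E_H(\dd E)w_{\log}$, your matrix $M(E)$ is exactly the density of $w_{\log}\mathcal E_H w_{\log}$ with respect to $\rho$. It has $\Tr M(E)=1$; this is the paper's $a(E)$.

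Hence $F$ built from $M(E)$ without the factor $E^{-1}$, together with $\rho$, satisfies both (i) and (ii). There is then no unbounded $g/E$ to control. The hypothesis $g(0)=0$ is needed only to pass from sets $(a,\infty)$ to $(0,\infty)$.
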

\begin{proof}
We follow the proof of \cite[Theorem~C.5.2]{Simon}.
Consider the operator-valued measure
\[
A(\Delta)={w_{\log}}\mathcal{E}_H(\Delta){w_{\log}}, \quad
\quad \Delta\subset \bbR_+
\]
on $\bbR_+$, where $\mathcal{E}_H$ is the projection-valued spectral measure of $H$ and each Borel set $\Delta$ is separated from the origin. By Lemma~\ref{lma:h0}, each  $A(\Delta)$ is trace-class. Let us define the scalar Borel measure $\rho$ on $\bbR_+$ by 
\[
\rho(\Delta)=\Tr A(\Delta).
\]
From the definition it is clear that $\rho$ is a spectral measure of $H^\perp$. 
By \cite[Theorem~C.5.1]{Simon} (the ``trace class Radon--Nikodym theorem''), 
 there exists a $\rho$-measurable trace-class positive operator $a(E)$ such that
\[
A(\Delta)=\int_{\Delta} a(E)\dd\rho(E)
\]
and
\begin{equation}
\Tr a(E)=1, \quad \text{$\rho$-a.e. $E>0$.}
\label{eq:h1}
\end{equation} 
Let  $a(t_1,t_2;E)$ be the integral kernel of $a(E)$. From \eqref{eq:h1} we find
\begin{equation}
\int_0^\infty \int_0^\infty \abs{a(t_1,t_2;E)}^2 \dd t_1\, \dd t_2
=\Tr(a(E)^2)\leq (\Tr a(E))^2=1. 
\label{eq:h2}
\end{equation}

We define
\[
F(t_1,t_2;E)=
\frac{a(t_1,t_2;E)}{{w_{\log}}(t_1){w_{\log}}(t_2)}
\]
for a.e. $t_1,t_2,E$. Then the measurability of $F$ is evident and (i) follows from \eqref{eq:h2}. 
Writing 
\[
{w_{\log}}g(H){w_{\log}}=\int_0^\infty g(E)a(E)\dd\rho(E)
\]
and applying to $f/{w_{\log}}$, we obtain (ii). 
\end{proof}

\subsection{Completing the proof}
The following lemma is trivial, but we state it explicitly in order to emphasize the concepts of weak and strong forms of the eigenvalue equation for $H$. We recall our notation $C_{\comp,0}^{\infty}(\bbR_+)$, see \eqref{eq:g2a}.
\begin{lemma}\label{lma:h1}
Let $H$ be a positive bounded Hankel operator. 
Let $f$ be a function satisfying \eqref{eq:b7a} such that for some $E\in\bbC\setminus\{0\}$ and for all $\psi\in C_{\comp,0}^{\infty}(\bbR_+)$
\[
\int_0^\infty f(t)H\psi(t)\dd t=E\int_0^\infty f(t)\psi(t)\dd t.
\]
Then $f$ satisfies the Hankel equation $Hf=Ef$. 
\end{lemma}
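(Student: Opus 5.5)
Set $g:=Hf-Ef$. By assumption \eqref{eq:b7a} and \eqref{eq:h-estimate}, the integral defining $Hf(t)$ converges absolutely for every $t>0$. The argument of Proposition~\ref{thm:b4}(ii) then shows that $Hf$ is continuous, indeed smooth, on $\bbR_+$. The plan is first to move $H$ onto $f$ in the weak identity, then to conclude from the zero-average constraint that $g$ is a constant, and finally to show that the constant vanishes.

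\emph{Step 1 (moving $H$ onto $f$).} For $\psi\in C^\infty_{\comp,0}(\bbR_+)\subset C^\infty_{\comp}(\bbR_+)$ supported in $[a,b]$ with $a>0$, I use Fubini to write
\[
\int_0^\infty f(t)(H\psi)(t)\dd t=\int_0^\infty (Hf)(s)\psi(s)\dd s .
\]
This requires the double integral $\iint \abs{f(t)}h(t+s)\abs{\psi(s)}\dd s\,\dd t$ to be finite. It is, because for $s\in[a,b]$ we have $h(t+s)\le C_h/(t+a)\le C(t+1)^{-1}$, and then \eqref{eq:b7a} gives finiteness. Hence $\int g\psi=0$ for all $\psi\in C^\infty_{\comp,0}(\bbR_+)$.

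\emph{Step 2 (reducing to a constant).} A locally integrable function that annihilates all zero-average test functions is a.e.\ constant. To see this, fix $\psi_0\in C^\infty_{\comp}$ with $\int\psi_0=1$. Any $\psi\in C^\infty_{\comp}$ decomposes as $\psi=(\psi-(\int\psi)\psi_0)+(\int\psi)\psi_0$, where the first summand has zero average. Therefore $\int g\psi=c\int\psi$ with $c=\int g\psi_0$, so $g=c$ a.e.; since $g$ is continuous (when $f$ is taken continuous), $g=c$ pointwise.

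\emph{Step 3 (showing $c=0$, the main point).} By the estimate in the proof of Proposition~\ref{thm:b4}(ii),
\[
\abs{(Hf)(t)}\le C_h\int_0^\infty \frac{\abs{f(s)}}{s+t}\dd s,
\]
and dominated convergence gives $(Hf)(t)\to0$ as $t\to\infty$. Hence $Ef(t)=Hf(t)-c\to -c$, so $\abs{f(t)}\ge \abs{c}/(2\abs{E})$ for all large $t$. If $c\ne0$, this makes $\int^\infty\abs{f(t)}(t+1)^{-1}\dd t$ diverge, contradicting \eqref{eq:b7a}. Therefore $c=0$ and $Hf=Ef$.

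A subtlety in Step 3 is that $f$ itself need not be continuous, so the lower bound on $\abs{f}$ holds only almost everywhere. This does not affect the argument: $f=(Hf-c)/E$ a.e., the divergence argument works with the a.e.\ bound, and we may then redefine $f$ as the continuous function $(Hf-c)/E$.
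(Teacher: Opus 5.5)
Your proposal is correct and follows the paper's own proof: Fubini moves $H$ onto $f$, the zero-average test functions show that $Hf-Ef$ is constant, and the constant is forced to vanish by $Hf(t)\to0$ as $t\to\infty$ together with \eqref{eq:b7a}. You also correctly fill in details the paper leaves implicit, namely the absolute-convergence bound justifying Fubini, the decomposition of a general test function against a fixed $\psi_0$ with $\int\psi_0=1$, and the passage from an almost-everywhere identity to the continuous representative of $f$.
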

\begin{proof}
Interchanging the order of integration by Fubini, we find
\[
\int_0^\infty Hf(t)\psi(t)\dd t=E\int_0^\infty f(t)\psi(t)\dd t
\]
for all $\psi\in C_{\comp,0}^{\infty}(\bbR_+)$, and therefore $Ef(t)-Hf(t)=\text{const}$. Since $Hf(t)\to0$ as $t\to\infty$ (see Proposition~\ref{thm:b4}(ii)), we find that $f(t)\to\text{const}/E$ as $t\to\infty$.
By condition \eqref{eq:b7a}, from here we find $\text{const}=0$ and so $Hf=Ef$, as claimed. 
\end{proof}

\begin{remark*}
In the proof of Theorem~\ref{thm:b5}(ii) below we have $\psi\in C_{\comp}^{\infty}$, but in Section~\ref{sec:g}  we will need the case of $\psi\in C_{\comp,0}^{\infty}$. 
\end{remark*}

\begin{proof}[Proof of Theorem~\ref{thm:b5}(ii) (BGK theory)]
Let $F$ and $\rho$ be as in Lemma~\ref{thm:h1}.
For $\psi\in C_{\comp}^{\infty}(\bbR_+)$, we write $g(H)H\psi$ in two ways, using Lemma~\ref{thm:h1}(ii):
\begin{align*}
\int_0^\infty g(E)& \left\{ \int_0^\infty F(t_1,t_2;E)(H\psi)(t_2)\dd t_2\right\}\dd\rho(E)
\\
&=\int_0^\infty g(E)E \left\{ \int_0^\infty F(t_1,t_2;E)\psi(t_2)\dd t_2\right\}\dd\rho(E)
\end{align*}
for a.e. $t_1>0$. Since $g$ is arbitrary, this implies 
\begin{align*}
\int_0^\infty F(t_1,t_2;E)(H\psi)(t_2)\dd t_2
=E  \int_0^\infty F(t_1,t_2;E)\psi(t_2)\dd t_2
\end{align*}
for a.e. $t_1>0$ and $\rho$-a.e. $E>0$. By \eqref{eq:h3} and Cauchy-Schwarz the function $F$ satisfies
\[
\int_0^\infty \abs{F(t_1,t_2;E)}(1+t_2)^{-1}\dd t_2<\infty
\]
almost everywhere. 
By Lemma~\ref{lma:h1}, for a.e. $t_1>0$ and $\rho$-a.e. $E>0$, the function $f(t_2)=F(t_1,t_2;E)$ is a solution to the Hankel equation $Hf=Ef$. By varying $t_1$, we can ensure that $f$ is non-zero. 

Let us prove that $f\in {\calB}_0$. By \eqref{eq:h3}, we have
\[
\int_0^\infty \abs{f(t)}^2({w_{\log}}(t))^{2} \dd t<\infty.
\]
Let us convert this into a pointwise estimate for $f$. Using the Hankel equation and Cauchy-Schwarz, we find
\[
\abs{f(t)}\leq \frac{C_h}{E}\int_0^\infty \frac{\abs{f(s)}}{t+s}\dd s
\leq
\frac{C_h}{E}\left(\int_0^\infty \abs{f(s)}^2({w_{\log}}(s))^{2\eps} \dd s\right)^{1/2}g(t),
\]
where
\[
g(t)=\left(\int_0^\infty \frac{1+(\log s)^2}{(t+s)^{2}}\dd s\right)^{1/2}, \quad t>0.
\]
Elementary analysis shows that $g\in {\calB}_0$, and so $f\in {\calB}_0$. 
The proof is complete. 
\end{proof}

\section{Proof of Theorem~\ref{thm:b5}(iii) (eigenspace dimension $\leq2$) }
\label{sec:f}
\subsection{Preamble}
The basis of our proof is the rank two commutation relation \eqref{eq:f10} below. Let us indicate the origin of \eqref{eq:f10}, as without an explanation it looks like a ``miracle''. Consider the orthonormal basis in $L^2(\bbR_+)$ 
\[
e_n(t)=\sqrt{2}\ee^{-t}L_n(2t), \quad n\geq0,
\]
where $L_n$ are the standard Laguerre polynomials. A calculation shows (see e.g. \cite[Theorem~8.9]{Peller}) that the matrix of any integral Hankel operator in this basis is a Hankel matrix, i.e. 
\[
\jap{He_n,e_m}=h_{n+m}, \quad n,m\geq0,
\]
for some sequence $\{h_{n}\}_{n=0}^\infty$ ($h_n$ can be expressed in terms of the kernel function of $H$). 

Let us for a moment discuss the theory of Hankel matrices 
\[
\{h_{n+m}\}_{n,m=0}^\infty,
\]
viewed as operators on $\ell^2(\bbZ_+)$. Denoting by $\calS$ the standard right shift operator in $\ell^2(\bbZ_+)$, 
\begin{equation}
(\calS x)_n=x_{n-1}, \quad (\calS x)_0=0,
\label{eq:f0b}
\end{equation} 
one can easily check that  Hankel matrices satisfy the commutation relation 
\[
\calS^*\{h_{n+m}\}=\{h_{n+m}\}\calS.
\]
From this commutation relation it is easy to get \cite[formula (1.4)]{MPT}
\begin{equation}
(\calS-\calS^*)\{h_{n+m}\}=-\{h_{n+m}\}(\calS-\calS^*)+\text{(rank two operator)}.
\label{eq:f0}
\end{equation}
The relation \eqref{eq:f10} below is the mapping of \eqref{eq:f0} from the class of Hankel matrices to the class of integral Hankel operators. Rather than constructing this mapping, below we derive \eqref{eq:f10} directly, using \eqref{eq:f0} as a template.

\subsection{The operators $\calS$ and $A$}
Let $A$ be the operator in $L^2(\bbR_+)$ defined by 
\begin{equation}
(Af)(t)=2 e^{-t}\int_0^t e^sf(s)\dd s.
\label{eq:f3}
\end{equation}
Applying the Schur test, it is easy to see that $A$ is bounded. 
It is clear that the adjoint of $A$ is given by 
\[
(A^*f)(t)=2 e^{t}\int_t^\infty e^{-s}f(s)\dd s.
\]
By a slight abuse of notation, we will denote 
\begin{equation}
\calS=I-A;
\label{eq:f4a}
\end{equation}
as we shall see, this is the exact analogue of the shift operator \eqref{eq:f0b}.

\begin{lemma}
Let $\calS$ be the operator defined by \eqref{eq:f3}, \eqref{eq:f4a}. Then 
$\calS$ satisfies 
\begin{align}
\calS^*\calS&=I, \quad 
\calS\calS^*=I-\jap{\cdot,e}e, \quad\text{where}\quad e(t)=\sqrt{2}e^{-t}.
\label{eq:f6}
\end{align}
\end{lemma}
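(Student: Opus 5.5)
We need to show that $\calS=I-A$ is an isometry whose range is the orthogonal complement of $e$. The natural route is through the Laplace transform, with a direct computation as a cross-check.

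Under the Laplace transform, $L^2(\bbR_+)$ maps unitarily (up to the factor $1/\sqrt{2\pi}$) onto the Hardy space $H^2$ of the right half-plane. The operator $A$ is convolution with $2\ee^{-t}\mathbf 1_{t>0}$, so it becomes multiplication by $2/(z+1)$. Consequently $\calS$ becomes multiplication by
\[
1-\frac{2}{z+1}=\frac{z-1}{z+1}.
\]
This function is inner: it has modulus one on $\ii\bbR$. Therefore $\calS$ is an isometry, which gives $\calS^*\calS=I$.

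The range of multiplication by an inner function $\theta$ on $H^2$ is $\theta H^2$. Its orthogonal complement is the model space $H^2\ominus\theta H^2$. For $\theta(z)=(z-1)/(z+1)$ this model space is one-dimensional, spanned by the reproducing kernel $1/(z+1)$, which is the Laplace transform of $\ee^{-t}$. Normalising gives $e(t)=\sqrt2\,\ee^{-t}$. Since $\calS\calS^*$ is the orthogonal projection onto $\Ran\calS$, we obtain $\calS\calS^*=I-\jap{\cdot,e}e$.

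If one prefers to avoid Hardy space theory, the same identities can be checked directly.

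First, to show $A+A^*=A^*A$ (equivalent to $\calS^*\calS=I$), compute the kernel of $A^*A$. This is
\[
4\int_{\max(t,s)}^\infty \ee^{t-u}\ee^{s-u}\,\dd u=2\,\ee^{-\abs{t-s}},
\]
which coincides with the kernel of $A+A^*$, namely $2\ee^{-(t-s)}\mathbf 1_{s<t}+2\ee^{-(s-t)}\mathbf 1_{s>t}$.

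Second, for $\calS\calS^*=I-\jap{\cdot,e}e$, we need $AA^*=A+A^*-\jap{\cdot,e}e$. The kernel of $AA^*$ is
\[
4\int_0^{\min(t,s)}\ee^{u-t}\ee^{u-s}\,\dd u=2\,\ee^{-(t+s)}\bigl(\ee^{2\min(t,s)}-1\bigr)=2\,\ee^{-\abs{t-s}}-2\,\ee^{-t-s}.
\]
The last term, $2\ee^{-t-s}$, is exactly the kernel of $\jap{\cdot,e}e$.

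The only delicate point is justifying these kernel computations for bounded operators. This follows from Fubini on a dense set such as $C^\infty_\comp(\bbR_+)$, using the boundedness of $A$. I do not expect any real obstacle.
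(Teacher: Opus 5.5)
Your proposal is correct, and its second half is precisely the paper's proof: the paper computes the kernels of $A+A^*$, $A^*A$ and $AA^*$ exactly as you do, obtains $A^*A=A+A^*$ and $AA^*=A+A^*-\jap{\cdot,e}e$, and reads off both identities.

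Your primary argument via the Laplace transform is a genuinely different route. There $A$ becomes multiplication by $2/(z+1)$ on the Hardy space of the right half-plane, so $\calS$ becomes multiplication by the Blaschke factor $(z-1)/(z+1)$. Its unimodularity on $\ii\bbR$ gives the isometry. Meanwhile $\Ran\calS$ corresponds to $\{F\in H^2: F(1)=0\}$, whose orthogonal complement is spanned by the reproducing kernel $1/(z+1)$, the transform of $\ee^{-t}$.

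The paper's route needs nothing beyond Fubini and is self-contained. Yours imports Paley--Wiener and the division property of Blaschke factors, but in exchange it explains the structure behind the lemma, which the paper only states heuristically. The Laplace transform of the Laguerre function $e_n(t)=\sqrt2\,\ee^{-t}L_n(2t)$ is $\sqrt2\,(z-1)^n/(z+1)^{n+1}$. So your picture shows at once that $\calS e_n=e_{n+1}$ for all $n\geq0$, with $\Ker\calS^*$ spanned by $e_0=e$. In other words, $\calS$ is literally the shift \eqref{eq:f0b} in the Laguerre basis; equivalently, after the Cayley map $\zeta=(z-1)/(z+1)$ it is multiplication by $\zeta$ on $H^2$ of the disc. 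This is exactly the sense in which the paper calls $\calS$ ``the exact analogue of the shift operator''.
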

\begin{proof}
We denote 
\[
a\wedge b=\min\{a,b\}, \quad a\vee b=\max\{a,b\}.
\]
By inspection, we have 
\[
(A+A^*)f(t)=2\int_0^\infty e^{-\abs{t-s}}f(s)\dd s,
\]
and furthermore 
\begin{align*}
A^*Af(t)&=4e^t\int_t^\infty e^{-s} \left(e^{-s}\int_0^s e^u f(u)\dd u\right)\dd s
=4\int_0^\infty e^{t+u}f(u)\left(\int_{t\vee u}^\infty e^{-2s}\dd s\right)\dd u
\\
&=2\int_0^\infty e^{t+u-2t\vee u}f(u)\dd u
=2\int_0^\infty e^{-\abs{t-u}}f(u)\dd u.
\end{align*}
Similarly, 
\begin{align*}
AA^*f(t)&=4e^{-t}\int_0^t e^s \left(e^{s}\int_s^\infty e^{-u} f(u)\dd u\right)\dd s
\\
&=4\int_0^\infty e^{-(t+u)}f(u)\left(\int_0^{t\wedge u} e^{2s}\dd s\right)\dd u
=2\int_0^\infty e^{-(t+u)}f(u)\bigl(e^{2t\wedge u}-1)\dd u
\\
&=2\int_0^\infty e^{-\abs{t-u}}f(u)\dd u-2\int_0^\infty e^{-t}e^{-u}f(u)\dd u.
\end{align*}
From here we conclude that 
\begin{align*}
A^*A&=A+A^*,
\\
AA^*&=A+A^*-\jap{\cdot,e}e,
\end{align*}
which is equivalent to \eqref{eq:f6}.
\end{proof}

\subsection{The rank two commutator identity}
Let $\mu$ be a Carleson measure and let $h=h_\mu$, $H=H_\mu$. Let us denote
\[
p=\calS He,
\]
where $e$ is the function defined in \eqref{eq:f6} and $\calS$ is defined by \eqref{eq:f3}, \eqref{eq:f4a}.
\begin{theorem}\label{thm:f2}
We have the commutator identities
\begin{align}
\calS^*H&=H\calS,
\label{eq:f7}
\\
\calS H&=H\calS^*-\jap{\cdot,p}e+\jap{\cdot,e}p,
\label{eq:f8}
\end{align}
and
\begin{equation}
\boxed{(\calS-\calS^*)H=-H(\calS-\calS^*)-\jap{\cdot,p}e+\jap{\cdot,e}p.}
\label{eq:f10}
\end{equation}
\end{theorem}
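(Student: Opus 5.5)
The plan is to prove \eqref{eq:f7} by a direct kernel computation. Then \eqref{eq:f8} follows from \eqref{eq:f7} by pure algebra using the isometry relations \eqref{eq:f6}, and \eqref{eq:f10} is the difference of the two.

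\emph{Step 1: \eqref{eq:f7}.} Since $\calS=I-A$, the identity \eqref{eq:f7} is equivalent to $A^*H=HA$. I will check this on $f\in C^\infty_{\comp}(\bbR_+)$ and then extend by density, since all operators involved are bounded.

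For $HAf$, Fubini gives
\[
(HAf)(t)=\int_0^\infty f(u)\,K_1(t,u)\,\dd u, \qquad K_1(t,u)=2\ee^{u}\int_u^\infty h(t+s)\ee^{-s}\dd s .
\]
Similarly,
\[
(A^*Hf)(t)=\int_0^\infty f(u)\,K_2(t,u)\,\dd u, \qquad K_2(t,u)=2\ee^{t}\int_t^\infty \ee^{-s}h(s+u)\dd s .
\]
The interchanges of integration are justified by the bound $h(t)\le C_\mu/t$ from \eqref{eq:b7b} together with the compact support of $f$.

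Now substitute $s=u+r$ in $K_1$ and $s=t+r$ in $K_2$. Both become
\[
2\int_0^\infty h(t+u+r)\ee^{-r}\dd r,
\]
which is symmetric in $t$ and $u$. Hence $K_1=K_2$, and \eqref{eq:f7} holds.

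\emph{Step 2: \eqref{eq:f8}.} Taking adjoints in \eqref{eq:f7} gives $H\calS^{*}=\ldots$ only trivially, so instead I insert the identity $I=\calS\calS^*+\jap{\cdot,e}e$ from \eqref{eq:f6} on the left of $H\calS^*$:
\[
H\calS^*=\calS(\calS^*H)\calS^*+\jap{H\calS^*\,\cdot,e}\,e .
\]
In the first term, use \eqref{eq:f7} and then \eqref{eq:f6} again:
\[
\calS(\calS^*H)\calS^*=\calS H\calS\calS^*=\calS H-\jap{\cdot,e}\,\calS He=\calS H-\jap{\cdot,e}\,p .
\]
In the second term, $H$ is self-adjoint, so
\[
\jap{H\calS^*\psi,e}=\jap{\psi,\calS He}=\jap{\psi,p}.
\]
Combining the two terms,
\[
H\calS^*=\calS H-\jap{\cdot,e}p+\jap{\cdot,p}e,
\]
which rearranges to \eqref{eq:f8}.

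\emph{Step 3: \eqref{eq:f10}.} Subtract \eqref{eq:f7} from \eqref{eq:f8}. This gives
\[
(\calS-\calS^*)H=H\calS^*-H\calS-\jap{\cdot,p}e+\jap{\cdot,e}p ,
\]
which is exactly \eqref{eq:f10}.

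The only genuinely analytic point is Step 1, specifically the Fubini justifications and the extension from the dense class. Both should be routine given the bound $h(t)\le C_\mu/t$ and the boundedness of $A$ and $H$.
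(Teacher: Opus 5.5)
Your proof is correct and follows the paper's argument essentially step for step. You prove $HA=A^*H$ by the same kernel substitution, derive \eqref{eq:f8} by inserting $I=\calS\calS^*+\jap{\cdot,e}e$ and using \eqref{eq:f7} and \eqref{eq:f6}, and obtain \eqref{eq:f10} by subtraction. Your Fubini and density justifications fill in details the paper leaves implicit, and your aside about taking adjoints of \eqref{eq:f7} can be dropped, since that adjoint just returns \eqref{eq:f7} itself.
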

Formula \eqref{eq:f10} is our main tool in this section.
\begin{proof}
The proof of \eqref{eq:f7} is a direct calculation. Indeed, we have 
\begin{align*}
(HAf)(t)&=2\int_0^\infty h(t+s)\left(e^{-s}\int_0^s e^u f(u)\dd u\right)\dd s
\\
&=2\int_0^\infty f(u)\left(e^u\int_u^\infty e^{-s} h(t+s)\dd s\right)\dd u
\\
&=2\int_0^\infty f(u)\left(\int_0^\infty e^{-s}h(t+s+u)\dd s\right)\dd u
\end{align*}
and similarly
\begin{align*}
(A^*Hf)(t)&=2e^t \int_t^\infty e^{-s}\left(\int_0^\infty h(s+u)f(u)\dd u\right)\dd s
\\
&=2\int_0^\infty f(u)\left(e^t\int_t^\infty e^{-s}h(s+u)\dd s\right)\dd u
\\
&=2\int_0^\infty f(u)\left(\int_0^\infty e^{-s}h(t+s+u)\dd s\right)\dd u.
\end{align*}
Here the right hand sides coincide, and so we get \eqref{eq:f7}. 

Using the identity $I = \calS\calS^* + \jap{\cdot, e}e$, we find:
\begin{align*}
H\calS^* &= (\calS\calS^* + \jap{\cdot, e}e) H\calS^* 
&= \calS(\calS^*H)\calS^* + \jap{H\calS^* \cdot, e}e 
&= \calS(H\calS)\calS^* + \jap{\cdot, \calS He}e.
\end{align*}
Using $p = \calS He$ and substituting $\calS\calS^* = I - \jap{\cdot, e}e$ into the first term above, we find:
\begin{align*}
H\calS^* &= \calS H(I - \jap{\cdot, e}e) + \jap{\cdot, p}e 
= \calS H - \jap{\cdot, e}\calS He + \jap{\cdot, p}e \\
&= \calS H - \jap{\cdot, e}p + \jap{\cdot, p}e.
\end{align*}
Rearranging yields \eqref{eq:f8}. 
The last relation \eqref{eq:f10} is obtained by subtracting \eqref{eq:f7} from \eqref{eq:f8}. 
\end{proof}

\subsection{Extending to ${\calB}_0$}
Our aim is to extend the rank-two commutator identity \eqref{eq:f10} to the space ${\calB}_0$. 
As an intermediate step, we consider the spaces ${\calB}_\eps$ with $\eps\in (0,\frac12)$, with the norm 
\[
\norm{f}_{{\calB}_\eps}=
\sup_{0<t<1}t^{\frac12+\eps}\abs{f(t)}+\sup_{t>1}t^{\frac12-\eps}\abs{f(t)}.
\]

Below $H$, $h$ and $p$ are as in Theorem~\ref{thm:f2}. 
\begin{lemma}\label{lma:f3}
For any $\eps\in (0,\frac12)$, the operators $H$, $\calS$, $\calS^*$ are bounded on ${\calB}_\eps$. 
\end{lemma}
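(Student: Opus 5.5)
Since all three operators are integral operators with explicit kernels, the plan is to estimate each one directly against the two weights defining $\norm{\cdot}_{\calB_\eps}$. Equivalently, we show $\abs{Tf(t)}\le C\,\omega_\eps(t)$ whenever $\abs{f(s)}\le\omega_\eps(s)$, where
\[
\omega_\eps(s)=s^{-\frac12-\eps}\1_{(0,1)}(s)+s^{-\frac12+\eps}\1_{[1,\infty)}(s).
\]
It suffices to check this for $T=H$, $A$ and $A^*$, since $\calS=I-A$ and $\calS^*=I-A^*$.

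\textbf{The operator $H$.} By \eqref{eq:h-estimate} we have $\abs{Hf(t)}\le C_h\int_0^\infty \omega_\eps(s)(t+s)^{-1}\dd s$, so it suffices to bound this integral by $C\omega_\eps(t)$. Since $\omega_\eps(s)\le s^{-\frac12-\eps}+s^{-\frac12+\eps}$ and both exponents lie in $(-1,0)$, the Carleman identity from the introduction (with $\kappa=\pm\eps$) gives
\[
\int_0^\infty \omega_\eps(s)\frac{\dd s}{t+s}\le \pi\sec(\pi\eps)\bigl(t^{-\frac12-\eps}+t^{-\frac12+\eps}\bigr)\le 2\pi\sec(\pi\eps)\,\omega_\eps(t).
\]
This settles $H$.

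\textbf{The operator $A$.} We need to bound $2e^{-t}\int_0^t e^s\omega_\eps(s)\dd s$.

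For $t\le1$, we use $e^{s-t}\le1$ and get at most $2\int_0^t s^{-\frac12-\eps}\dd s=C t^{\frac12-\eps}$. This is bounded by $C t^{-\frac12-\eps}$ since $t\le1$.

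For $t>1$, we split the integral at $s=1$:
\begin{itemize}
\item The part over $(0,1)$ is at most $Ce^{-t}$, which is $\le C t^{-\frac12+\eps}$.
\item The part over $(1,t)$ is $2\int_1^t e^{-(t-s)}s^{-\frac12+\eps}\dd s$. The function $s^{-\frac12+\eps}$ is decreasing and varies slowly relative to the exponential. Splitting at $s=t/2$, the contribution from $(t/2,t)$ is $\le C(t/2)^{-\frac12+\eps}$, and the contribution from $(1,t/2)$ is $\le e^{-t/2}\cdot C$. Both are $O(t^{-\frac12+\eps})$.
\end{itemize}

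\textbf{The operator $A^*$.} We need to bound $2\int_t^\infty e^{-(s-t)}\omega_\eps(s)\dd s$.

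For $t\ge1$, $\omega_\eps$ is decreasing on $[t,\infty)$, so the integral is $\le 2\omega_\eps(t)\int_0^\infty e^{-u}\dd u=2\omega_\eps(t)$.

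For $t<1$, note that $\omega_\eps$ is decreasing on the whole half-line $(0,\infty)$: it decreases on $(0,1)$, decreases on $[1,\infty)$, and $\omega_\eps(1^-)=1=\omega_\eps(1)$, so there is no upward jump at $s=1$. Hence the same bound $2\omega_\eps(t)$ holds for $t<1$ as well.

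\textbf{Conclusion and where the care goes.} The expected "obstacle" is only the region $t>1$ for $A$, where a Laplace-type splitting is needed to keep the slowly decaying weight. There is also a small point of rigour: $H$ must be defined on $\calB_\eps$ by the absolutely convergent integral, which is justified by \eqref{eq:b7a}. All constants depend only on $\eps$ and $C_h$. Boundedness of $\calS$ and $\calS^*$ then follows immediately from the bounds for $A$ and $A^*$.
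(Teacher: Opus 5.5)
Your proposal is correct and follows the paper's approach. You dominate $\abs{Hf}$, $\abs{Af}$ and $\abs{A^*f}$ pointwise by the kernels $C_h/(t+s)$ and $2e^{-\abs{s-t}}$ applied to the weight, and check that the result lies in $\calB_\eps$. You also carry out the calculations the paper leaves as ``a calculation shows'' and ``by inspection'': the Carleman identity with $\kappa=\pm\eps$ gives a clean bound for $H$, and the monotonicity of the weight handles $A^*$.
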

\begin{proof}
For $f\in {\calB}_\eps$, we have 
\[
\abs{Hf(t)}
\leq 
C_{h}\norm{f}_{{\calB}_\eps}
\left\{
\int_0^1 (s+t)^{-1}s^{-\frac12-\eps}\dd s
+
\int_1^\infty (s+t)^{-1}s^{-\frac12+\eps}\dd s
\right\}
\]
A calculation shows that the sum in $\{\cdots\}$ is in ${\calB}_\eps$. 
Thus, $H$ is bounded on ${\calB}_\eps$. 
Similarly, 
\begin{align*}
\abs{Af(t)}+\abs{A^*f(t)}
&\leq
2\int_0^\infty \ee^{-\abs{s-t}}\abs{f(s)}\dd s
\\
&\leq
2\norm{f}_{{\calB}_\eps}
\left\{\int_0^1  \ee^{-\abs{s-t}}s^{-\frac12-\eps}\dd s
+
\int_1^\infty  \ee^{-\abs{s-t}}s^{-\frac12+\eps}\dd s\right\}
\end{align*}
and again by inspection the term in $\{\cdots\}$ is in ${\calB}_\eps$. 
Thus, $A$ and $A^*$ (and therefore $\calS$, $\calS^*$) are bounded on ${\calB}_\eps$. 
\end{proof}

\begin{lemma}\label{lma.f4}
The function $p=\calS He$ satisfies
\begin{equation}
\abs{p(t)}\leq C\frac{\abs{\log t}}{1+t}, \quad t>0.
\label{eq:f11}
\end{equation}
\end{lemma}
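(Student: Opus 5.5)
The plan is to compute $p=\calS He$ explicitly through the Laplace representation \eqref{eq:a6} and then estimate the resulting integral against $\mu$ using only the Carleson condition \eqref{eq:Carleson}. By Fubini,
\[
(He)(t)=\sqrt2\int_0^\infty \ee^{-t\lambda}\frac{\dd\mu(\lambda)}{1+\lambda}.
\]
The operator $A$ from \eqref{eq:f3} acts explicitly on exponentials:
\[
A\bigl(\ee^{-\lambda\,\cdot}\bigr)(t)=\frac{2(\ee^{-\lambda t}-\ee^{-t})}{1-\lambda}.
\]
Hence $\calS \ee^{-\lambda\,\cdot}=\bigl(2\ee^{-t}-(1+\lambda)\ee^{-\lambda t}\bigr)/(1-\lambda)$. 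Interchanging $\calS$ with the $\mu$-integral, which is justified by absolute convergence, I expect
\[
p(t)=\sqrt2\int_0^\infty k(t,\lambda)\,\dd\mu(\lambda),\qquad
k(t,\lambda)=\frac{2\ee^{-t}-(1+\lambda)\ee^{-\lambda t}}{1-\lambda^2}.
\]
The singularity at $\lambda=1$ is removable, since the numerator vanishes there.

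I would then bound $\int\abs{k(t,\lambda)}\dd\mu(\lambda)$ by splitting the $\lambda$-axis into three regions.

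\emph{Region $\lambda\le\frac12$.} Here $\abs{k}\lesssim \ee^{-t}+\ee^{-\lambda t}$. Since $\mu([0,\frac12])<\infty$ and, by \eqref{eq:b7c}, $\int_0^{1/2}\ee^{-\lambda t}\dd\mu\le\min(C,C_\mu/t)$, this part is $\lesssim (1+t)^{-1}$.

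\emph{Region $\frac12\le\lambda\le2$.} By the mean value theorem in $\lambda$ applied to the numerator, $\abs{k}\lesssim (1+t)\ee^{-t/2}$. The $\mu$-mass of this region is finite, so this part is $\lesssim (1+t)^{-1}$ as well.

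\emph{Region $\lambda\ge2$.} Here $\abs{k}\lesssim \ee^{-t}\lambda^{-2}+\ee^{-\lambda t}\lambda^{-1}$. The Carleson condition together with integration by parts gives $\int_2^\infty\lambda^{-2}\dd\mu<\infty$, so the first term contributes $\lesssim\ee^{-t}$. For the second term I integrate by parts against $\mu(0,\lambda)\le C_\mu\lambda$, exactly as in \eqref{eq:b7c}. This gives
\[
\int_2^\infty \ee^{-\lambda t}\frac{\dd\mu(\lambda)}{\lambda}\lesssim \int_2^\infty \ee^{-\lambda t}\Bigl(\frac1\lambda+t\Bigr)\dd\lambda ,
\]
which is $\lesssim 1+\abs{\log t}$ for $t<1$ and $\lesssim \ee^{-t}$ for $t\ge1$.

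Collecting the three regions yields $\abs{p(t)}\le C(1+\abs{\log t})/(1+t)$. I would note in the text that this is evidently the intended reading of \eqref{eq:f11}: the literal factor $\abs{\log t}$ vanishes at $t=1$, while $p(1)$ need not, so it should be read as $1+\abs{\log t}$, which is what is used for membership in $\calB_\eps$.

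The main obstacle I anticipate is the large-$\lambda$ region for small $t$. There the Carleson condition alone only gives a logarithmic bound, so the integration by parts has to be done carefully, with the boundary term at $\lambda=2$ controlled by $\mu(0,2)$. The uniform control of $k$ near $\lambda=1$ is routine by comparison.
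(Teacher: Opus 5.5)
Your argument is correct. Your remark that \eqref{eq:f11} must be read with $1+\abs{\log t}$ is also right. For the point mass at $\lambda=2$ one finds $p(t)=\frac{\sqrt2}{3}(3\ee^{-2t}-2\ee^{-t})$, which is non-zero at $t=1$. The paper's own proof in fact only yields $O(\abs{\log t})$ as $t\to0_+$ and $O(t^{-1})$ as $t\to\infty$. That is exactly the $1+\abs{\log t}$ bound needed to extend the rank-one operators to $\calB_\eps$.

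Your route differs from the paper's. The paper never passes to the Laplace representation. It bounds $He$ pointwise using only $h(t)\le C_h/t$, which gives $\abs{He(t)}\le C\int_0^\infty \ee^{-s}(t+s)^{-1}\dd s$. This is $O(\abs{\log t})$ at $0$ and $O(t^{-1})$ at infinity. The paper then observes that $A$, a Volterra operator with kernel $2\ee^{-(t-s)}$ for $s<t$, preserves bounds of the form $C(1+\abs{\log t})/(1+t)$. That argument takes two lines and uses nothing but the kernel estimate, neither positivity nor the Laplace structure. The logarithm falls out of an elementary integral. So the paper needs none of the three-region splitting, the removable singularity at $\lambda=1$, or the integration by parts at large $\lambda$ that you flag as the main obstacle. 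What your computation buys in exchange is the exact formula $p(t)=\sqrt2\int_0^\infty k(t,\lambda)\dd\mu(\lambda)$. It shows directly how each part of $\mu$ contributes; in particular, the logarithmic growth at $t=0$ comes only from the tail of $\mu$ as $\lambda\to\infty$.
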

\begin{proof}
Consider the function $He$; we have
\[
He(t)=\sqrt{2}\int_0^\infty h(t+s)\ee^{-s}\dd s\leq C_h\int_0^\infty \frac{\ee^{-s}}{t+s}\dd s,
\]
and therefore 
\[
He(t)=O(\abs{\log t}), \quad t\to0_+,
\qquad
He(t)=O(t^{-1}), \quad t\to\infty.
\]
Applying $\calS=I-A$, after a short calculation we obtain \eqref{eq:f11}.
\end{proof}

From the explicit form $e(t)=\sqrt{2}\ee^{-t}$ and from \eqref{eq:f11}, we see that the rank-one operators
\[
f\mapsto \jap{f,e}p
\quad\text{ and }\quad
f\mapsto \jap{f,p}e
\]
extend to  ${\calB}_\eps$ for any $\eps\in(0,\frac12)$.

\begin{lemma}
Let $H$ be a bounded positive Hankel operator and let $e(t)=\sqrt{2}e^{-t}$ and $p=\calS He$. Then the rank-two commutator identity \eqref{eq:f10} holds true on the space ${\calB}_\eps$ for any $\eps\in(0,\frac12)$.
\end{lemma}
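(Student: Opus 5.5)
Since \eqref{eq:f10} is the difference of \eqref{eq:f7} and \eqref{eq:f8}, and \eqref{eq:f8} was derived from \eqref{eq:f7} together with the identities \eqref{eq:f6}, I would verify the primitive identities directly on ${\calB}_\eps$ rather than argue by density. Density is awkward here: $C^\infty_{\comp}(\bbR_+)$ is not dense in ${\calB}_\eps$ in the sup-type norm. The plan is to check pointwise, via Fubini, that each identity holds for every $f\in{\calB}_\eps$, with the operators given by their explicit integral kernels. Lemma~\ref{lma:f3} guarantees that all the compositions appearing (such as $\calS H$, $H\calS^*$, $\calS^*H$, $H\calS$) are well defined on ${\calB}_\eps$. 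The remark after Lemma~\ref{lma.f4} guarantees that the rank-one terms $\jap{f,e}p$ and $\jap{f,p}e$ make sense for $f\in{\calB}_\eps$.

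\emph{Step 1: the identity \eqref{eq:f7} on ${\calB}_\eps$.} The proof of \eqref{eq:f7} in Theorem~\ref{thm:f2} is a direct Fubini computation. Both $HAf(t)$ and $A^*Hf(t)$ reduce to
\[
2\int_0^\infty f(u)\int_0^\infty \ee^{-s}h(t+s+u)\,\dd s\,\dd u .
\]
The interchange of integrals is justified once the corresponding integrals with $|f|$ and $|h|\le C_h/(\cdot)$ converge absolutely. For $f\in{\calB}_\eps$ this follows from the same majorants used in Lemma~\ref{lma:f3}: the inner integral is bounded by $C/(t+u)$, and $\int_0^\infty |f(u)|(t+u)^{-1}\dd u<\infty$ by \eqref{eq:b7a}. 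Hence \eqref{eq:f7} holds pointwise on ${\calB}_\eps$.

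\emph{Step 2: the identities \eqref{eq:f6} in the needed form.} The computations $A^*A=A+A^*$ and $AA^*=A+A^*-\jap{\cdot,e}e$ are again Fubini manipulations with the explicit kernels, and they remain valid for $f\in{\calB}_\eps$. The kernels are $\ee^{-|t-s|}$-type functions, and $\ee^{-|t-s|}|f(s)|$ is integrable, as seen in Lemma~\ref{lma:f3}. This gives $\calS^*\calS=I$ and $\calS\calS^*=I-\jap{\cdot,e}e$ on ${\calB}_\eps$.

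\emph{Step 3: the algebra.} With Steps 1 and 2 in hand, I would repeat the algebraic derivation of \eqref{eq:f8} from Theorem~\ref{thm:f2} verbatim, applied to $f\in{\calB}_\eps$. The only non-algebraic step is the adjoint move $\jap{H\calS^*f,e}=\jap{f,\calS He}=\jap{f,p}$. For $f\in{\calB}_\eps$ this is again a Fubini interchange in absolutely convergent integrals: $e$ decays exponentially, $He=O(|\log t|)$ near $0$, and $p$ satisfies \eqref{eq:f11}. Subtracting \eqref{eq:f7} from \eqref{eq:f8} then gives \eqref{eq:f10} on ${\calB}_\eps$.

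\emph{Main obstacle.} The main difficulty is bookkeeping: checking absolute convergence of every triple integral, especially near $t\to0$, where $f$ may grow like $t^{-1/2-\eps}$ while $h(t+s)\sim 1/(t+s)$. I would settle this once by a uniform majorant, bounding all kernels by $C(t+s)^{-1}$ or $C\ee^{-|t-s|}$ and noting that these act boundedly on $|f|$ in ${\calB}_\eps$.
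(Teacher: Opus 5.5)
Your proof is correct, but it takes a different route from the paper. Your absolute-convergence majorants are the right ones: $\int_0^\infty \ee^{-s}h(t+s+u)\,\dd s\le C/(t+u)$, together with the $\ee^{-\abs{t-u}}$ bounds. With them you re-verify the primitive identities \eqref{eq:f7} and $\calS\calS^*=I-\jap{\cdot,e}e$ pointwise on ${\calB}_\eps$. You then replay the algebra that leads to \eqref{eq:f8}, including the adjoint move $\jap{H\calS^*f,e}=\jap{f,p}$.

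The paper never touches the primitives. It sets $F$ to be the difference of the two sides of \eqref{eq:f10} applied to $f\in{\calB}_\eps$, and notes that $F\in{\calB}_\eps$ by Lemmas~\ref{lma:f3} and \ref{lma.f4}. It then pairs $F$ with a test function $\psi\in C^\infty_{\comp}(\bbR_+)$. The combined rank-two expression is formally skew-adjoint, so Fubini gives $\int_0^\infty F\,\overline{\psi}\,\dd t=-\int_0^\infty f\,\overline{\Psi}\,\dd t$, where $\Psi$ is the same combination applied to $\psi$. Theorem~\ref{thm:f2} on $L^2$ gives $\Psi=0$, hence $F=0$.

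Your remark that density is awkward is accurate for norm density in ${\calB}_\eps$. The paper sidesteps this with a weak (duality) formulation rather than approximation. Its route is shorter: it uses the $L^2$ identity as a black box and needs Fubini only against smooth compactly supported test functions. Your route involves more bookkeeping but is more transparent. As a by-product it shows that \eqref{eq:f7}, \eqref{eq:f8} and the relation $\calS\calS^*=I-\jap{\cdot,e}e$ each hold pointwise on ${\calB}_\eps$.

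A minor point: your Step 3 never uses $\calS^*\calS=I$, so that half of Step 2 can be dropped.
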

\begin{proof}
Let $f\in {\calB}_\eps$; denote
\begin{align*}
F(t)=&
((\calS^*-\calS)H)f(t)+(H(\calS^*-\calS)f)(t)
\\
&-e(t)\int_0^\infty f(s)p(s)\dd s+p(t)\int_0^\infty f(s)e(s)\dd s, \quad t>0.
\end{align*}
By the previous two lemmas, $F\in {\calB}_\eps$; we need to prove that $F=0$. 
For any $\psi\in C_{\comp}^\infty(\bbR_+)$, applying Fubini we  find that
\begin{equation}
\int_0^\infty F(t)\overline{\psi(t)}\dd t=
-\int_0^\infty f(t)\overline{\Psi(t)}\dd t,
\label{eq:f11a}
\end{equation}
where 
\[
\Psi=(\calS^*-\calS)H\psi+H(\calS^*-\calS)\psi-\jap{\psi,p}e+\jap{\psi,e}p.
\]
By Theorem~\ref{thm:f2}, we find $\Psi=0$. Thus, the left-hand side of \eqref{eq:f11a} vanishes for any $\psi\in C_{\comp}^\infty(\bbR_+)$, which forces $F=0$. 
\end{proof}

\subsection{Proof of Theorem~\ref{thm:b5}(iii) (eigenspace dimension $\leq2$)}
Assume, to get a contradiction, that for some $E>0$ the linear space of solutions to $Hf=Ef$ in ${\calB}_0$ has dimension $>2$. Then there exists a solution $f\in{\calB}_0$ such that $\jap{f,e}=\jap{f,p}=0$. From the rank-two commutator identity \eqref{eq:f10} we find 
\[
Hu=-Eu, \quad u=(\calS^*-\calS)f. 
\]
By Lemma~\ref{lma:f3} we find that $u\in {\calB}_0$. Thus, $u$ is an eigenfunction corresponding to the eigenvalue $-E$. Since $H$ is positive, $-E$ is not in the spectrum of $H$. Applying part (i) of the theorem, we find $u=0$. 

It remains to check that the kernel of $\calS^*-\calS$ in ${\calB}_0$ is trivial, i.e. $Af=A^*f$ implies $f=0$. By differentiating, we find
\begin{align*}
(Af)'(t)&=2f(t)-(Af)(t), 
\\
(A^*f)'(t)&=-2f(t)+(A^*f)(t).
\end{align*}
Combining this with $Af=A^*f$, we find 
\[
4f=(A+A^*)f.
\]
Differentiating again gives
\[
4f'=(2f-Af)+(-2f+A^*f)=(-A+A^*)f=0,
\]
and so $f=\text{const}$. From here it easily follows that $f=0$. The proof of Theorem~\ref{thm:b5}(iii) is complete. \qed

\section{Finite semi-regular case}
\label{sec:g}

Throughout this section we assume that $\mu$ is  finite and satisfies the Carleson condition \eqref{eq:Carleson}. In this case we have
\[
h(t)\leq \mu(\bbR_+)\quad\text{ and }\quad h(t)\leq C_\mu/t, \quad t>0
\]
and in particular, $h\in L^2(\bbR_+)$. 

\subsection{Some heuristics}\label{sec:g0}
Assume that $\varphi_E$ solves $H\varphi_E=E\varphi_E$ for some $E>0$. Let us differentiate the Hankel equation,
\[
\int_0^\infty h'(t+s)\varphi_E(s)\dd s=E\varphi_E'(t),
\]
and then integrate by parts:
\begin{equation}
-\int_0^\infty h(t+s)\varphi_E'(s)\dd s-\varphi_E(0)h(t)=E\varphi_E'(t). 
\label{eq:b13b}
\end{equation}
This can be written as
\begin{equation}
(H+E)\varphi_E'=-\varphi_E(0)h.
\label{eq:b13a}
\end{equation}
Observe that since $E>0$, now the operator $(H+E)$ on the left-hand side is invertible.

Assuming that our integration by parts calculation was legitimate and that $\varphi_E(0)$ is well-defined, from here we learn two things. Firstly, the space of solutions to $H\varphi_E=E\varphi_E$ is one-dimensional (or zero). Indeed, otherwise we could choose a non-zero solution with $\varphi_E(0)=0$ which would imply $\varphi_E'=0$, hence $\varphi_E=0$, which is a contradiction. 

Secondly, the unique (up to scaling) solution can be found explicitly from  \eqref{eq:b13a}. Indeed, let us, for example, normalise $\varphi_E$ by setting $\varphi_E(0)=-1$. Then we find $\varphi_E$ by 
\begin{equation}
\boxed{
\varphi_E(t)=-1+\int_0^t \varphi_{E}'(s)\dd s, \quad\text{ where }\quad \varphi_{E}'=(H+E)^{-1}h.}
\label{eq:b13}
\end{equation}

In this section we reverse the logic of this calculation. Namely, for all $E\in\bbC\setminus(-\spec(H))$ we \emph{define} the function $\varphi_E$ by \eqref{eq:b13}. Of course, thus defined, $\varphi_{E}$ is not necessarily in $L^2(\bbR_+)$. However, we will see that
\begin{itemize}
\item
for each $E>0$, the function $\varphi_E$ is a \emph{weak solution} to the Hankel equation;
\item
for $H$-spectrally almost every $E>0$, we have $\varphi_E\in {\calB}_0$ and $H\varphi_E=E\varphi_E$;
\item
$\varphi_E$ effects the diagonalisation of the operator $H$ in a way similar to \eqref{eq:a7}.
\end{itemize}

\subsection{The solution $\varphi_E$}
The following result should be considered as a preliminary (and its proof is relatively straightforward). It justifies the usage of $\varphi_E$ as the ``generalised eigenfunction'' of $H$. 

\begin{lemma}\label{prp:b10}
Let $H$ be a bounded positive Hankel operator such that the measure $\mu$ is finite, and let $\varphi_E$ be defined by \eqref{eq:b13}. Then:
\begin{enumerate}[\rm (i)]
\item
For any $E\in\bbC\setminus(-\spec(H))$, we have 
\begin{equation}
\varphi_{E}(t)=O(t^{1/2}), \quad t\to\infty.
\label{eq:b10}
\end{equation}
\item
For any $E\in\bbC\setminus(-\spec(H))$, 
the function $\varphi_E$ is a weak solution to the eigenvalue equation $H\varphi_E=E\varphi_E$ in the following sense:
\begin{equation}
\int_0^\infty \varphi_E(t)(Hf)(t)\dd t=E\int_0^\infty \varphi_E(t)f(t)\dd t, \quad \forall f\in C_{\comp,0}^\infty(\bbR_+).
\label{eq:b12}
\end{equation}
\item
For $E>0$, if $\varphi_E\in {\calB}_0$, then $H\varphi_E=E\varphi_E$ and $E\in\spec(H)$.
\item
For $E>0$, if $Hf=Ef$ with $f\in {\calB}_0$, then the limit $f(0)$ is well-defined and non-zero. Moreover, $f$ is collinear with $\varphi_E$:
\[
f=-f(0)\varphi_E.
\]
\item
For $H$-spectrally almost every $E>0$, we have $\varphi_E\in {\calB}_0$ and $H\varphi_E=E\varphi_E$. 
\end{enumerate}
\end{lemma}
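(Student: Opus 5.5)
The plan is to work throughout with $\psi_E:=\varphi_E'=(H+E)^{-1}h$. Since $\mu$ is finite, $h\in L^2(\bbR_+)$, so $\psi_E\in L^2(\bbR_+)$ for every $E\in\bbC\setminus(-\spec(H))$.

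\emph{Part (i).} This follows from Cauchy--Schwarz:
\[
\abs{\varphi_E(t)}\leq 1+\int_0^t\abs{\psi_E(s)}\dd s\leq 1+t^{1/2}\norm{\psi_E},
\]
which gives \eqref{eq:b10}.

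\emph{Part (ii).} Take $f\in C^\infty_{\comp,0}(\bbR_+)$ and write $f=g'$ with $g\in C^\infty_{\comp}(\bbR_+)$. Integrating by parts (no boundary terms appear, since $g$ has compact support in $(0,\infty)$) gives
\[
\int\varphi_E f=-\int\psi_E g.
\]
In the same way, using $(Hg')(t)=\int h(t+s)g'(s)\dd s=-(H_{1}g)(t)$ with $H_1$ the Hankel operator with kernel $h'$, together with $Hg'=-(Hg)'-h(\cdot)g(0)$, where $g(0)=0$, we get $Hf=(Hg)'$. Hence
\[
\int\varphi_E\,Hf=-\int\psi_E\,Hg.
\]
It is legitimate to integrate by parts against $\varphi_E$ here: $Hg=O(1/t)$ and $\varphi_E=O(t^{1/2})$ only give $t^{-1/2}$ at the boundary. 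To fix this, I would use $Hg(t)=\int e^{-\lambda t}(L_\mu g)(\lambda)\dd\mu$ with $(L_\mu g)(\lambda)=O(\lambda)$ near $0$ (since $\int g=0$ is not available, I need care). Alternatively, integrate over $[0,T]$ and note that the boundary term $\varphi_E(T)Hg(T)\to0$ along a sequence, since $\varphi_E(T)/T^{1/2}\to0$ along a subsequence because $\psi_E\in L^2$ gives $\int_0^T\abs{\psi_E}=o(T^{1/2})$. So the weak equation reduces to
\[
-\langle Hg,\bar\psi_E\rangle=-E\langle g,\bar\psi_E\rangle.
\]
Using self-adjointness, this is $\langle g,(H+E)\psi_E\rangle$-type bookkeeping: $\int\psi_E(H+E)g=\int g\,(H+E)\psi_E=\int g h$. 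I then need to check that the identity matches. Redoing the calculation, $\int\varphi_E(H-E)f$ becomes $-\int\psi_E(H+E)g+(\text{terms from }\varphi_E(0)=-1)$. Specifically, $Hf=(Hg)'$ yields $\int\varphi_E(Hg)'=-\int\psi_E Hg+\int h g$ when the boundary term at $0$ is $-\varphi_E(0)Hg(0)=Hg(0)=\int hg$. The total is $-\int\psi_E(H+E)g+\int hg=-\int g h+\int gh=0$. This is precisely the reversal of \eqref{eq:b13b}.

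\emph{Part (iii).} If $\varphi_E\in\calB_0$, then \eqref{eq:b7a} holds, so Lemma~\ref{lma:h1} gives $H\varphi_E=E\varphi_E$. Theorem~\ref{thm:b5}(i) then gives $E\in\spec(H)$.

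\emph{Part (iv).} Let $f\in\calB_0$ with $Hf=Ef$. By Proposition~\ref{thm:b4}(ii), $f$ is smooth and
\[
f(t)=E^{-1}\int e^{-\lambda t}(L_\mu f)(\lambda)\dd\mu.
\]
Since $f\in\calB_\eps$, $L_\mu f(\lambda)$ is finite for $\lambda>0$ and is bounded by $C\lambda^{-1/2+\eps}$ as $\lambda\to0$ and by $C\lambda^{-1/2-\eps}$ as $\lambda\to\infty$. Finiteness of $\mu$ plus Carleson then make $\int\abs{L_\mu f}\dd\mu<\infty$, so $f(0)=E^{-1}\int L_\mu f\dd\mu$ exists by dominated convergence. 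Next, $f'$ satisfies the differentiated equation; the integration by parts of Section~\ref{sec:g0} is now justified because $f$ is continuous at $0$ and $h(t+s)f(s)\to0$ as $s\to\infty$. This gives $(H+E)f'=-f(0)h$ in a suitable weighted space. To conclude $f'\in L^2$ so that $f'=-f(0)\psi_E$, I would use the weak form: $u=f'+f(0)\psi_E$ solves $(H+E)u=0$ with $u$ in a space like $\calB_\eps$ shifted by one power. Then a Shnol-type argument with $M_\alpha$ from Lemma~\ref{lma:d1}, which is norm continuous, shows $-E$ is an eigenvalue of some $M_\alpha$ close to $H$, hence $u=0$. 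I expect this to be the main obstacle: controlling $f'$ near $0$ and $\infty$ well enough, using $f'(t)=E^{-1}\int(-\lambda)e^{-\lambda t}L_\mu f\dd\mu$, which decays like $t^{-3/2+\eps}$ at infinity and is integrable at $0$. Integrating, $f=f(0)(1+\int_0^t\psi_E)=-f(0)\varphi_E$. Finally, $f(0)\neq0$, since otherwise $f'=0$ and $f=0$.

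\emph{Part (v).} By Theorem~\ref{thm:b5}(ii), for spectrally a.e.\ $E>0$ there is a solution $f\in\calB_0$. By (iv), $\varphi_E=-f/f(0)\in\calB_0$, and (iii) gives $H\varphi_E=E\varphi_E$.
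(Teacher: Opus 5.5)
Parts (i), (iii) and (v) are fine and follow the paper. In (i), your direct bound $\abs{\varphi_E(t)}\le 1+t^{1/2}\norm{\psi_E}$ is even slightly more economical than the paper's pointwise estimate of $\varphi_E'$.

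Part (ii) follows the paper's route: write $f=g'$, integrate by parts, and use $\varphi_E(0)=-1$ together with $(H+E)\psi_E=h$. However, your bookkeeping has two sign errors that happen to cancel.
\begin{itemize}
\item Since $g$ vanishes near $0$, one has $Hg'=-(Hg)'$, not $(Hg)'$.
\item The correct total is $\int\varphi_E(H-E)f=\int\psi_E(H+E)g-(Hg)(0)$. This vanishes because $\int\psi_E(H+E)g=\int g\,(H+E)\psi_E=\int gh=(Hg)(0)$.
\end{itemize}
There is also nothing to repair at infinity: $\varphi_E(T)(Hg)(T)=O(T^{-1/2})\to0$, and $(Hg)'(t)=O(t^{-2})$ makes every integral absolutely convergent. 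In (iv), similarly, it should read $f=f(0)\bigl(1-\int_0^t\psi_E\bigr)$.

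\textbf{The genuine gap is in (iv).} It sits exactly at the step you flag as the main obstacle: you never prove the estimate on $f'$ needed to turn $(H+E)f'=-f(0)h$ into $f'=-f(0)\psi_E$.

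Your Shnol detour requires $W^\alpha u\in L^2(\bbR_+)$ for arbitrarily small $\alpha>0$. This is because invertibility of $M_\alpha+E$ is only guaranteed on an uncontrolled neighbourhood of $\alpha=0$. So you essentially need $u\in\calB_0$. Being ``integrable at $0$'' or ``$\calB_\eps$ shifted by one power'' does not give this.

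Your Laplace bounds are also swapped. For $f\in\calB_\eps$ one has $\abs{(L_\mu f)(\lambda)}\le C\lambda^{-\frac12-\eps}$ as $\lambda\to0$ and $\le C\lambda^{-\frac12+\eps}$ as $\lambda\to\infty$. The conclusion $L_\mu f\in L^1(\mu)$, and hence the existence of $f(0)$, survives. But with the correct large-$\lambda$ bound, your formula for $f'$ only gives $O(t^{-\frac12-\eps})$ near $0$, which is not square integrable.

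The missing observation is the one the paper uses: once $f(0)$ exists, $f$ is bounded near $0$. Feed this into the differentiated equation, together with $\abs{h'(t)}\le\mu(\bbR_+)/t$ (from finiteness) and $\abs{h'(t)}\le 2C_\mu/t^2$ (from Carleson). This gives $f'(t)=O(\abs{\log t})$ as $t\to0$ and $f'(t)=O(t^{-\frac32+\eps})$ as $t\to\infty$, so $f'\in L^2(\bbR_+)$. The integration by parts then yields $(H+E)f'=-f(0)h$ in $L^2$. Invertibility of $H+E$ for $E>0$ gives $f'=-f(0)\psi_E$ with no Shnol argument at all. In your Laplace picture, boundedness near $0$ gives $\abs{(L_\mu f)(\lambda)}\le C/\lambda$ for $\lambda\ge1$, so $f'$ is even bounded.

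Your route can also be completed. With the correct bounds, both $f'$ and $\psi_E$ lie in $\calB_0$, so Theorem~\ref{thm:b5}(i) applied at $-E$ gives $u=0$, as in the proof of Theorem~\ref{thm:b5}(iii). As written, though, that membership is neither correctly stated nor proved.
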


\begin{remark*}
We note that \eqref{eq:b10} is insufficient for the convergence of the integral in the Hankel equation \eqref{eq:b7}, and therefore, for a general $E$, we cannot assert that $\varphi_E$ satisfies $H\varphi_E=E\varphi_E$.  On the other hand, integrating by parts, one checks that 
\[
(Hf)(t)=O(t^{-2}), \quad t\to\infty
\]
for $f\in C^\infty_{\comp,0}(\bbR_+)$, and therefore the integral in the left-hand side of \eqref{eq:b12} is well-defined. 
\end{remark*}

\subsection{The eigenfunction expansion of $H$}

For any function $f\in C^\infty_{\comp}(\bbR_+)$ and for $E>0$ we set 
\begin{equation}
\boxed{
(\Phi f)(E)=-\frac1{\sqrt{E}}\int_0^\infty \varphi_{E}(t) f(t)\dd t.}
\label{eq:g7}
\end{equation}
Here $-1/\sqrt{E}$ is a normalisation factor, which will be explained in Remark~\ref{rmk:g4} below. The main result of this section is:

\begin{theorem}\label{thm:b7}
Let $H$ be a bounded positive Hankel operator such that the measure $\mu$ in the Laplace transform representation \eqref{eq:a6} is finite. Let $\Phi$ be the integral transform defined by \eqref{eq:g7} on $C^\infty_{\comp}(\bbR_+)$. Then there exists a finite measure $\sigma$ on $(0,\infty)$ such that $\Phi$ extends to a partial isometry from $L^2(\bbR_+)$ onto $L^2(\sigma)$, with $\Ker\Phi=\Ker H$, i.e. the Parseval identity 
\begin{equation}
\int_0^\infty \abs{(\Phi f)(E)}^2\dd\sigma(E)=\norm{P_{\overline{\Ran H}}f}^2, 
\quad
f\in C^\infty_{\comp}(\bbR_+),
\label{eq:g7b}
\end{equation}
holds true, where $P_{\overline{\Ran H}}$ is the orthogonal projection onto $\overline{\Ran H}$ in $L^2(\bbR_+)$. Thus extended, $\Phi$ intertwines $H$ and the multiplication by the independent variable in $L^2(\sigma)$, i.e. 
\[
(\Phi H f)(E)=E(\Phi f)(E) \quad \text{for $\sigma$-a.e. $E>0$.}
\]
\end{theorem}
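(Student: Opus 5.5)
The plan is to obtain $\Phi$ from the BGK construction of Lemma~\ref{thm:h1}, and to use the uniqueness statement Lemma~\ref{prp:b10}(iv) to show that the BGK kernel is rank one and built from $\varphi_E$. Take $\rho$ and $F(t_1,t_2;E)$ as in Lemma~\ref{thm:h1}. The proof of Theorem~\ref{thm:b5}(ii) shows that for $\rho$-a.e.\ $E>0$ and a.e.\ $t_1$, the function $t_2\mapsto F(t_1,t_2;E)$ is a $\calB_0$-solution of $Hf=Ef$. By Lemma~\ref{prp:b10}(iv) it is then collinear with $\varphi_E$, so $F(t_1,t_2;E)=c(t_1,E)\varphi_E(t_2)$. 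The kernel $a(t_1,t_2;E)$ of the positive operator $a(E)$ is Hermitian. It follows that $c(t_1,E)=k(E)\overline{\varphi_E(t_1)}$ with $k(E)\ge0$, so that
\[
a(E)=k(E)\jap{\cdot,{w_{\log}}\overline{\varphi_E}}\,{w_{\log}}\overline{\varphi_E}
\]
is rank one. (Here $\varphi_E$ is real for real $E$, since $(H+E)^{-1}h$ is real.) Measurability of $k$ comes from $\Tr a(E)=1$, which gives $k(E)=\bigl(\int|\varphi_E|^2w_{\log}^2\bigr)^{-1}$.

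Next, I apply Lemma~\ref{thm:h1}(ii) with $g=\1_{(0,\infty)}$, noting that $g(H)=P_{\overline{\Ran H}}$ because $\Ker H=\Ker H^\perp{}^\perp$. Pairing with $\bar f$ for $f\in C^\infty_\comp(\bbR_+)$ gives
\[
\norm{P_{\overline{\Ran H}}f}^2=\int_0^\infty k(E)\Bigl|\int_0^\infty\varphi_E f\,\dd t\Bigr|^2\dd\rho(E).
\]
So the Parseval identity \eqref{eq:g7b} holds with $\dd\sigma(E)=E\,k(E)\,\dd\rho(E)$; the factor $E$ compensates the normalisation $-1/\sqrt E$ in \eqref{eq:g7}. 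With a general bounded Borel $g$ instead, the same formula gives
\[
\jap{g(H)f,f}=\int g\,|\Phi f|^2\,\dd\sigma,
\]
and polarising gives $\jap{g(H)f_1,f_2}=\int g\,\Phi f_1\overline{\Phi f_2}\,\dd\sigma$. From this, $\Phi$ extends to an isometry on $\overline{\Ran H}$ that vanishes on $\Ker H$. Moreover $\Phi(g(H)f)=g\,\Phi f$ in $L^2(\sigma)$, since both sides have the same inner products with the total set $\{\Phi f\}$ and with one another. Taking $g(E)=E$ gives the intertwining relation $\Phi Hf=E\,\Phi f$.

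For surjectivity, the range of $\Phi$ is closed, being the image of an isometry. It is also invariant under multiplication by every bounded Borel $g$, since $g\,\Phi f=\Phi g(H)f$. Such a closed subspace of $L^2(\sigma)$ has the form $L^2(\sigma|_X)$ for some Borel set $X$. If $\sigma(\bbR_+\setminus X)>0$, then every $\Phi f$ vanishes on a set of positive $\sigma$-measure. This forces $\int_{\bbR_+\setminus X}|\Phi f|^2\dd\sigma=0$, i.e.\ $\mathcal E_H(\bbR_+\setminus X)f=0$ for all $f$. That contradicts the fact that $\sigma$ is equivalent to the spectral measure $\rho$ of $H^\perp$, as soon as $Ek(E)>0$ $\rho$-a.e.; this holds because $\varphi_E\not\equiv0$. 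So $\Phi$ is onto $L^2(\sigma)$. Simplicity of the spectrum (Proposition~\ref{prp:a0}(ii)) is consistent with this picture and is in fact reproved by it.

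The main obstacle, I expect, is showing that $\sigma$ is \emph{finite}, since $\sigma$ carries the weights $k(E)$ and $E$ and Parseval alone does not bound its total mass. My first attempt is to find a vector $v$ with $\Phi v\equiv$ const. The heuristic \eqref{eq:b13a} suggests that the normalisation $\varphi_E(0)=-1$ corresponds to pairing with a ``delta at the origin''. In $L^2$ terms, $h=L_\mu^*\1$ with $\1\in L^2(\mu)$ because $\mu$ is finite, so $h\in\Ran L_\mu^*$ is admissible. I would compute $\Phi h(E)$ using the weak equation \eqref{eq:b12} together with the identity $(HF)'=-HF'$ for $F\in C^\infty_\comp$ with $F(0)=0$. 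I expect $\Phi h(E)=c\sqrt E$ for some constant $c$. Then the Parseval identity, extended to $h$ by approximation with Fatou's lemma, gives $\int E\,\dd\sigma<\infty$. Combined with $\sigma((\delta,\infty))<\infty$ and the behaviour of $\varphi_E$ near $E=0$, this should give $\sigma(\bbR_+)<\infty$. If that route fails, I would instead test Parseval on $f_n=n\1_{[0,1/n]}$ smoothed to lie in $C^\infty_\comp$. For these, $\Phi f_n(E)\to1/\sqrt E$ pointwise by continuity of $\varphi_E$ at $0$, and I would bound $\sup_n\norm{P_{\overline{\Ran H}}f_n}$ through $\norm{L_\mu f_n}_{L^2(\mu)}\le\mu(\bbR_+)^{1/2}$ using $H=L_\mu^*L_\mu$ (since $|L_\mu f_n|\le1$). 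The gap here is that this controls $\jap{Hf_n,f_n}$ rather than $\norm{Pf_n}^2$; plausibly this yields finiteness of $\int \dd\sigma$ once the normalisation is matched with $E\,\dd\sigma$, and I would adjust the normalisation accordingly.
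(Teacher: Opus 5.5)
Your route is correct in substance and genuinely different from the paper's.

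\textbf{The paper's route.} The paper builds the partial isometry abstractly as $\Phi_*=UT_\mu^*$. Here $U$ is the unitary from the Pushnitski--Treil theorem (cyclicity of $\1_\mu$ for $G_\mu$, with $UX_\mu=G_\sigma U$). It identifies $\Phi_*g'$ on zero-average test functions through $(UX_\mu f)(E)=\jap{f,(G_\mu+E)^{-1}\1_\mu}_\mu$. It then needs a Hilbert--Schmidt argument plus the weak equation to remove an additive constant $C_E$ before reaching all of $C^\infty_\comp(\bbR_+)$. In that approach surjectivity and finiteness come for free: $\sigma$ is the spectral measure of the cyclic vector $\1_\mu$, so $\sigma(\bbR_+)=\mu(\bbR_+)$, and one also gets $\Phi v_\mu=1$.

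\textbf{Your route.} You feed the uniqueness statement of Lemma~\ref{prp:b10}(iv) into the decomposition of Lemma~\ref{thm:h1}. The slices $F(t_1,\cdot\,;E)$ are strong $\calB_0$-solutions, hence exact multiples of $\varphi_E$. So $a(E)$ is rank one, and Parseval holds at once on all of $C^\infty_\comp(\bbR_+)$ with $\dd\sigma=E\,k(E)\,\dd\rho$. No constant $C_E$ ever appears, nothing from \cite{PuTreil} is used, and simplicity of the spectrum is an output rather than an input. The price is that $\sigma$ is only implicit, so finiteness must be proved by hand.

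\textbf{Finiteness of $\sigma$.} This is the one step you leave open, and it is easier than you fear.
\begin{itemize}
\item Your first route does not give it. $\Phi h(E)=\sqrt{E}$ only yields $\int E\,\dd\sigma(E)=\norm{h}^2$, which does not control $\sigma$ near $E=0$; indeed $h=H^{1/2}v_\mu$ carries one factor $E^{1/2}$ too many. The appeal to ``the behaviour of $\varphi_E$ near $E=0$'' is not an argument.
\item Your second route works as it stands. There is no need to bound $\norm{P_{\overline{\Ran H}}f_n}$ and no change of normalisation.
\end{itemize}
Take $f_n(t)=n\chi(nt)$ with $0\le\chi\in C^\infty_\comp(\bbR_+)$, $\operatorname{supp}\chi\subset(0,1)$, $\int\chi=1$. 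Then:
\begin{itemize}
\item Your identity $\jap{g(H)f,f}=\int g\abs{\Phi f}^2\dd\sigma$ with $g(E)=E$ gives $\jap{Hf_n,f_n}=\int E\abs{(\Phi f_n)(E)}^2\dd\sigma(E)$.
\item Since $\varphi_E$ is continuous on $[0,\infty)$ with $\varphi_E(0)=-1$, we have $E\abs{(\Phi f_n)(E)}^2\to1$ for every $E>0$.
\item $\jap{Hf_n,f_n}=\norm{L_\mu f_n}^2_{L^2(\mu)}\le\mu(\bbR_+)$, since $0\le L_\mu f_n\le\int f_n=1$.
\end{itemize}
Fatou then yields $\sigma(\bbR_+)\le\mu(\bbR_+)<\infty$.

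\textbf{Two minor points.} First, $k(E)>0$ comes from $\Tr a(E)=1$, not merely from $\varphi_E\not\equiv0$. Second, in the intertwining step $\{\Phi f\}$ is not yet known to be total. However, your ``with one another'' computation suffices by itself: the polarised identity with $\bar g$ gives $\norm{\Phi g(H)f-g\,\Phi f}^2_{L^2(\sigma)}=0$.
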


The theorem says that  $H^\perp$ is unitarily equivalent to the multiplication by the independent variable in $L^2(\sigma)$, i.e. $\sigma$ is a \emph{spectral measure} of $H^\perp$. Moreover, $\Phi$ gives a concrete representation for the eigenfunction expansion of $H$. 

\subsection{The cyclic element $v_\mu$}
Here we explain the meaning of the measure $\sigma$ in Theorem~\ref{thm:b7}. 
The existence of $\sigma$ will be provided by the results of \cite{PuTreil}, which we recall here. Of importance for us is that the element $v_\mu\in L^2(\bbR_+)$, \emph{formally} defined by 
\[
v_\mu=H^{1/2}\delta
\]
(where $\delta$ is the delta-function at the origin) is cyclic for $H^\perp$. More precisely, the following statement is proved in \cite{PuTreil}.

\begin{proposition}\cite{PuTreil}\label{thm:g-PT1}
Let $H$ be a positive bounded Hankel operator  such that the measure $\mu$ in the representation \eqref{eq:a6} is finite. 
Then there exists a unique element $v_\mu\in\overline{\Ran H}$ such that for any $g\in C^\infty_{\comp}(\bbR_+)$ we have 
\[
\jap{v_\mu,H^{1/2}g}=\int_{0}^{\infty}h(t)\overline{g(t)}\dd t.
\]
The element $v_\mu$ is cyclic for $H^\perp$ and $\norm{v_\mu}^2=\mu(\bbR_+)$. 
\end{proposition}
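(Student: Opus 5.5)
The statement is Proposition~\ref{thm:g-PT1}, which the paper imports from \cite{PuTreil}. I would prove it directly using the factorisation \eqref{eq:g3a}, $H=L_\mu^*L_\mu$.

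\textbf{Existence and norm.} Write the polar decomposition $L_\mu=U H^{1/2}$. Here $U$ is a partial isometry from $\overline{\Ran H}$ onto $\overline{\Ran L_\mu}\subset L^2(\mu)$, with $\Ker U=\Ker H$. Since $\mu$ is finite, the constant function $\1$ lies in $L^2(\mu)$. Define
\[
v_\mu=U^*\1 .
\]
Then $v_\mu\in\Ran U^*=\overline{\Ran H}$. For $g\in C^\infty_{\comp}(\bbR_+)$ we compute
\[
\jap{v_\mu,H^{1/2}g}=\jap{\1,UH^{1/2}g}_{L^2(\mu)}=\jap{\1,L_\mu g}_{L^2(\mu)}=\int_0^\infty\!\!\int_0^\infty \ee^{-t\lambda}\overline{g(t)}\dd t\,\dd\mu(\lambda)=\int_0^\infty h(t)\overline{g(t)}\dd t .
\]
For uniqueness: $H^{1/2}C^\infty_{\comp}$ is dense in $\overline{\Ran H^{1/2}}=\overline{\Ran H}$, so any two elements of $\overline{\Ran H}$ with this property coincide.

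For the norm we need $\1\in\overline{\Ran L_\mu}$, since then $\norm{U^*\1}=\norm{\1}_{L^2(\mu)}=\mu(\bbR_+)^{1/2}$. I would show that $\overline{\Ran L_\mu}=L^2(\mu)$. If $\phi\in L^2(\mu)$ is orthogonal to $\Ran L_\mu$, then $L_\mu^*\phi=\int \ee^{-t\lambda}\phi(\lambda)\dd\mu(\lambda)$ vanishes for all $t>0$. This is the Laplace transform of the finite complex measure $\phi\,\dd\mu$, which is integrable because $\phi\in L^2(\mu)$ and $\mu$ is finite. By injectivity of the Laplace transform on finite measures, $\phi=0$ $\mu$-a.e. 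So $U$ is unitary from $\overline{\Ran H}$ onto $L^2(\mu)$, and the norm identity follows.

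\textbf{Cyclicity.} Conjugating by $U$, the operator $H^\perp=H^{1/2}U^*UH^{1/2}$ restricted to $\overline{\Ran H}$ becomes $UH^\perp U^*=L_\mu L_\mu^*$ on $L^2(\mu)$, and $v_\mu$ becomes $\1$. So it suffices to show that $\1$ is cyclic for $T=L_\mu L_\mu^*$. This operator has kernel $(\lambda+\lambda')^{-1}$ on $L^2(\mu)$: indeed $L_\mu L_\mu^*\phi(\lambda)=\int\phi(\lambda')/(\lambda+\lambda')\dd\mu(\lambda')$.

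This is the main obstacle, because the closed span of $\{T^n\1\}$ is not obviously everything. My plan is to use the commutation structure. Let $X$ be multiplication by $\lambda$ on $L^2(\mu)$, possibly unbounded but fine on a suitable core. Then
\[
XT+TX=\jap{\cdot,\1}\1,
\]
since $(\lambda+\lambda')/(\lambda+\lambda')=1$. This is the rank-one Lyapunov equation \eqref{eq:a9a}.

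Let $M$ be the cyclic subspace generated by $\1$ under $T$, and suppose $\phi\perp M$. Then $\phi\perp f(T)\1$ for every bounded Borel $f$. The standard argument, as in \cite{PuTreil} (going back to Megretskii--Peller--Treil), shows that $M^\perp$ is invariant under $X$, or under a bounded function of it such as $(X+1)^{-1}$. One obtains this from the identity $\ee^{-sX}T\ee^{-sX}$-type resolvent formulas, for instance
\[
T=\int_0^\infty \ee^{-sX}\jap{\cdot,\1}\1\,\ee^{-sX}\dd s,
\]
which is valid because the kernel is $\int_0^\infty\ee^{-s(\lambda+\lambda')}\dd s$.

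From this representation, $T$ is the Gram operator of the family $\{\ee^{-s\lambda}\}_{s>0}$, so $\overline{\Ran T}$ is the closed span of these exponentials. One then shows that $\ee^{-s\lambda}$ lies in $M$ for every $s$, either by expressing it through functions of $T$ applied to $\1$ via the above identity, or by a Lyapunov spectral argument: the restriction of $T$ to $M^\perp$ satisfies $XT+TX=0$ there with $T\ge0$, which forces $T|_{M^\perp}=0$, and $\Ker T=\{0\}$ by the Laplace-injectivity argument above. Hence $M^\perp=\{0\}$.

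Making this invariance rigorous, with $X$ unbounded and $T$ only having a well-controlled form domain, is the delicate step. Here I would follow the approach of \cite{PuTreil} via the resolvent $(X+1)^{-1}$.
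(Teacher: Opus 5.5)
Your existence, uniqueness and norm arguments are correct and match the framework the paper relies on. Your $U^*$ is the isometry $T_\mu$ of \eqref{eq:g4}, so $v_\mu=T_\mu\1_\mu$, which is the identity the paper quotes from \cite[Lemma~6.1]{PuTreil}. Your Laplace-injectivity step is the fact $\Ker L_\mu^*=\{0\}$. Your reduction of cyclicity to cyclicity of $\1_\mu$ for $G_\mu$ is exactly Proposition~\ref{thm:PuTr-2}, which the paper also imports.

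The gap is that this last cyclicity, which is the real content of the statement, is not proved. The claim that $M^\perp$ is invariant under $X$ or $(X+1)^{-1}$ is true only a posteriori, because $M^\perp=\{0\}$, and you give no way to derive it. The Lyapunov identity does not yield it. For $\phi\in M^\perp\cap\Dom X$ it only relates $\jap{X\phi,G_\mu\1}_\mu$ to $-\jap{XG_\mu\phi,\1}_\mu$. Conjugating by $(X+1)^{-1}$ puts $(X+1)^{-1}\1$ rather than $\1$ into the rank-one term, and that element is not known to lie in $M$. The Gram-operator paragraph is circular: showing $\ee^{-s\lambda}\in M$ for all $s$ is the claim itself.

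Your final idea is the right one, but as written it is incomplete. $X$ does not act on $M^\perp$, the step ``anticommuting with $X$ forces $T|_{M^\perp}=0$'' is not argued, and the unbounded case is deferred. The eigenvector version of this argument used in the proof of Lemma~\ref{lma:b0} does not suffice here, since $G_\mu$ can have continuous spectrum (e.g.\ for Mehler's operator).

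The missing observation is that for finite $\mu$ the operator $B:=XG_\mu$ is bounded on $L^2(\mu)$. Its kernel $\lambda/(\lambda+\lambda')$ lies in $[0,1]$, so $B$ is Hilbert--Schmidt, and everything can be done with bounded operators.

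\begin{enumerate}[\rm (1)]
\item $B^*$ has kernel $\lambda'/(\lambda+\lambda')$, so $B+B^*=\jap{\cdot,\1}_\mu\1$.
\item Since $B^*=G_\mu X$ on $\Dom X\supset\Ran G_\mu$, we get $G_\mu B=G_\mu XG_\mu=B^*G_\mu$.
\item Let $P$ be the projection onto $M^\perp$. It commutes with $G_\mu$ and $P\1=0$. On $M^\perp$ put $G_1=PG_\mu P\ge0$ and $B_1=PBP$.
\item Then $B_1^*=-B_1$ and $G_1B_1=B_1^*G_1=-B_1G_1$.
\item Hence $G_1^2B_1=B_1G_1^2$, so $B_1$ commutes with $G_1=(G_1^2)^{1/2}$. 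Combined with (4), this gives $G_1B_1=0$.
\end{enumerate}

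For $\phi\in M^\perp$ this yields
\[
0=\jap{G_\mu XG_\mu\phi,\phi}_\mu=\int_0^\infty\lambda\,\abs{(G_\mu\phi)(\lambda)}^2\dd\mu(\lambda).
\]
The Carleson condition excludes an atom of $\mu$ at $0$, so $G_\mu\phi=0$. Since $\Ker G_\mu=\Ker L_\mu^*=\{0\}$, it follows that $\phi=0$. This closes the proof using only bounded operators.
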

We will see that the measure $\sigma$ of Theorem~\ref{thm:b7} is the spectral measure of $H^\perp$ corresponding to the element $v_\mu$, i.e. 
\[
\sigma(\Delta)=\jap{{\mathcal E}_{H_\mu^\perp}(\Delta)v_\mu,v_\mu}, 
\quad
\Delta\subset \bbR_+.
\]

\begin{remark}\label{rmk:g4}
The normalisation factor $-1/\sqrt{E}$ in the definition \eqref{eq:g7} of $\Phi$ ensures that the target space of $\Phi$ is $L^2(\sigma)$. Without this factor, the target space would have been $L^2(\widetilde{\sigma})$, with 
\[
\dd\widetilde{\sigma}(E)=\frac{\dd\sigma(E)}{E}, \quad E>0.
\]
While $\sigma$ is the spectral measure of $H$ corresponding to the element $v_\mu$, in the same way the measure $\widetilde{\sigma}$ is the spectral measure of $H$ corresponding to the ``element'' $\delta$, i.e. the delta-function at the origin. 

The minus sign in \eqref{eq:g7} is of no importance; we choose the minus sign in order to satisfy the relation 
\[
\Phi v_\mu=1.
\]
\end{remark}

\subsection{Example: Mehler's operator}
The Hankel operator with the kernel function 
\[
h(t)=1/(2+t)=\int_0^\infty \ee^{-t\lambda}\dd\mu(\lambda), \quad
\dd\mu(\lambda)=\ee^{-2\lambda}\dd\lambda
\]
is known as  \emph{Mehler's operator}. In fact, Mehler \cite{Mehler} discovered the formula \cite[Eq. (14.20.13)]{DLMF}
\begin{equation}
\int_0^\infty 
\frac{P_{-\frac12+\ii k}(1+s)}{2+t+s}\dd s=\pi\sech(\pi k) P_{-\frac12+\ii k}(1+t), \quad 
t>0, \quad k>0,
\label{eq:g21}
\end{equation}
where $P_\nu$ is the Legendre function; in the particular case $\nu=\tfrac12+\ii k$, Legendre functions are known as conical functions.

We pause to recall the necessary properties of the Legendre function, see \cite[Chapter 3]{BE} and \cite[Section 14]{DLMF}. 
The Legendre function $P_\nu$ is an entire function of $\nu$ and can be represented as
\[
P_\nu(x)=\frac1\pi \int_0^\pi \left(x+\sqrt{x^2-1}\cos y\right)^\nu\dd y, \quad x>1.
\]
From here it can be seen that $P_\nu(x)$ is holomorphic in $x$ near $x=1$ (i.e. near $t=0$ when $x=1+t$) and satisfies 
\begin{equation}
P_\nu(1)=1
\label{eq:g29}
\end{equation}
for all $\nu$. 
Moreover, one has \cite[Formula (14.8.2)]{DLMF}
\begin{equation}
P_{\nu}(x)\sim \frac{\Gamma(\nu+\tfrac12)}{\sqrt{\pi}\Gamma(\nu+1)}(2 x)^{\nu}, \quad x\to\infty, \quad \Re \nu>-\tfrac12.
\label{eq:g30}
\end{equation}
The asymptotics of the conical functions $P_{\nu}(x)$, $\nu=-\tfrac12+\ii k$, as $x\to\infty$  involves a linear combination of two terms with $x^{-\frac12\pm\ii k}$, but for us the estimate 
\begin{equation}
P_{-\frac12+\ii k}(x)=O(x^{-\frac12}), \quad x\to\infty,
\label{eq:g31}
\end{equation}
will suffice.

We come back to Mehler's operator. Formula \eqref{eq:g21} diagonalises Mehler's operator and shows that its spectrum is purely a.c., has multiplicity one and coincides with the interval $[0,\pi]$, see e.g. \cite{Ya1}. Note that $P_{-\frac12-\ii k}=P_{-\frac12+\ii k}$, i.e. changing the sign of $k$ does not give a new solution to \eqref{eq:g21} -- compare with \eqref{eq:a6a}.

From \eqref{eq:g31} it follows that the conical functions $P_{-\frac12+\ii k}(1+t)$ are in $\calB_0$. 
From here and Lemma~\ref{prp:b10}(iv), using \eqref{eq:g29}, we find 
\begin{equation}
\varphi_E(t)=-P_{-\frac12+ik}(1+t),\quad E=\pi\sech(\pi k), \quad k>0. 
\label{eq:g23}
\end{equation}
Using the analytic continuation in $\nu=-\frac12+\ii k$, one can extend \eqref{eq:g23} to other values of $E$. We confine ourselves to two cases:

Case 1: $k=-\ii\kappa$, $\kappa\in(0,\tfrac12)$, i.e. $\nu=-\tfrac12+\kappa\in (-\tfrac12,0)$. This corresponds to \emph{positive} values $E=\pi\sec(\pi\kappa)>\pi$ outside the spectrum. Formula \eqref{eq:g30} shows that in this case $\varphi_E$ is not in $\calB_0$. 

Case 2: $k=-\ii\kappa$, $\kappa\in(\tfrac12,1)$, i.e. $\nu=-\tfrac12+\kappa\in (0,\tfrac12)$. This corresponds to \emph{negative} values $E=\pi\sec(\pi\kappa)<-\pi$. 
Using \eqref{eq:g30} again, we see that in this case the function $\varphi_E(t)$ is even ``worse'', as it grows at infinity:
\begin{equation}
\varphi_E(t)\to\infty, \quad t\to\infty, \quad E<-\pi.
\label{eq:g28}
\end{equation}

Let us discuss Parseval's formula \eqref{eq:g7b}. The unitarity of the Mehler-Fock transform (see e.g. \cite[Section 3.4]{Yakubovich}) can be written as
\[
\norm{f}^2
=
\int_0^\infty k\tanh(\pi k)\Abs{\int_0^\infty P_{-\frac12+\ii k}(1+t)f(t)\dd t}^2\dd k.
\]
Using $\varphi_E$ on the spectrum, this can be rewritten as
\[
\norm{f}^2
=
\int_0^\infty k\tanh(\pi k)E(k)\abs{\widetilde{f}(E(k))}^2\dd k
=
\frac1{\pi^2}
\int_0^\pi\abs{\widetilde{f}(E)}^2 \sech^{-1}(E/\pi)\dd E,
\]
where 
\[
\widetilde{f}(E)=-\frac1{\sqrt{E}}\int_0^\infty P_{-\frac12+\ii k}(1+t)f(t)\dd t.
\]
Thus, the corresponding measure $\sigma$ in this case is 
\[
\dd\sigma(E)=\frac1{\pi^2}\sech^{-1}(E/\pi)\dd E,
\]
supported on $[0,\pi]$. 

In the rest of this section we prove Lemma~\ref{prp:b10} and Theorem~\ref{thm:b7}. 

\subsection{Proof of Lemma~\ref{prp:b10}}
Part (i): by the definition of $\varphi_{E}'$, it satisfies \eqref{eq:b13b}. Since $\varphi_{E}'\in L^2(\bbR_+)$, from here by Cauchy-Schwarz we find 
\[
\varphi_{E}'(t)=O(t^{-1/2}), \quad t\to\infty,
\]
and so \eqref{eq:b10} follows. 

Part (ii):
write $f=g'$ with $g\in C_{\comp}^\infty(\bbR_+)$. Integrating by parts, we find 
$Hf=-(Hg)'$. Using this, we obtain 
\begin{align*}
\int_0^\infty {\varphi_{E}(t)}(Hg')(t)\dd t
&=
-\int_0^\infty {\varphi_{E}(t)}(Hg)'(t)\dd t
\\
&={\varphi_{E}(0)}(Hg)(0)
+\int_{0}^{\infty}{\varphi_{E}'(t)}(Hg)(t)\dd t
\\
&=-\jap{g,h}+\int_{0}^{\infty}{(H\varphi_{E}')(t)}g(t)\dd t
\\
&=
-\jap{g,h}-\int_{0}^\infty(E{\varphi_{E}'(t)}-h(t))g(t)\dd t
\\
&=
-\jap{g,h}-E\int_{0}^\infty {\varphi_{E}'(t)}g(t)\dd t +\jap{g,h}
\\
&=E\int_{0}^\infty {\varphi_{E}(t)}g'(t)\dd t,
\end{align*}
where we have used the definition of $\varphi_E$ and the normalisation $\varphi_E(0)=-1$. 

Part (iii): 
by Lemma~\ref{lma:h1}, we find that $\varphi_E$ satisfies $H\varphi_E=E\varphi_E$.  
Thus, by Theorem~\ref{thm:b5}(i), we have $E\in\spec(H)$.

Part (iv): 
Let $Hf=Ef$ with a non-zero $f\in {\calB}_0$. We need to justify the reasoning of Section~\ref{sec:g0}. 

Let us discuss the properties of $f(t)$. 
Recall that the kernel function $h$ satisfies \eqref{eq:b1}. From the Hankel equation we see that the limit $f(0)=f(0_+)$ exists.
Differentiating the Hankel equation,
\begin{equation}
\int_0^\infty h'(t+s)f(s)\dd s=E f'(t), \quad t>0,
\label{eq:e3}
\end{equation}
by \eqref{eq:b7b} we find that $f'$ is bounded and
\begin{equation}
\abs{f'(t)}\leq C
\left( 
\int_0^1 (s+t)^{-2}\dd s
+ \int_1^\infty (s+t)^{-2}s^{-\frac12+\eps}\dd s
\right)
=O(t^{-\frac{3}{2}+\eps}), \quad t\to\infty,
\label{eq:e3a}
\end{equation}
with any $\eps\in (0,\frac12)$. 
Similarly, by \eqref{eq:b1} we find 
\begin{equation}
\abs{f'(t)}\leq C
\left( 
\int_0^1 (s+t)^{-1}\dd s
+ \int_1^\infty (s+t)^{-1}s^{-\frac12+\eps}\dd s
\right)
=O(\abs{\log t}), \quad t\to0_+.
\label{eq:e3b}
\end{equation}
From \eqref{eq:e3a} and \eqref{eq:e3b} we find that $f'\in L^2(\bbR_+)$. 
Now we can integrate by parts in \eqref{eq:e3}, which yields
\begin{equation}
(H+E)f'=-f(0)h.
\label{eq:e4a}
\end{equation}
If $f(0)=0$, from here we find $f'=0$, hence $f=0$; this is a contradiction. 
If $f(0)\not=0$, we can renormalise so that $f(0)=-1$. Then  \eqref{eq:e4a} gives $f'=(H+E)^{-1}h=\varphi_E'$, and so $f=\varphi_E+C$. But also $f(0)=\varphi_E(0)=-1$, so $f=\varphi_E$, as claimed.

Part (v): by Theorem~\ref{thm:b5}(ii), for $H$-spectrally almost every $E>0$, there exists $f\in {\calB}_0$ with $Hf=Ef$. By the already proven part (iii), $f$ is then collinear with $\varphi_E$. 
\qed

\subsection{Operators in $L^2(\mu)$}
In the rest of this section we use the results and the technique of \cite{PuTreil}, which utilises analysis in the Hilbert space $L^2(\mu)=L^2(\bbR_+,\dd\mu)$. We denote the inner product of elements $f$ and $g$ in $L^2(\mu)$ by $\jap{f,g}_\mu$. We denote by $\1_\mu$ is the function on $\bbR_+$ identically equal to $1$. Since $\mu$ is finite, we have $\1_\mu\in L^2(\mu)$. Below $X_\mu$ is the operator of multiplication by the independent variable in $L^2(\mu)$:
\[
X_\mu f(x)=xf(x), 
\]
and $G_\mu$ is the integral operator in $L^2(\mu)$ given by 
\begin{equation}
\boxed{
(G_\mu f)(x)=\int_0^\infty \frac{f(y)}{x+y}\dd\mu(y). }
\label{eq:g0b}
\end{equation}
We have the important factorisation
\begin{equation}
\boxed{ G_\mu=L_\mu L_\mu^*,}
\label{eq:g3b}
\end{equation}
where $L_\mu: L^2(\bbR_+)\to L^2(\mu)$ is the Laplace transform \eqref{eq:g2}, 
see \cite[Lemma~5.1]{PuTreil}. In particular, $G_\mu$ is bounded if and only if $\mu$ satisfies the Carleson condition \eqref{eq:Carleson}.

We recall (see \eqref{eq:g3a}) that 
\[
H_\mu=L_\mu^*L_\mu.
\]
Let us compare this to \eqref{eq:g3b} and recall the well-known general operator theoretic statement:
\begin{proposition}
Let $L$ be a bounded operator in a Hilbert space. Then the operators $L^*L|_{(\Ker L)^\perp}$ and $LL^*|_{(\Ker L^*)^\perp}$ are unitarily equivalent.
\end{proposition}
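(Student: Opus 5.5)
We will deduce the statement from the polar decomposition of $L$. Throughout, $L$ is bounded on a Hilbert space (or between two Hilbert spaces, which is the case needed for $L_\mu$). Write $\abs{L}=(L^*L)^{1/2}$.

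First we build a partial isometry $U$ with $L=U\abs{L}$. On $\Ran\abs{L}$ set $U(\abs{L}f)=Lf$. This map is isometric because
\[
\norm{\abs{L}f}^2=\jap{L^*Lf,f}=\norm{Lf}^2 .
\]
In particular $U$ is well-defined, and it extends by continuity to an isometry from $\overline{\Ran\abs{L}}$ onto $\overline{\Ran L}$. We set $U=0$ on $(\Ran\abs{L})^\perp$.

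Next we identify the initial and final spaces of $U$. Since $\abs{L}$ is self-adjoint, $\overline{\Ran\abs{L}}=(\Ker\abs{L})^\perp$. Moreover $\Ker\abs{L}=\Ker L$, since $\norm{\abs{L}f}=\norm{Lf}$ for all $f$. The final space is $\overline{\Ran L}=(\Ker L^*)^\perp$. Hence $U$ restricts to a unitary map
\[
U_0:(\Ker L)^\perp\to(\Ker L^*)^\perp .
\]

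We then compute $LL^*$. From $L=U\abs{L}$ and $L^*=\abs{L}U^*$ we get
\[
LL^*=U\abs{L}^2U^*=U(L^*L)U^* .
\]
The subspace $(\Ker L)^\perp$ reduces $L^*L$, because $\Ker(L^*L)=\Ker L$ and $L^*L$ is self-adjoint. Also $U^*$ maps $(\Ker L^*)^\perp$ unitarily onto $(\Ker L)^\perp$. Therefore
\[
LL^*|_{(\Ker L^*)^\perp}=U_0\,\bigl(L^*L|_{(\Ker L)^\perp}\bigr)\,U_0^{*},
\]
which is the claimed unitary equivalence.

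The only point requiring care is the well-definedness and isometry of $U$ on $\Ran\abs{L}$, and that follows directly from the norm identity above. Everything else is bookkeeping of kernels and ranges.
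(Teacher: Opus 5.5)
Your proof is correct and follows essentially the paper's route. The paper cites the statement as well known and then uses exactly this polar-decomposition argument, written for $L_\mu^*=T_\mu G_\mu^{1/2}$ with the isometry $T_\mu$, to obtain $H_\mu=T_\mu G_\mu T_\mu^*$. You decompose $L$ instead of $L^*$, which is the same argument with the roles of $L$ and $L^*$ interchanged, and your two-space setting is what is needed for $L_\mu$.
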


It is not difficult to see that $\Ker L_\mu^*=\{0\}$, see e.g. \cite[Theorem~3.1(iv)]{PuTreil0}.
Thus, we see that the operators $G_\mu$ and $H_\mu^\perp$ are unitarily equivalent. This explains our interest in $G_\mu$.

What operator effects the unitary equivalence between $G_\mu$ and $H_\mu^\perp$? To answer this question, let us write the polar decomposition of $L_\mu^*$ as 
\begin{equation}
L_\mu^*=T_\mu G_\mu^{1/2},
\quad
L_\mu=G_\mu^{1/2}T_\mu^*,
\label{eq:g4}
\end{equation}
where $T_\mu$ is an isometry from $L^2(\mu)$ to $L^2(\bbR_+)$ with $\Ran T_\mu=\overline{\Ran H}$. From here we find that
\begin{equation}
H_\mu=T_\mu G_\mu T_\mu^*.
\label{eq:g4aa}
\end{equation}
We will come back to this relation in the next subsection. But first we need to establish some properties of  the operator $G_\mu$.

\begin{proposition}\cite[Theorem~5.8]{PuTreil}\label{thm:PuTr-2}
Let $\mu$ be a finite Carleson measure on $\bbR_+$. Then the element $\1_\mu$ is cyclic for the operator $G_\mu$. Denote by $\sigma$ the spectral measure for $G_\mu$ corresponding to the element $\1_\mu$: 
\[
\sigma(\Delta)=\jap{{\mathcal E}_{G_\mu}(\Delta)\1_\mu,\1_\mu}_{\mu},
\quad
\Delta\subset\bbR.
\]
Then there exists a unitary operator $U: L^2(\mu)\to L^2(\sigma)$ such that 
\begin{equation}
UG_\mu=X_\sigma U, \quad 
UX_\mu=G_\sigma U, \quad
U\1_\mu=\1_\sigma.
\label{eq:g7a}
\end{equation}
\end{proposition}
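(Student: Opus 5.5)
The plan rests on the rank-one Lyapunov identity in $L^2(\mu)$,
\begin{equation*}
X_\mu G_\mu+G_\mu X_\mu=\jap{\cdot,\1_\mu}_\mu\1_\mu ,
\end{equation*}
which is immediate from \eqref{eq:g0b}: the kernel of the left-hand side is $(x+y)/(x+y)=1$. Because $\mu$ is finite, $\1_\mu\in L^2(\mu)$, so the right-hand side is a bounded rank-one operator. The operator $X_\mu$ may be unbounded, so I will never use the identity in this raw form. Instead I will use the resolvent version: for $z>0$, multiply on the left by $(G_\mu+z)^{-1}$ and on the right by $(G_\mu-z)^{-1}$ (when $z\notin\spec G_\mu$), or better, sandwich with the bounded operators $(X_\mu+1)^{-1}$. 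Every identity is then checked on $f,g$ in a dense set of bounded functions supported in $[a,b]\subset(0,\infty)$, by Fubini. I also record two facts. First, $\Ker G_\mu=\Ker L_\mu^*=\{0\}$. Second, $\mu(\{0\})=0$ by the Carleson condition, so $X_\mu$ is injective.

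\emph{Step 1: cyclicity of $\1_\mu$.} Let $N$ be the orthogonal complement of the cyclic subspace of $G_\mu$ generated by $\1_\mu$. Then $N$ reduces $G_\mu$, and $N\perp\1_\mu$. Let $P_N$ be the orthogonal projection onto $N$. Apply the Lyapunov identity in quadratic-form sense to $u,v\in N\cap\Dom(X_\mu^{1/2})$. The rank-one term vanishes because $\jap{u,\1_\mu}_\mu=0$. So the nonnegative form $Y=P_NX_\mu P_N$ satisfies
\[
G_NY+YG_N=0,\qquad G_N=G_\mu|_N\ge0 .
\]
To avoid domain problems I will work with a bounded regularisation of $X_\mu$ and rerun the same algebra. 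From $G_NY=-YG_N$ we get $G_N^2Y=YG_N^2$, hence $G_N$ commutes with $Y$ by the functional calculus. Then $G_NY=-G_NY$, so $G_NY=0$. Since $\Ker G_N=\{0\}$, this gives $Y=0$. Hence $\jap{X_\mu u,u}_\mu=0$ for $u\in N$. Since $X_\mu$ is injective and nonnegative, $u=0$, and $\1_\mu$ is cyclic.

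\emph{Step 2: construction of $U$.} Cyclicity gives the standard unitary $U:g(G_\mu)\1_\mu\mapsto g$ from $L^2(\mu)$ onto $L^2(\sigma)$. By construction $UG_\mu=X_\sigma U$ and $U\1_\mu=\1_\sigma$. Also $\sigma(\{0\})=0$ because $\Ker G_\mu=\{0\}$, and $\sigma$ is finite with $\sigma(\bbR_+)=\mu(\bbR_+)$.

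\emph{Step 3: identifying $UX_\mu U^*$.} Put $B=UX_\mu U^*$, a nonnegative self-adjoint operator. Transporting the Lyapunov identity through $U$ gives
\[
X_\sigma B+BX_\sigma=\jap{\cdot,\1_\sigma}_\sigma\1_\sigma .
\]
I will then show that this equation has the unique solution $G_\sigma$. Uniqueness: in the bounded-regularised form, a solution $D$ of $X_\sigma D+DX_\sigma=0$ has matrix elements $\jap{D\1_{\Delta_1},\1_{\Delta_2}}_\sigma$ that vanish for intervals $\Delta_1,\Delta_2\subset(0,\infty)$, because $x+y>0$ there. Since $\sigma(\{0\})=0$, this forces $D=0$. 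Existence: one checks directly that the kernel $1/(x+y)$ solves the equation. But this requires knowing that $G_\sigma$ is a well-defined (bounded) operator, which is equivalent to $\sigma$ being Carleson. I would instead get this for free from uniqueness. The form $\jap{B f,g}_\sigma$, for $f,g$ supported in compact subsets of $(0,\infty)$, is forced by the equation to equal $\iint f(y)\overline{g(x)}(x+y)^{-1}\dd\sigma(y)\dd\sigma(x)$. Since $B$ is a genuine self-adjoint operator, this kernel form then defines $G_\sigma$ and $B=G_\sigma$.

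The main obstacle is the unboundedness of $X_\mu$ (and, symmetrically, the a priori unknown boundedness of $G_\sigma$). This makes both the commutation argument in Step 1 and the uniqueness argument in Step 3 formal until regularised. My first attempt would be to replace $X_\mu$ by $X_\mu(1+\eps X_\mu)^{-1}$, or to test only on functions supported in compact subsets of $(0,\infty)$, and pass to the limit $\eps\to0$ via monotone convergence of the forms. If that becomes messy, I would instead work throughout with the resolvent identity for $(G_\mu\pm z)^{-1}$, where all operators are bounded.
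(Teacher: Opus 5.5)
The paper does not prove this proposition; it is imported from \cite[Theorem~5.8]{PuTreil}, so I assess your plan on its own terms. The architecture is the natural one and is the Lyapunov mechanism the paper uses in Lemma~\ref{lma:b0}: the rank-one identity $X_\mu G_\mu+G_\mu X_\mu=\jap{\cdot,\1_\mu}_\mu\1_\mu$, cyclicity by the anticommutation trick, $U$ from cyclicity, and identification of $UX_\mu U^*$ by uniqueness for $X_\sigma D+DX_\sigma=0$.

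\textbf{The gap.} There is a genuine gap exactly where the paper's remark locates the difficulty: $X_\mu$ and $G_\sigma$ are unbounded.

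In Step~1, $P_NX_\mu P_N$ only makes sense on $N\cap\Dom(X_\mu^{1/2})$. Nothing tells you this is dense in $N$, so ``$Y=0$'' does not give $N=\{0\}$. Your proposed regularisation does not repair this. For $X_\eps=X_\mu(1+\eps X_\mu)^{-1}$ the kernel computation gives
\[
X_\eps G_\mu+G_\mu X_\eps=\jap{\cdot,k_\eps}_\mu k_\eps+2\eps X_\eps G_\mu X_\eps,\qquad k_\eps(x)=(1+\eps x)^{-1}.
\]
Neither term need vanish on $N$, so the algebra does not close.

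In Step~3 there are three problems. First, the matrix elements $\jap{D\1_{\Delta_1},\1_{\Delta_2}}_\sigma$ require $\1_{\Delta}(G_\mu)\1_\mu\in\Dom(X_\mu)$, which you have not shown. Second, the uniqueness argument ``because $x+y>0$'' treats $D$ as if it were an integral operator. Third, agreement of the form of $B$ with the kernel $1/(x+y)$ on functions supported away from $0$ identifies $B$ with $G_\sigma$ only if that set is a core for $B$. This matters because $G_\sigma$ is defined as a closure from exactly that set, and ``$B$ is a genuine self-adjoint operator'' does not supply the core property.

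\textbf{The fix.} The missing idea is to use finiteness of $\mu$ a second time to obtain a \emph{bounded} rank-one identity.

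By \eqref{eq:b1}, the operator $DH$ with kernel $h'(t+s)$ is bounded, and
\[
\jap{(DH)g,g}=-\int_0^\infty\lambda\abs{(L_\mu g)(\lambda)}^2\dd\mu(\lambda).
\]
Since $G_\mu^{1/2}=L_\mu T_\mu$, the operator $Y=G_\mu^{1/2}X_\mu G_\mu^{1/2}=-T_\mu^*(DH)T_\mu$ is bounded. Conjugating your identity by $G_\mu^{1/2}$ (equivalently, transporting the first identity in \eqref{eq:c1a}, which needs only finiteness of $\mu$) gives
\[
YG_\mu+G_\mu Y=\jap{\cdot,G_\mu^{1/2}\1_\mu}_\mu\,G_\mu^{1/2}\1_\mu .
\]

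Because $G_\mu^{1/2}\1_\mu$ lies in the cyclic subspace, your Step~1 algebra now runs verbatim on $N$. It gives $X_\mu^{1/2}G_\mu^{1/2}u=0$, hence $u=0$.

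In Step~3, several things are now bounded:
\begin{enumerate}[\rm (a)]
\item $UYU^*=X_\sigma^{1/2}BX_\sigma^{1/2}$ is bounded.
\item $X_\sigma$ is bounded, since $\sigma$ lives on $[0,\norm{G_\mu}]$.
\item The equation $ZX_\sigma+X_\sigma Z=\jap{\cdot,\sqrt{x}}_\sigma\sqrt{x}$ has the explicit Hilbert--Schmidt solution with kernel $\sqrt{xy}/(x+y)\le\frac12$. This needs no Carleson condition on $\sigma$.
\end{enumerate}
This solution is unique among bounded operators. Indeed, $\phi(X_\sigma)D=D\phi(-X_\sigma)$ for continuous $\phi$. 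Taking $\phi=1$ on $[\delta,\infty)$ and $\phi=0$ on $(-\infty,-\delta]$ kills $\1_{[\delta,\infty)}(X_\sigma)D\1_{[\delta,\infty)}(X_\sigma)$, and $\sigma(\{0\})=0$ finishes the argument.

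This pins down the form of $B$ on $\Ran X_\sigma^{1/2}$, which contains all bounded functions supported in $[\delta,\infty)$. You still need a core argument to conclude $B=G_\sigma$. The form-core part follows from density of $\Ran L_\mu\ni(x+a)^{-1}$ in $L^2((1+x)\dd\mu)=\Dom(X_\mu^{1/2})$, via Stieltjes inversion. The passage from forms to the closed operator $G_\sigma$ is where \cite[Section~5]{PuTreil} enters.
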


\begin{remark*}
When $\mu$ is a finite Carleson measure, both operators $G_\mu$ and $X_\sigma$ are bounded. The subtle point of the above proposition is that the operators $X_\mu$ and $G_\sigma$ are not necessarily bounded. 
Indeed, $\mu$ may have unbounded support and $\sigma$ does not have to be Carleson. Operators $G_\sigma$ with general (including non-Carleson) measures are defined by  \eqref{eq:g0b} initially on the dense set of bounded compactly supported functions $f$ on $\bbR_+$, and then a closure is taken. 
For the details, see \cite[Section~5]{PuTreil}. 
\end{remark*}

The following crucial lemma is new; it is not contained in the results  \cite{PuTreil}. 
\begin{lemma}\label{lma:g7}
Assume the hypothesis and notation of Proposition~\ref{thm:PuTr-2}. 
For any $f\in\Dom X_\mu$, we have
\begin{equation}
(UX_\mu f)(E)=\jap{f,(G_\mu+E)^{-1}\1_\mu}_\mu, 
\quad
\text{ for $\sigma$-a.e. $E>0$.}
\label{eq:g4a}
\end{equation}
\end{lemma}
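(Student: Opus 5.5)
We need to prove $(UX_\mu f)(E)=\langle f,(G_\mu+E)^{-1}\1_\mu\rangle_\mu$ for $\sigma$-a.e. $E>0$.

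The plan is to reduce \eqref{eq:g4a} to a Lyapunov-type identity for $G_\mu$ and $X_\mu$, and then transfer it to the $L^2(\sigma)$ side via $U$. For $f\in\Dom X_\mu$ one checks directly from \eqref{eq:g0b} that
\[
(X_\mu G_\mu + G_\mu X_\mu)f=\jap{f,\1_\mu}_\mu\,\1_\mu ,
\]
since $\frac{x+y}{x+y}=1$. The right-hand side is well defined because $\mu$ is finite, so $\1_\mu\in L^2(\mu)$ and $\jap{f,\1_\mu}_\mu$ makes sense. This is the rank-one relation \eqref{eq:a9a} in the $L^2(\mu)$ model. Rewriting it as $(G_\mu+E)X_\mu f = X_\mu(G_\mu+E)f - X_\mu G_\mu f - G_\mu X_\mu f + \dots$ is awkward; a cleaner route is to test against resolvents. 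For $E>0$ (so that $G_\mu+E$ is invertible, $G_\mu\ge0$), take the inner product of $X_\mu f$ with $g_E:=(G_\mu+E)^{-1}\1_\mu$ after applying $U$.

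Concretely, apply $U$ and use \eqref{eq:g7a}: $UG_\mu=X_\sigma U$ and $U\1_\mu=\1_\sigma$. Set $F=Uf$ and $\Psi=UX_\mu f$. Applying $U$ to the Lyapunov identity gives
\[
X_\sigma\Psi + U G_\mu X_\mu f=\jap{f,\1_\mu}_\mu\1_\sigma,
\qquad\text{i.e.}\qquad
E\Psi(E)+(U G_\mu X_\mu f)(E)=\jap{f,\1_\mu}_\mu .
\]
Since $UG_\mu X_\mu f=X_\sigma U X_\mu f=E\Psi(E)$, this reduces to $2E\Psi(E)=\jap{f,\1_\mu}_\mu$. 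That would force $\Psi$ to be a multiple of $1/E$, which looks wrong. The error is that I used the identity with both terms $X_\mu G_\mu$ and $G_\mu X_\mu$, but $U$ intertwines $X_\mu$ with $G_\sigma$, not with $X_\sigma$. Redoing the computation: $UX_\mu G_\mu f=G_\sigma U G_\mu f=G_\sigma X_\sigma F$ and $UG_\mu X_\mu f=X_\sigma\Psi$. Hence
\[
E\Psi(E)+(G_\sigma X_\sigma F)(E)=\jap{f,\1_\mu}_\mu .
\]
Now also $\Psi=G_\sigma F$ by the second relation in \eqref{eq:g7a}, so $G_\sigma X_\sigma F=\jap{F,\1_\sigma}_\sigma-X_\sigma G_\sigma F$ by the same algebraic identity in $L^2(\sigma)$. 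This is circular and shows that the naive approach only reproduces the Lyapunov identity.

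I would therefore instead compute the right-hand side of \eqref{eq:g4a} directly. We have $\jap{f,(G_\mu+E)^{-1}\1_\mu}_\mu=\jap{Uf,(X_\sigma+E)^{-1}\1_\sigma}_\sigma=\int F(x)(x+E)^{-1}\dd\sigma(x)=(G_\sigma F)(E)$. Since $UX_\mu f=G_\sigma Uf=G_\sigma F$ by \eqref{eq:g7a}, the claim becomes the statement that the abstractly defined closure $G_\sigma$ acts on $F\in\Dom G_\sigma$ pointwise $\sigma$-a.e. by the integral formula \eqref{eq:g0b}.

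The main obstacle is exactly this point: $G_\sigma$ is defined as a closure from bounded compactly supported functions, and $\sigma$ need not be Carleson. To handle it, take $F_n$ bounded and compactly supported with $F_n\to F$ and $G_\sigma F_n\to G_\sigma F$ in $L^2(\sigma)$. Pass to a subsequence converging $\sigma$-a.e. For each fixed $E>0$ the kernel $x\mapsto(x+E)^{-1}$ lies in $L^2(\sigma)$, because it corresponds to $(G_\mu+E)^{-1}\1_\mu$ under $U$ and $\sigma$ is finite. Hence $\int F_n(x)(x+E)^{-1}\dd\sigma(x)\to\int F(x)(x+E)^{-1}\dd\sigma(x)$ for every $E$. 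Comparing the two limits of $(G_\sigma F_n)(E)$ gives the identity $\sigma$-a.e., which proves the lemma.
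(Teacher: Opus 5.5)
Your final argument is correct and is the paper's proof read in the opposite direction. Both rest on $UX_\mu f=G_\sigma Uf$, on the integral formula $(G_\sigma F)(E)=\jap{(X_\sigma+E)^{-1}F,\1_\sigma}_\sigma$, and on the intertwining $(X_\sigma+E)^{-1}U=U(G_\mu+E)^{-1}$ together with $U\1_\mu=\1_\sigma$; the Lyapunov-identity detour you abandoned is not needed. Your subsequence argument is a genuine addition: it shows that the closure $G_\sigma$ still acts on $Uf$ by the integral formula $\sigma$-a.e., using that $x\mapsto(x+E)^{-1}$ lies in $L^2(\sigma)$ for each fixed $E>0$, a point the paper covers only by appeal to ``the definition of $G_\sigma$''.
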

\begin{proof}
Using \eqref{eq:g7a} and the definition of $G_\sigma$,  we find
\begin{align*}
(UX_\mu f)(E)
&=(G_\sigma Uf)(E)
=\jap{(X_\sigma+E)^{-1}Uf,\1_\sigma}_{\sigma}
=\jap{U(G_\mu+E)^{-1}f,\1_\sigma}_{\sigma}
\\
&=\jap{(G_\mu+E)^{-1}f,U^*\1_\sigma}_\mu
=\jap{(G_\mu+E)^{-1}f,\1_\mu}_\mu
\\
&=\jap{f,(G_\mu+E)^{-1}\1_\mu}_\mu,
\end{align*}
as claimed. 
\end{proof}
Observe that while the left-hand side of \eqref{eq:g4a} is defined for $\sigma$-a.e. $E>0$, the right-hand side is defined (and analytic) for all $E\in\bbC\setminus(-\spec(G_\mu))$. 

\subsection{The unitary map $\Phi_*$}
We would like to set up a  partial isometry 
\[
\Phi_*:L^2(\bbR_+)\to L^2(\sigma). 
\]
(At the end of the proof, we will see that $\Phi_*=\Phi$, hence the notation.)

Recall that $T_\mu$ is a partial isometry of \eqref{eq:g4} and \eqref{eq:g4aa}, and  $U$ is the operator of Proposition~\ref{thm:PuTr-2}.
We define 
\begin{equation}
\Phi_*:=UT_\mu^*.
\label{eq:g5}
\end{equation}
By definition, $\Phi_*$ is a partial isometry from $L^2(\bbR_+)$ onto $L^2(\sigma)$, with $\Ker\Phi_*=\Ker H$ and $\Ran \Phi_*=L^2(\sigma)$. 
Below $v_{\mu}$ is the cyclic element of Proposition~\ref{thm:g-PT1}.

\begin{lemma}\cite{PuTreil}\label{prp:g3}
The partial isometry $\Phi_*$ satisfies the properties:
\begin{equation}
\Phi_*H=X_\sigma\Phi_*, 
\quad
\Phi_*v_{\mu}=\1_\sigma.
\label{eq:g6}
\end{equation}
\end{lemma}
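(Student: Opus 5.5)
The first identity in \eqref{eq:g6} follows from the definitions. By \eqref{eq:g4aa}, $H=T_\mu G_\mu T_\mu^*$, and $T_\mu$ is an isometry, so $T_\mu^*T_\mu=I$ on $L^2(\mu)$. Hence $T_\mu^*H=G_\mu T_\mu^*$. Applying $U$ and using $UG_\mu=X_\sigma U$ from \eqref{eq:g7a}, we get
\[
\Phi_*H=UT_\mu^*H=UG_\mu T_\mu^*=X_\sigma UT_\mu^*=X_\sigma\Phi_*.
\]

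For the second identity, we have $U\1_\mu=\1_\sigma$ by \eqref{eq:g7a}. It therefore suffices to show that $T_\mu^*v_\mu=\1_\mu$, or equivalently that $v_\mu=T_\mu\1_\mu$. Note that $T_\mu\1_\mu\in\Ran T_\mu=\overline{\Ran H}$. By the uniqueness part of Proposition~\ref{thm:g-PT1}, it is then enough to check that
\[
\jap{T_\mu\1_\mu,H^{1/2}g}=\int_0^\infty h(t)\overline{g(t)}\dd t
\quad\text{for all } g\in C^\infty_{\comp}(\bbR_+).
\]

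To check this, first express $H^{1/2}$ through the polar decomposition. From \eqref{eq:g4aa}, $H^{1/2}=T_\mu G_\mu^{1/2}T_\mu^*$, since the right-hand side is positive and its square is $T_\mu G_\mu T_\mu^*$. Therefore
\[
\jap{T_\mu\1_\mu,H^{1/2}g}=\jap{\1_\mu,G_\mu^{1/2}T_\mu^*g}_\mu .
\]
By \eqref{eq:g4}, $G_\mu^{1/2}T_\mu^*=L_\mu$, so the right-hand side equals $\jap{\1_\mu,L_\mu g}_\mu$. Writing out the Laplace transform and using Fubini,
\[
\jap{\1_\mu,L_\mu g}_\mu=\int_0^\infty\int_0^\infty \ee^{-t\lambda}\overline{g(t)}\dd t\,\dd\mu(\lambda)=\int_0^\infty h(t)\overline{g(t)}\dd t .
\]
Fubini applies because $g$ is compactly supported away from $0$ and $\mu$ is finite. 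This proves $v_\mu=T_\mu\1_\mu$, and hence $\Phi_*v_\mu=U\1_\mu=\1_\sigma$.

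The step needing most care is the identity $H^{1/2}=T_\mu G_\mu^{1/2}T_\mu^*$, together with the fact that $\Ran T_\mu=\overline{\Ran H}$. The latter was already stated with \eqref{eq:g4}. For the former, note that $T_\mu T_\mu^*$ is the orthogonal projection onto $\overline{\Ran H}$, and $\Ker H$ is annihilated by both sides. The uniqueness of the positive square root then settles it.
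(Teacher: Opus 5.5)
Your proof is correct and follows the paper's route. The first identity comes from $T_\mu^*H=G_\mu T_\mu^*$ and $UG_\mu=X_\sigma U$, and the second reduces to $T_\mu\1_\mu=v_\mu$ together with $U\1_\mu=\1_\sigma$. The one difference is that the paper simply cites \cite[Lemma~6.1]{PuTreil} for $T_\mu\1_\mu=v_\mu$, whereas you verify it directly via the uniqueness in Proposition~\ref{thm:g-PT1}, using $H^{1/2}=T_\mu G_\mu^{1/2}T_\mu^*$ and $G_\mu^{1/2}T_\mu^*=L_\mu$; this is a valid, self-contained argument.
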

\begin{proof}
The proof is implicit in \cite{PuTreil}; here we explain the details. 
Combining $T_\mu^*H_\mu=G_\mu T_\mu^*$ (see  \eqref{eq:g4aa}) and $UG_\mu=X_\sigma U$ (see \eqref{eq:g7a}), we immediately obtain the first identity $\Phi_*H=X_\sigma\Phi_*$. 
Next, the identity 
\[
T_\mu\1_{\mu}=v_{\mu}
\]
is proved in  \cite[Lemma~6.1]{PuTreil}. From this identity and $U\1_\mu=\1_\sigma$ (see \eqref{eq:g7a}) we obtain the second property  $\Phi_*v_{\mu}=\1_\sigma$. 
\end{proof}

\begin{lemma}\label{prp:g3a}
For any $g\in C_{\comp}^\infty(\bbR_+)$, we have 
\begin{equation}
(\Phi_*g')(E)
=
\frac1{\sqrt{E}}\jap{g,(H+E)^{-1}h}, 
\quad
\text{ for $\sigma$-a.e. $E>0$.}
\label{eq:g6b}
\end{equation}
\end{lemma}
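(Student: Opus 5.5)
The plan is to transfer the computation to $L^2(\mu)$, where differentiation becomes multiplication by the independent variable, and then apply Lemma~\ref{lma:g7}.

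\emph{Step 1 (Laplace transform of a derivative).} For $g\in C_{\comp}^\infty(\bbR_+)$, integrating by parts gives
\[
(L_\mu g')(x)=\int_0^\infty \ee^{-tx}g'(t)\dd t=x\int_0^\infty\ee^{-tx}g(t)\dd t=x(L_\mu g)(x).
\]
There are no boundary terms, since $g$ is compactly supported away from $0$. Hence $L_\mu g'=X_\mu L_\mu g$. In particular $L_\mu g\in\Dom X_\mu$, because $L_\mu g'\in L^2(\mu)$ by the boundedness of $L_\mu$ (Proposition~\ref{prp:a1}).

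\emph{Step 2 (the left-hand side).} Use the polar decomposition \eqref{eq:g4}, $L_\mu=G_\mu^{1/2}T_\mu^*$, together with $UG_\mu=X_\sigma U$ from \eqref{eq:g7a}. The latter gives $UG_\mu^{1/2}=X_\sigma^{1/2}U$ by the functional calculus. Then
\[
UL_\mu g'=UG_\mu^{1/2}T_\mu^*g'=X_\sigma^{1/2}UT_\mu^*g'=X_\sigma^{1/2}\Phi_*g',
\]
so that $(UL_\mu g')(E)=\sqrt{E}\,(\Phi_*g')(E)$ for $\sigma$-a.e. $E$.

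\emph{Step 3 (the right-hand side).} Apply Lemma~\ref{lma:g7} with $f=L_\mu g\in\Dom X_\mu$, and use Step 1:
\[
(UL_\mu g')(E)=(UX_\mu L_\mu g)(E)=\jap{L_\mu g,(G_\mu+E)^{-1}\1_\mu}_\mu=\jap{g,L_\mu^*(G_\mu+E)^{-1}\1_\mu}.
\]
By \eqref{eq:g3a} and \eqref{eq:g3b}, the intertwining identity $L_\mu^*(L_\mu L_\mu^*+E)=(L_\mu^*L_\mu+E)L_\mu^*$ gives $L_\mu^*(G_\mu+E)^{-1}=(H+E)^{-1}L_\mu^*$ for $E>0$. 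Moreover $L_\mu^*\1_\mu=\int_0^\infty \ee^{-t\lambda}\dd\mu(\lambda)=h$; this is legitimate because $\mu$ is finite, so $\1_\mu\in L^2(\mu)$. Hence the right-hand side equals $\jap{g,(H+E)^{-1}h}$.

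\emph{Step 4 (conclusion).} Combining Steps 2 and 3 and dividing by $\sqrt{E}$ yields \eqref{eq:g6b}. This requires that $\sigma$ gives no mass to $\{0\}$. That holds because $\Ker G_\mu=\Ker L_\mu^*=\{0\}$, so $\mathcal E_{G_\mu}(\{0\})=0$ and hence $\sigma(\{0\})=0$.

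The only delicate points are the domain issues, since $X_\mu$ and $G_\sigma$ may be unbounded. The first is checking that $L_\mu g\in\Dom X_\mu$, which is done in Step 1. The second is that $U$ commutes with $G_\mu^{1/2}$ in the appropriate sense, which holds because $G_\mu$ is bounded. Neither looks like a real obstacle.
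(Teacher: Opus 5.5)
Your proposal is correct and follows the paper's proof essentially step for step. Both arguments establish $X_\sigma^{1/2}\Phi_*=UL_\mu$ via the polar decomposition and $UG_\mu=X_\sigma U$, then evaluate on $g'$ using $L_\mu g'=X_\mu L_\mu g$, Lemma~\ref{lma:g7}, and the intertwining $L_\mu^*(G_\mu+E)^{-1}=(H+E)^{-1}L_\mu^*$ together with $L_\mu^*\1_\mu=h$. Your extra checks are details the paper leaves implicit: $L_\mu g\in\Dom X_\mu$ is worth stating, while $\sigma(\{0\})=0$ is harmless but unnecessary, since the statement only concerns $E>0$.
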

\begin{proof}

Using \eqref{eq:g5}, \eqref{eq:g7a} and  \eqref{eq:g4}, we find
\begin{equation}
X_{\sigma}^{1/2}\Phi_*=X_{\sigma}^{1/2}UT_{\mu}^*=UG_{\mu}^{1/2}T_{\mu}^*=UL_\mu.
\label{eq:g6a}
\end{equation}
Let us compute both sides of the last identity on the element $g'$. 
The left-hand side is
\[
(X_{\sigma}^{1/2}\Phi_*g')(E)=\sqrt{E}(\Phi_*g')(E) 
\]
for $\sigma$-a.e. $E>0$. Let us compute the right-hand side. 
Integrating by parts, we find
\[
L_\mu g'=X_\mu L_\mu g,
\]
and therefore, using Lemma~\ref{lma:g7}, 
\begin{align*}
(UL_\mu g')(E)
&=(UX_\mu L_\mu g)(E)
=
\jap{L_\mu g,(G_\mu+E)^{-1}\1_\mu}_\mu
\\
&=
\jap{g,L_\mu^*(L_\mu L_\mu^*+E)^{-1}\1_\mu}
=
\jap{g,(L_\mu^* L_\mu+E)^{-1}L_\mu^*\1_\mu}
\\
&=
\jap{g,(H+E)^{-1}h}.
\end{align*}
Putting this all together, we obtain \eqref{eq:g6a}.
\end{proof}

\subsection{Proof of Theorem~\ref{thm:b7}}
Our aim is to identify the partial isometry $\Phi_*$, defined ``abstractly'' through \eqref{eq:g5}, with the integral operator $\Phi$, defined ``concretely'' via the integral kernel $\varphi_E(t)$ in \eqref{eq:g7}. 
Since $\Phi_*$ already has all the declared properties, this will establish Theorem~\ref{thm:b7}. 

First we compare $\Phi f$ and $\Phi_*f$ for $f\in C_{\comp,0}^\infty(\bbR_+)$. Writing $f=g'$ with $g\in C_{\comp}^\infty(\bbR_+)$, by Lemma~\ref{prp:g3a} we find
\[
(\Phi_* g')(E)
=\frac1{\sqrt{E}}\jap{g,(H+E)^{-1}h}
=\frac1{\sqrt{E}}\jap{g,\varphi_E'}
=-\frac1{\sqrt{E}}\jap{g',\varphi_E}
\]
by integration by parts at the last step. 
Recalling the definition of $\Phi$, we conclude that 
\begin{equation}
\Phi_*f=\Phi f, \quad f\in C_{\comp,0}^\infty(\bbR_+).
\label{eq:g26}
\end{equation}

It remains to check \eqref{eq:g26} also for all $f\in C_{\comp}^\infty(\bbR_+)$. This point is a little subtle. Of course, we can approximate $f$ by elements of $C_{\comp,0}^\infty(\bbR_+)$ in the norm of $L^2(\bbR_+)$, but since we do not know \emph{a priori} whether $\Phi$ is bounded, we need a roundabout argument.

Let us follow the logic of the proof of Theorem~\ref{thm:b5}(ii). Recall that ${w_{\log}}H{w_{\log}}$ is trace class. On the other hand, by \eqref{eq:g6} we can write
\[
H=\Phi_*^*X_\sigma\Phi_*
\]
and therefore 
\[
{w_{\log}} H {w_{\log}}={w_{\log}}\Phi_*^*X_\sigma\Phi_* {w_{\log}}
=(X_\sigma^{1/2}\Phi_* {w_{\log}})^*(X_\sigma^{1/2}\Phi_* {w_{\log}}). 
\]
It follows that the operator $X_\sigma^{1/2}\Phi_* {w_{\log}}$ is Hilbert-Schmidt. This implies
that $\Phi_*$ is an integral operator, with the integral kernel $\Phi_*(E,t)$ satisfying 
\[
\int_0^\infty \abs{\Phi_*(E,t)}^2(w_{\log}(t))^2\dd t<\infty,\quad \text{$\sigma$-a.e. $E>0$.}
\]
Now we recall that by \eqref{eq:g26}, we have 
\begin{equation}
\varphi_E(t)=\Phi_*(E,t)+C_E, \quad \text{$\sigma$-a.e. $E>0$,}
\label{eq:g8}
\end{equation}
with some constant $C_E$. 

Let us prove that $C_E=0$ for $\sigma$-a.e. $E>0$. By the argument of the proof of Theorem~\ref{thm:b5}(ii), the kernel $\Phi_*(E,t)$ satisfies the weak Hankel equation:
\[
\int_0^\infty \Phi_*(E,t)Hf(t)\dd t=E\int_0^\infty \Phi_*(E,t)f(t)\dd t
\]
for all $f\in C_{\comp}^\infty(\bbR_+)$. 
Let us substitute \eqref{eq:g8} here and use that (by Lemma~\ref{prp:b10}(ii)) $\varphi_E$ also satisfies the weak Hankel equation \eqref{eq:b12} on functions $f$ with zero average. We obtain 
\[
C_E\int_0^\infty Hf(t)\dd t=0
\]
for all functions $f\in C_{\comp,0}^\infty(\bbR_+)$. It is easy to see that the integral here can be made non-zero by a choice of $f$, and so  $C_E=0$. 
\qed

\section{Co-finite semi-regular case}
\label{sec:i}

Throughout this section we assume that $\mu$ is Carleson and co-finite. 
In this case we have (see \eqref{eq:b7b} and \eqref{eq:b1a}) 
\begin{equation}
h(t)\leq C/t \quad\text{ and }\quad h(t)\leq C/t^2, \quad t>0. 
\label{eq:b19}
\end{equation}
In particular, $h^{\rm int}\in L^2(\bbR_+)$. 

\subsection{Some heuristics}\label{sec:i0}
Let us start with some heuristics in the spirit of Section~\ref{sec:g0}, except that now we integrate the Hankel equation instead of differentiating it. Assume that $\theta_E$ solves $H\theta_E=E\theta_E$ for some $E>0$. Denoting
\[
\Theta_E(t)=-\int_t^\infty \theta_E(s)\dd s,
\]
let us write the Hankel equation as $H\Theta_E'=E\Theta_E'$ and integrate by parts in the left-hand side:
\[
-\int_0^\infty h'(t+s)\Theta_E(s)\dd s-\Theta_E(0)h(t)=E\Theta_E'(t). 
\]
Subsequently integrating over $t$, we obtain 
\[
(H+E)\Theta_E=-\left\{\int_0^\infty \theta_E(s)\dd s\right\}h^{\rm int}.
\]
As in the finite semi-regular case, from here we learn that firstly, there is only one solution to $H\theta_E=E\theta_E$ (up to scaling) and secondly, normalising this solution by 
\begin{equation}
\int_0^\infty \theta_E(s)\dd s=-1,
\label{eq:b14a}
\end{equation}
we can find it explicitly by 
\begin{equation}
\boxed{
\theta_E(t)=\frac{\dd}{\dd t}\bigl((H+E)^{-1}h^{\rm int}\bigr)(t), \quad t>0. }
\label{eq:b14}
\end{equation}
Below we \emph{ignore} the normalisation \eqref{eq:b14a}, \emph{define} $\theta_E$ by \eqref{eq:b14} and prove that it solves $H\theta_E=E\theta_E$ for $H$-spectrally almost every $E>0$ and effects the diagonalisation of the operator $H$. Normalisation condition \eqref{eq:b14a} will then appear automatically for $H$-spectrally a.e. $E>0$. 

\subsection{The function $\theta_E$}

For any $E\in\bbC\setminus(-\spec(H))$, we define $\theta_E$ by \eqref{eq:b14}. 
\begin{lemma}\label{prp:b11}
Let $H$ be a bounded positive Hankel operator such that the measure $\mu$ is co-finite.
\begin{enumerate}[\rm (i)]
\item
For any $E\in\bbC\setminus(-\spec(H))$, the function $(H+E)^{-1}h^{\rm int}$ is in $C^\infty(\bbR_+)$ and, in particular, the derivative in \eqref{eq:b14} is well-defined. The function $\theta_E$ satisfies
\begin{align}
\abs{\theta_E(t)}&\leq C t^{-\frac{3}{2}}, \quad 0<t<1,
\label{eq:b15}
\\
\abs{\theta_E(t)}&\leq C t^{-2}, \quad t>1.
\label{eq:b16}
\end{align}
\item
The function $\theta_E$ is a weak solution of the Hankel equation $H\theta_E=E\theta_E$ in the following sense:
\begin{equation}
\int_0^\infty \theta_E(t)(Hf)(t)\dd t=E\int_0^\infty \theta_E(t)f(t) \dd t, 
\label{eq:b17}
\end{equation}
for all $f\in C_{\comp}^\infty(\bbR_+)$ satisfying the constraint
\begin{equation}
(Hf)(0)=\int_0^\infty f(t)h(t)\dd t=0.
\label{eq:b18}
\end{equation}
\item
For $E>0$, if $Hf=Ef$ with $f\in {\calB}_0$, then $f\in L^1(\bbR_+)$ and $f$ is collinear with $\theta_E$:
\begin{equation}
f(t)=-\theta_E(t)\int_0^\infty f(s)\dd s. 
\label{eq:b18a}
\end{equation}
\item
For $H$-spectrally almost every $E>0$, we have $\theta_E\in{\calB}_0$ and  $H\theta_E=E\theta_E$ and the normalisation \eqref{eq:b14a} holds. 
\end{enumerate}
\end{lemma}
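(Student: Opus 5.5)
Throughout, write $\Theta_E=(H+E)^{-1}h^{\rm int}\in L^2(\bbR_+)$, so that $\theta_E=\Theta_E'$. The plan mirrors the proof of Lemma~\ref{prp:b10}, with differentiation replaced by integration.

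\emph{Part (i).} I rewrite the resolvent identity as
\[
\Theta_E=\tfrac1E\bigl(h^{\rm int}-H\Theta_E\bigr).
\]
Both $h^{\rm int}(t)=\int e^{-t\lambda}\lambda^{-1}\dd\mu$ and $H\Theta_E(t)=\int h(t+s)\Theta_E(s)\dd s$ are smooth, indeed analytic, in $t>0$. Differentiating gives
\[
E\theta_E(t)=-h(t)-\int_0^\infty h'(t+s)\Theta_E(s)\dd s.
\]
\begin{itemize}
\item For $t>1$: by \eqref{eq:b19}, $h(t)\le Ct^{-2}$. By \eqref{eq:b1a} and Cauchy--Schwarz, the integral is bounded by $C\norm{\Theta_E}\bigl(\int_0^\infty (t+s)^{-6}\dd s\bigr)^{1/2}=O(t^{-5/2})$. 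This gives \eqref{eq:b16}.
\item For $0<t<1$: here $h(t)\le C/t$, and using \eqref{eq:b7b} the integral is bounded by $C\norm{\Theta_E}\bigl(\int_0^\infty (t+s)^{-4}\dd s\bigr)^{1/2}=O(t^{-3/2})$. This gives \eqref{eq:b15}.
\end{itemize}

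\emph{Part (ii).} Take $f\in C^\infty_{\comp}$ with $(Hf)(0)=0$. Then $Hf$ is smooth and $(Hf)'=H(f')$, which decays like $O(t^{-2})$. I integrate by parts:
\[
\int\theta_E\,Hf=-\int\Theta_E\,(Hf)'=-\jap{\Theta_E,Hf'}=-\jap{H\Theta_E,f'}.
\]
The boundary term at $0$ vanishes precisely by \eqref{eq:b18}, although I need the behaviour of $\Theta_E$ near $0$. From the formula $E\Theta_E=h^{\rm int}-H\Theta_E$, $\Theta_E$ is at most $O(t^{-1/2})$-ish near $0$, and $Hf(t)=O(t)$ there, so the product tends to $0$. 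The term at infinity vanishes by the decay. Next I substitute $H\Theta_E=h^{\rm int}-E\Theta_E$. The term $-\int h^{\rm int}f'$ equals $-\int h f=-(Hf)(0)=0$, after integrating by parts ($f$ has compact support away from $0$). The remaining term is $E\jap{\Theta_E,f'}=-E\int\theta_E f$, up to sign bookkeeping, which yields \eqref{eq:b17}. All functions are real, so the inner products are bilinear integrals.

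\emph{Part (iii).} This justifies the heuristic of Section~\ref{sec:i0}. Let $f\in\calB_0$ satisfy $Hf=Ef$. Using \eqref{eq:b19} in the Hankel equation, $|f(t)|\le C\int (t+s)^{-2}|f(s)|\dd s=O(t^{-2+\eps'})$ at infinity. Near $0$, $f\in\calB_0$ already gives integrability, so $f\in L^1$. Set $F(t)=-\int_t^\infty f$; then $F$ is bounded, $F(t)=O(t^{-1+\eps'})$ at infinity, so $F\in L^2$. Integrating the Hankel equation from $t$ to $\infty$ and using Fubini gives
\[
\int_0^\infty h^{\rm int}(t+s)f(s)\dd s=-EF(t).
\]
Integrating by parts in $s$, with $F(s)h^{\rm int}(t+s)\to0$ at both ends, gives
\[
(H+E)F=-\Bigl(\int_0^\infty f\Bigr)h^{\rm int}.
\]
Hence $F=-\bigl(\int f\bigr)\Theta_E$, and differentiating gives \eqref{eq:b18a}. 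If $\int f=0$ then $F=0$, hence $f=0$.

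\emph{Part (iv).} By Theorem~\ref{thm:b5}(ii), for spectrally a.e.\ $E$ there is a nonzero solution $f\in\calB_0$. By (iii), $\int f\neq0$ and $\theta_E=-f/\int f$. This gives $\theta_E\in\calB_0$ and $H\theta_E=E\theta_E$, and integrating shows $\int\theta_E=-1$, i.e.\ the normalisation \eqref{eq:b14a}.

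The main obstacle I expect is in parts (ii) and (iii): justifying the boundary terms at $t=0$ and the Fubini steps, given only the bound $O(t^{-3/2})$ on $\theta_E$ near zero.
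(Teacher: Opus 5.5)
Your argument follows the paper's proof essentially step for step: the identity $E\Theta_E=h^{\rm int}-H\Theta_E$ with Cauchy--Schwarz in (i), integration by parts against $\Theta_E$ in (ii), the integrated equation $(H+E)F=F(0)h^{\rm int}$ in (iii), and Theorem~\ref{thm:b5}(ii) combined with (iii) in (iv). However, three steps are wrong as written, though each is easily repaired.

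In (ii), integrating $\int_0^\infty h'(t+s)f(s)\,\dd s$ by parts in $s$ gives $(Hf)'=-H(f')$, not $H(f')$. With your sign, the chain ends at $\int\theta_E\,Hf=-E\int\theta_E f$. With the correct sign you get $\int\theta_E\,Hf=\int (H\Theta_E)f'=\int h^{\rm int}f'-E\int\Theta_E f'=(Hf)(0)+E\int\theta_E f$, which is exactly \eqref{eq:b17} under \eqref{eq:b18}. So the ``sign bookkeeping'' you defer comes down to this one sign.

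In (iii), a single application of the Hankel equation with the $\calB_0$ bound gives only $f(t)=O(t^{-3/2+\eps})$, because $\int_1^\infty (t+s)^{-2}s^{-1/2+\eps}\,\dd s\asymp t^{-3/2+\eps}$ as $t\to\infty$. That yields only $F(t)=O(t^{-1/2+\eps})$, which does not give $F\in L^2$, so you could not apply $(H+E)^{-1}$. You must feed the improved bound back into the equation once, as the paper does, to reach $f(t)=O(t^{-2})$.

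Also in (iii), when you integrate $\int_0^\infty h^{\rm int}(t+s)F'(s)\,\dd s$ by parts, the boundary term at $s=0$ does not vanish. It equals $-F(0)h^{\rm int}(t)$, and it is precisely what produces the right-hand side $F(0)h^{\rm int}=-\bigl(\int_0^\infty f\bigr)h^{\rm int}$. Your displayed identity is correct, but only the term at $s=\infty$ vanishes.
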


\begin{remark*}
For each function $f\in C_{\comp}^\infty(\bbR_+)$ satisfying \eqref{eq:b18}, the integral on the left-hand side of \eqref{eq:b17} converges absolutely. 
\end{remark*}

\begin{remark*}
There is a subtle difference between the finite and co-finite semi-regular cases. 
The implication
\[
\theta_E\in {\calB}_0 \quad \Rightarrow \quad E\in\spec(H)
\quad\text{ is false!}
\]
Indeed, suppose $H$ is a finite rank operator. Then $h$ is a finite linear combination of exponentials, and $\theta_E$ is in ${\calB}_0$ for \emph{all} $E>0$. However, $\theta_E$ is a true eigenfunction of $H$ only at the eigenvalues of $H$, see \eqref{eq:b5}.
\end{remark*}

\subsection{The eigenfunction expansion of $H$}
For any function $f\in C^\infty_{\comp}(\bbR_+)$ and for $E>0$ we set 
\begin{equation}
\boxed{
(\Theta f)(E)=-\frac1{\sqrt{E}}\int_0^\infty \theta_{E}(t) f(t)\dd t.}
\label{eq:b20}
\end{equation}

\begin{theorem}\label{thm:b12}
Let $H$ be a bounded positive Hankel operator such that the measure $\mu$ in the Laplace transform representation \eqref{eq:a6} is co-finite. Let $\Theta$ be the integral transform defined by \eqref{eq:b20} on $f\in C^\infty_{\comp}(\bbR_+)$. Then there exists a finite measure $\rho$ on $(0,\infty)$ such that $\Theta$ extends to a partial isometry from $L^2(\bbR_+)$ onto $L^2(\rho)$, with $\Ker\Theta=\Ker H$, i.e. the Parseval identity 
\begin{equation}
\int_0^\infty \abs{(\Theta f)(E)}^2\dd\rho(E)=\norm{P_{\overline{\Ran H}}f}^2, 
\quad
f\in C^\infty_{\comp}(\bbR_+),
\label{eq:b20a}
\end{equation}
holds true, where $P_{\overline{\Ran H}}$ is the orthogonal projection onto $\overline{\Ran H}$ in $L^2(\bbR_+)$.  Thus extended, $\Theta$ intertwines $H$ and the multiplication by the independent variable in $L^2(\rho)$, i.e. 
\[
(\Theta H f)(E)=E(\Theta f)(E) \quad \text{for $\rho$-a.e. $E>0$.}
\]
\end{theorem}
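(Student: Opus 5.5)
The plan follows the proof of Theorem~\ref{thm:b7}, with the r\^oles of differentiation and integration interchanged. The natural cyclic element is now $e_\mu:=X_\mu^{-1}\1_\mu$, i.e. the function $\lambda\mapsto1/\lambda$. It lies in $L^2(\mu)$ precisely by co-finiteness \eqref{eq:a11}, and
\[
L_\mu^*e_\mu=\int_0^\infty \ee^{-t\lambda}\frac{\dd\mu(\lambda)}{\lambda}=h^{\rm int}.
\]
From \cite{PuTreil} I would take the co-finite counterpart of Proposition~\ref{thm:PuTr-2}: the element $e_\mu$ is cyclic for $G_\mu$. With $\rho(\Delta)=\jap{\mathcal E_{G_\mu}(\Delta)e_\mu,e_\mu}_\mu$, which is a finite measure, there is a unitary $U:L^2(\mu)\to L^2(\rho)$ with
\[
UG_\mu=X_\rho U,\qquad UX_\mu^{-1}=G_\rho U,\qquad Ue_\mu=\1_\rho .
\]
This is the step I expect to be the main obstacle. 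If \cite{PuTreil} does not state it in this form, I would derive it from the rank-one Lyapunov identity $X_\mu^{-1}G_\mu+G_\mu X_\mu^{-1}=\jap{\cdot,e_\mu}_\mu e_\mu$, which holds because $\frac1x+\frac1y=\frac{x+y}{xy}$. I would then reproduce the argument of \cite{PuTreil}, now with the possibly unbounded operator $X_\mu^{-1}$, taking care with domains.

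Granting this, the analogue of Lemma~\ref{lma:g7} follows by the same computation:
\[
(UX_\mu^{-1}y)(E)=\jap{(G_\mu+E)^{-1}y,e_\mu}_\mu\quad\text{for $\rho$-a.e. } E>0,
\]
valid for all $y\in L^2(\mu)$. Define $\Theta_*=UT_\mu^*$ with $T_\mu$ from \eqref{eq:g4}. As in Lemma~\ref{prp:g3}, $\Theta_*$ is a partial isometry from $L^2(\bbR_+)$ onto $L^2(\rho)$ with kernel $\Ker H$, and $\Theta_*H=X_\rho\Theta_*$. Moreover, as in \eqref{eq:g6a}, $X_\rho^{1/2}\Theta_*=UL_\mu$.

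For $f\in C^\infty_{\comp}(\bbR_+)$, integration by parts gives $L_\mu f'=X_\mu L_\mu f$, with no boundary term. Hence $L_\mu f=X_\mu^{-1}(L_\mu f')$, and
\[
\sqrt E\,(\Theta_*f)(E)=(UX_\mu^{-1}L_\mu f')(E)=\jap{(G_\mu+E)^{-1}L_\mu f',e_\mu}_\mu
=\jap{f',(H+E)^{-1}h^{\rm int}}.
\]
The last equality uses $L_\mu^*(G_\mu+E)^{-1}=(H+E)^{-1}L_\mu^*$. On the other side, using \eqref{eq:b14} and integrating by parts once more (the boundary terms vanish because $f$ has compact support in $(0,\infty)$), we get
\[
\int_0^\infty\theta_E f\,\dd t=-\jap{f',(H+E)^{-1}h^{\rm int}}.
\]
Therefore $\Theta f=\Theta_*f$ on all of $C^\infty_{\comp}(\bbR_+)$, up to a harmless overall sign that I would fix by adjusting the sign of $U$, i.e. of the cyclic element.

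This is in fact simpler than the finite case: no zero-average restriction arises and no constant $C_E$ has to be eliminated. The Parseval identity \eqref{eq:b20a} and the intertwining relation then follow at once from the corresponding properties of $\Theta_*$.
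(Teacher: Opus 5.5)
Your proposal is correct, and apart from one step it is the paper's proof. The paper uses the same $\Theta_*=VT_\mu^*$ (your $e_\mu$ is its $\omega_\mu$), the same identity $X_\rho^{1/2}\Theta_*=VL_\mu$, the same relation $L_\mu f=X_\mu^{-1}L_\mu f'$, and the same resolvent computation. The signs already agree, so no adjustment is needed:
\[
\sqrt E\,(\Theta_*f)(E)=\jap{f',(H+E)^{-1}h^{\rm int}}=-\int_0^\infty\theta_E f\,\dd t=\sqrt E\,(\Theta f)(E).
\]

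The difference lies in the step you flagged, the existence of the unitary with $UG_\mu=X_\rho U$, $UX_\mu^{-1}=G_\rho U$, $Ue_\mu=\1_\rho$ (Theorem~\ref{thm:i1}). You would re-run the argument of \cite{PuTreil} from your rank-one identity with the unbounded $X_\mu^{-1}$; the identity is correct, but this amounts to re-proving \cite[Theorem~5.8]{PuTreil} with all its domain issues.

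The paper instead reduces to the finite case. Let $\mu^{\#}$ be the image of $\dd\mu(y)/y^2$ under $y\mapsto1/y$, and let $J\colon L^2(\mu)\to L^2(\mu^{\#})$ be the unitary $(Jf)(x)=x^{-1}f(1/x)$. A direct check gives
\[
G_{\mu^{\#}}J=JG_\mu,\qquad JX_\mu^{-1}=X_{\mu^{\#}}J,\qquad J\omega_\mu=\1_{\mu^{\#}}.
\]
Since $\mu^{\#}$ is finite and Carleson, Proposition~\ref{thm:PuTr-2} applied to $\mu^{\#}$ gives a unitary $U$, and $V=UJ$ has all three properties. No new domain analysis is needed, so this is the cleaner way to close the step you were unsure about.

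One minor correction: your analogue of Lemma~\ref{lma:g7} holds for $y\in\Dom X_\mu^{-1}=\Ran X_\mu$, not for all $y\in L^2(\mu)$. This is harmless, because you only apply it to $y=L_\mu f'=X_\mu L_\mu f\in\Ran X_\mu$.
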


Below we describe the cyclic element of $H^\perp$, formally defined by 
\[
w_\mu=H^{1/2}\1,
\]
where $\1$ is the function identically equal to $1$. 
\begin{proposition}\cite{PuTreil}\label{thm:b13}
Let $H$ be a positive Hankel operator such that the measure $\mu$ in the Laplace transform representation \eqref{eq:a6} is co-finite. Then there exists a unique element $w_\mu\in\overline{\Ran H}$ such that for any $g\in C_{\comp}^\infty(\bbR_+)$ we have 
\[
\jap{w_\mu,H^{1/2}g}
=
\int_0^\infty \int_0^\infty h(t+s)\overline{g(t)}\dd t\, \dd s. 
\]
This element is cyclic for $H^\perp$ and 
\[
\norm{w_\mu}^2=\int_0^\infty\frac{\dd\mu(\lambda)}{\lambda^2}.
\]
\end{proposition}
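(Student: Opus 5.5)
The plan is to transfer the problem to $L^2(\mu)$ by the polar decomposition \eqref{eq:g4}, $L_\mu^*=T_\mu G_\mu^{1/2}$, exactly as in the finite case. Put $u(\lambda)=\lambda^{-1}$. Co-finiteness \eqref{eq:a11} says precisely that $u\in L^2(\mu)$, with $\norm{u}_\mu^2=\int_0^\infty \lambda^{-2}\dd\mu(\lambda)$. The candidate is
\[
w_\mu:=T_\mu u .
\]
Since $T_\mu$ is an isometry with $\Ran T_\mu=\overline{\Ran H}$, we get $w_\mu\in\overline{\Ran H}$ and the norm identity immediately.

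\textbf{Defining identity and uniqueness.} I will check the defining identity directly. For $g\in C^\infty_\comp(\bbR_+)$, Fubini and \eqref{eq:a6} give
\[
\int_0^\infty\!\!\int_0^\infty h(t+s)\overline{g(t)}\dd t\,\dd s=\int_0^\infty \overline{(L_\mu g)(\lambda)}\,\frac{\dd\mu(\lambda)}{\lambda}=\jap{u,L_\mu g}_\mu .
\]
Absolute convergence holds because $\abs{L_\mu g(\lambda)}\le C$ and $\int_0^\infty\lambda^{-1}\dd\mu<\infty$ for a co-finite Carleson measure. On the other hand, \eqref{eq:g4aa} gives $H^{1/2}=T_\mu G_\mu^{1/2}T_\mu^*$, so
\[
\jap{T_\mu u,H^{1/2}g}=\jap{u,G_\mu^{1/2}T_\mu^*g}_\mu=\jap{u,L_\mu g}_\mu .
\]
This proves the identity. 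For uniqueness, $H^{1/2}C^\infty_\comp(\bbR_+)$ is dense in $\overline{\Ran H^{1/2}}=\overline{\Ran H}$, so an element of $\overline{\Ran H}$ is determined by its inner products against this set.

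\textbf{Cyclicity.} By \eqref{eq:g4aa}, $T_\mu$ is a unitary from $L^2(\mu)$ onto $\overline{\Ran H}$ intertwining $G_\mu$ with $H^\perp$. Hence it suffices to show that $u$ is cyclic for $G_\mu$. The tool is the rank-one Lyapunov identity (the co-finite variant of \eqref{eq:a9a}):
\[
X_\mu^{-1}G_\mu+G_\mu X_\mu^{-1}=\jap{\cdot,u}_\mu u ,
\]
which follows from $\frac1{x+y}\bigl(\frac1x+\frac1y\bigr)=\frac1{xy}$. Let $\mathcal M$ be the cyclic subspace of $u$, and $P$ the projection onto $\mathcal M^\perp$. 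Then $\mathcal M^\perp$ reduces $G_\mu$ and is orthogonal to $u$. Compressing the identity gives $BG+GB=0$ on $\mathcal M^\perp$, where $G=G_\mu|_{\mathcal M^\perp}$ and $B=PX_\mu^{-1}P$ is a nonnegative, formally injective operator.

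From $BG=-GB$ we get that $B$ commutes with $G^2$, hence with $G=(G^2)^{1/2}$. Then $2GB=0$, and since $\Ker G_\mu=\Ker L_\mu^*=\{0\}$, $B=0$. But $\jap{X_\mu^{-1}f,f}_\mu>0$ for $f\ne0$, so $\mathcal M^\perp=\{0\}$.

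\textbf{Main obstacle.} The hard step is making this anticommutation argument rigorous, because $X_\mu^{-1}$ is unbounded: $\mu$ may charge neighbourhoods of $0$. I would first try replacing $X_\mu^{-1}$ by the bounded operators $(X_\mu+\delta)^{-1}$ and tracking the error, or working on the dense set of $f$ vanishing near $0$, where all terms are bounded. Alternatively, one can pass to resolvents and prove directly that $\jap{(G_\mu+E)^{-1}u,f}_\mu=0$ for all $E>0$ forces $f=0$, reading the identity as $X_\mu^{-1}(G_\mu+E)^{-1}-(G_\mu+E)^{-1}X_\mu^{-1}$ expressed via rank-one terms. If these domain issues become too delicate, the fallback is to reduce to the finite case by the substitution $\dd\nu(\lambda)=\lambda^{-2}\dd\mu(\lambda)$. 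This is a finite measure, and $\lambda\mapsto\lambda^{-1}$ should map the co-finite problem for $\mu$ onto the finite problem for the pushforward of $\nu$, with $u$ corresponding to $\1$. Proposition~\ref{thm:PuTr-2} would then give cyclicity. The point to verify there is that $G$ transforms correctly under the inversion.
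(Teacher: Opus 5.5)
The paper does not prove this proposition; it quotes it from \cite{PuTreil}. What it does record is $T_\mu\omega_\mu=w_\mu$ (from \cite[Lemma~6.2]{PuTreil}) and the cyclicity of $\omega_\mu$ for $G_\mu$ (Theorem~\ref{thm:i1}). Your $w_\mu=T_\mu u$ (your $u$ is the paper's $\omega_\mu$) is that same identification. Your fallback for cyclicity is exactly the paper's proof of Theorem~\ref{thm:i1}. The pushforward of $\lambda^{-2}\dd\mu$ under $\lambda\mapsto1/\lambda$ is $\mu^{\#}$, and $(Jf)(x)=x^{-1}f(1/x)$ gives $JG_\mu=G_{\mu^{\#}}J$ and $J\omega_\mu=\1_{\mu^{\#}}$ by direct substitution. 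Proposition~\ref{thm:PuTr-2} then applies, once you also check that $\mu^{\#}$ is Carleson.

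Your direct check of the defining identity via $H^{1/2}=T_\mu G_\mu^{1/2}T_\mu^*$ is a good self-contained replacement for the citation. One justification in it is false: $\int_0^\infty\lambda^{-1}\dd\mu(\lambda)$ can diverge for a co-finite Carleson $\mu$. Rosenblum's operator, where $\mu$ is Lebesgue measure on $[1/2,\infty)$, is an example. Fubini is justified instead by $\int_0^\infty (L_\mu\abs{g})(\lambda)\lambda^{-1}\dd\mu(\lambda)\le\norm{L_\mu\abs{g}}_\mu\norm{u}_\mu<\infty$.

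Your primary cyclicity argument is a genuinely different route, and you are right that it does not close as written. There are three problems:
\begin{enumerate}
\item $B=PX_\mu^{-1}P$ is only defined on $\mathcal M^\perp\cap\Dom X_\mu^{-1}$, over which you have no control.
\item ``Commutes with $G^2$, hence with $G$'' is an argument for bounded operators.
\item Even $B=0$ would only yield $\mathcal M^\perp\cap\Dom X_\mu^{-1}=\{0\}$.
\end{enumerate}

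The fix is to work with $K=X_\mu^{-1}G_\mu$ instead of $X_\mu^{-1}$ alone, with no truncation needed.
\begin{itemize}
\item Its kernel satisfies $0<\frac{1}{x(x+y)}\le u(x)u(y)$, so $K$ is Hilbert--Schmidt with $\norm{K}\le\norm{u}_\mu^2$.
\item Your kernel computation gives $K+K^*=\jap{\cdot,u}_\mu u$ as bounded operators.
\item $G_\mu K=K^*G_\mu$, because $\jap{Kf,G_\mu g}_\mu$ and $\jap{G_\mu f,Kg}_\mu$ both equal $\int_0^\infty x^{-1}(G_\mu f)(x)\overline{(G_\mu g)(x)}\dd\mu(x)$.
\end{itemize}
Hence $\widetilde K=PKP|_{\mathcal M^\perp}$ is bounded, satisfies $\widetilde K^*=-\widetilde K$, and anticommutes with $G_0=G_\mu|_{\mathcal M^\perp}$. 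Your commutant argument now applies rigorously and gives $G_0\widetilde K=0$, so $\widetilde K=0$ since $\Ker G_\mu=\{0\}$. Now take $f\in\mathcal M^\perp$; then $G_\mu f\in\mathcal M^\perp$, so
$0=\jap{\widetilde Kf,G_\mu f}_\mu=\jap{Kf,G_\mu f}_\mu=\int_0^\infty x^{-1}\abs{(G_\mu f)(x)}^2\dd\mu(x)$.
This forces $G_\mu f=0$, hence $f=0$.

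The two routes buy different things. Yours is elementary and self-contained: it is the $L^2(\mu)$ analogue of the paper's Lyapunov proof of Lemma~\ref{lma:b0}, now valid without discrete spectrum. The paper's route rests on the much deeper \cite[Theorem~5.8]{PuTreil}. In exchange it produces the unitary $V$ with $VX_\mu^{-1}=G_\rho V$, which the paper actually needs later for Theorem~\ref{thm:b12}.
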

Below we will see that the measure $\rho$ of Theorem~\ref{thm:b12} is the spectral measure of $H$ corresponding to the element $w_\mu$. In other words, we will see that 
\[
\Theta w_\mu =1.
\]

\subsection{Example: Rosenblum's operator}

Consider \emph{Rosenblum's operator}, i.e. the Hankel operator with the kernel function 
\[
h(t)=\ee^{-t/2}/t=\int_{1/2}^\infty \ee^{-t\lambda}\dd\lambda. 
\]
This operator was diagonalised by M.~Rosenblum \cite{Ros}; see also \cite{Magnus,Ya1}. The key identity here (due to Shanker \cite{Shanker}) is
\begin{equation}
\int_0^\infty \frac{\ee^{-(t+s)/2}}{t+s}s^{-\frac12}K_{\ii k}(s/2)\dd s
=
\pi\sech(\pi k)t^{-\frac12}K_{\ii k}(t/2), \quad t>0, \quad k>0,
\label{eq:i9c}
\end{equation}
where $K_\nu$ is the modified Bessel function of the third kind (also known as Macdonald functions) \cite[Section 7.2.2]{BE}.  
(Note that $K_{-\ii k}=K_{\ii k}$, i.e. changing the sign of $k$ does not give a new solution.)

We recall that for $\Re \nu>-\tfrac12$ and $\Re z>0$ the function  $K_\nu(t)$ has the integral representation \cite[Section 7.3.4, formula (15)]{BE}
\begin{equation}
\Gamma(\nu+\tfrac12)K_\nu(z)=\sqrt{\pi}\bigl(\tfrac{z}{2}\bigr)^\nu\int_1^\infty \ee^{-zu}(u^2-1)^{-\frac12+\nu}\dd u.
\label{eq:i9a}
\end{equation}
From here it is easy to see its properties. In particular, $K_\nu$  satisfies 
 \cite[Sections 10.25, 10.30 and 10.45]{DLMF}
\begin{align}
K_\nu(t)&=O(t^{-1/2}\ee^{-t}), \quad t\to\infty,  \quad \Re \nu>0,
\notag
\\
K_\nu(t)&\sim \frac12\Gamma(\nu)(t/2)^{-\nu}, \quad t\to0, \quad \Re \nu>0,
\label{eq:i9b}
\\
K_{\ii k}(t)&=O(1), \quad t\to0_+, \quad k\in\bbR.
\notag
\end{align}
Returning to \eqref{eq:i9c}, we write it as 
\[
Hf=Ef, \quad f(t)=t^{-\frac12}K_{\ii k}(t/2), \quad E=\pi\sech(\pi k)\in (0,\pi), \quad k>0,
\]
with $f\in\calB_0$. 
By Lemma~\ref{prp:b11}(iii), the function $f$ is collinear with $\theta_E$. In order to establish the coefficient of proportionality, we need to compute the integral in \eqref{eq:b18a}. 
The identity 
\[
\int_0^\infty t^{-\frac12}K_{\ii k}(t/2)\dd t=\frac12\abs{\Gamma(\tfrac14+\tfrac{\ii}{2}k)}^2
\]
is known \cite[6.561(16)]{GR} and can be easily derived from the integral representation \eqref{eq:i9a}. From this identity we find 
\[
\theta_E(t)=\frac{2}{\abs{\Gamma(\tfrac14+\tfrac{\ii}{2}k)}^2}f(t)
=\frac{2}{\abs{\Gamma(\tfrac14+\tfrac{\ii}{2}k)}^2}t^{-1/2}K_{\ii k}(t/2). 
\]

This explicit formula for $\theta_E$ can be extended to values of $E$ outside $[0,\pi]$ by using analytic continuation:
\[
\theta_E(t)=\frac{2}{\Gamma(\frac14+\frac{\nu}{2})\Gamma(\frac14-\frac{\nu}{2})}t^{-1/2}K_\nu(t/2),
\quad
\text{ where }
E=\pi\sec(\pi\nu).
\]
Note that the right-hand side here is even in $\nu$, and therefore the choice of the branch of the inverse $\sec$ does not affect this definition. We confine ourselves to two cases:

Case 1: 
$\nu\in(0,\tfrac12)$ corresponds to \emph{positive} values $E=\pi\sech(\pi\nu)>\pi$ outside the spectrum. By \eqref{eq:i9b} we have 
\[
\theta_E(t)\sim C_E t^{-\frac12-\nu}, \quad t\to0.
\]
In particular, $\theta_E$ is not in $\calB_0$. 

Case 2: $\nu\in(\tfrac12,1)$ corresponds to \emph{negative} values $E=\pi\sech(\pi\nu)<-\pi$ outside the spectrum. In this case, again by  \eqref{eq:i9b}, the function $\theta_E$ is even ``worse'': it is not even integrable near $t=0$.

Let us discuss Parseval's identity \eqref{eq:b20a}. 
The unitarity of the Kontorovich-Lebedev transform \cite[Section 2.3]{Yakubovich} can be written as
\[
\norm{f}^2
=\frac{2}{\pi^2}\int_0^\infty k\sinh(\pi k)\Abs{\int_0^\infty t^{-\frac12}K_{\ii k}(t/2)f(t)\dd t}^2\dd k.
\]
Using $\theta_E$, we rewrite this as
\[
\norm{f}^2
=\frac{1}{2\pi^2}\int_0^\infty k\sinh(\pi k)\Abs{\Gamma(\tfrac14+\ii k)}^4
\Abs{\int_0^\infty \theta_E(t) f(t)\dd t}^2\dd k,
\]
where $E$ and $k$ related by $E=\pi\sech(\pi k)$. 
Changing integration from $\dd k$ to $\dd E$, after a little calculation this rewrites as
\begin{align*}
\norm{f}^2
&=
\frac1{2\pi^2}
\int_0^\pi\frac{k(E)}{E}\abs{\Gamma(\tfrac14+\tfrac{\ii}{2}k(E))}^4\abs{\widetilde{f}(E)}^2\dd E,
\end{align*}
where
\[
\widetilde{f}(E)=-\frac1{\sqrt{E}}
\int_0^\infty \theta_E(t) f(t)\dd t. 
\]
Thus, the corresponding spectral measure $\rho$ in this case is 
\[
\dd\rho(E)=\frac1{2\pi^2}\frac{k(E)}{E}\abs{\Gamma(\tfrac14+\tfrac{\ii}{2}k)}^4\dd E,
\]
supported on $[0,\pi]$.

\subsection{Proof of Lemma~\ref{prp:b11}}
Part (i):
we denote
\[
\Theta_E=(H+E)^{-1}h^{\rm int}\in L^2(\bbR_+)
\]
for brevity. Observe the identity
\begin{equation}
E \Theta_E= (E+H-H)(H+E)^{-1}h^{\rm int}
=
h^{\rm int}-H\Theta_E.
\label{eq:i3}
\end{equation}
We already know (see Proposition~\ref{thm:b4})  that  $h^{\rm int}$ is smooth on $\bbR_+$ and that elements of $\Ran H$ are smooth. Thus, from \eqref{eq:i3} we obtain that $\Theta_E$ is smooth. 

Let us check the estimates \eqref{eq:b15} and \eqref{eq:b16} for $\theta_E$.  
By \eqref{eq:i3} we find 
\begin{align*}
E \theta_E(t)&=\frac{\dd}{\dd t}(h^{\rm int}-H\Theta_E)
=-h(t)-\int_0^\infty h'(t+s)\Theta_E(s)\dd s.
\end{align*}
From here  by Cauchy-Schwarz, using \eqref{eq:b7b} and \eqref{eq:b1a},  we obtain 
\begin{align*}
\abs{E \theta_E(t)+h(t)}&\leq C t^{-\frac32}, \quad 0<t<1,
\\
\abs{E \theta_E(t)+h(t)}&\leq C t^{-\frac52}, \quad t>1.
\end{align*}
We already know that $h$ satisfies the estimates \eqref{eq:b19}. Thus, we obtain the estimates \eqref{eq:b15} and \eqref{eq:b16} for $\theta_E$.

Part (ii):
Let us check the weak eigenvalue equation \eqref{eq:b17}. 
We first observe that from \eqref{eq:i3}, using Cauchy-Schwarz, it is easy to obtain 
\begin{align}
\abs{\Theta_E(t)}&\leq Ct^{-\frac12}, \quad 0<t<1,
\label{eq:i4}
\\
\abs{\Theta_E(t)}&\leq Ct^{-\frac32}, \quad t>1.
\label{eq:i5}
\end{align}
Integrating by parts, we find
\[
\jap{\theta_E,Hf}
=
\jap{\Theta_E',Hf}
=
-\jap{\Theta_E,(Hf)'}.
\]
The boundary term $\lim_{t\to0}\Theta_E(t) Hf(t)$  disappears because of the assumption $Hf(0)=0$ and \eqref{eq:i4}; the boundary term $\lim_{t\to\infty}\Theta_E(t)Hf(t)$ disappears because of \eqref{eq:i5} and because $Hf(t)=O(t^{-2})$ as $t\to\infty$. Further, integrating by parts, we find
\[
(Hf)'(t)
=\int_0^\infty h'(t+s)f(s)\dd s
=-(Hf')(t). 
\]
Thus, we find
\begin{align*}
-\jap{\Theta_E,(Hf)'}
&=\jap{\Theta_E,Hf'}
=\jap{(H+E)^{-1}h^{\rm int},Hf'}
\\
&=\jap{H(H+E)^{-1}h^{\rm int},f'}
=\jap{h^{\rm int},f'}-E\jap{\Theta_E,f'}.
\end{align*}
The term $\jap{h^{\rm int},f'}$ vanishes again by assumption $Hf(0)=0$. Integrating by parts once again, we arrive at the right-hand side of \eqref{eq:b17}.

Part (iii): 
Let $E>0$ and let $Hf=Ef$ with $f\in {\calB}_0$ non-zero. We need to justify the reasoning of Section~\ref{sec:i0}. 
By \eqref{eq:b1a} and our assumption $f\in {\calB}_0$, we have, exactly as in \eqref{eq:e3a}, 
\[
\abs{f(t)}\leq C
\left( 
\int_0^1 (s+t)^{-2}s^{-\frac12-\eps}\dd s
+ \int_1^\infty (s+t)^{-2}s^{-\frac12+\eps}\dd s
\right)
=O(t^{-\frac{3}{2}+\eps}), \quad t\to\infty. 
\]
Bootstrapping this estimate, we find 
\[
\abs{f(t)}\leq O(t^{-2})+C\int_1^\infty (s+t)^{-2}s^{-\frac32+\eps}\dd s=O(t^{-2}), \quad t\to\infty.
\]
Denoting 
\[
F(t)=-\int_t^\infty f(s)\dd s,
\]
we conclude that $F\in L^2(\bbR_+)$. 

Now the integration by parts argument of Section~\ref{sec:i0} can be justified, which gives 
\begin{equation}
(H+E)F=F(0)h^{\rm int}. 
\label{eq:i7}
\end{equation}
If $F(0)=0$, then we get $F=0$ and $f=0$, which is a contradiction. Thus, $F(0)\not=0$, and then  \eqref{eq:i7} yields $F=F(0)(H+E)^{-1}h^{\rm int}$, i.e. \eqref{eq:b18a}, as claimed.

Part (iv): by Theorem~\ref{thm:b5}(ii), for $H$-spectrally almost every $E>0$, there is $f\in {\calB}_0$ with $Hf=Ef$. By part (ii), $f$ is collinear with $\theta_E$ and so $H\theta_E=E\theta_E$. Substituting $f=\theta_E$ in \eqref{eq:b18a}, we obtain the normalisation condition \eqref{eq:b14a}. 

The proof of Lemma~\ref{prp:b11} is complete. \qed

\subsection{The unitary map $V$}
In the rest of this section we prove Theorem~\ref{thm:b12}. 
As in the previous section, we will work with the space $L^2(\mu)$ ($\mu$ is a \emph{co-finite} Carleson measure, as in Theorem~\ref{thm:b12})  and use the results of \cite{PuTreil}. Notation $\omega_{\mu}$ stands for the function $\omega_{\mu}(x)=1/x$ on $\bbR_+$, considered as an element of $L^2(\mu)$.

\begin{theorem}\label{thm:i1}
Let $\mu$ be a co-finite Carleson measure on $\bbR_+$. Then the element $\omega_{\mu}$ is cyclic for the operator $G_{\mu}$. Denote by $\rho$ the spectral measure for $G_{\mu}$ corresponding to the element $\omega_{\mu}$: 
\[
\rho(\Delta)=\jap{{\mathcal E}_{G_\mu}(\Delta)\omega_{\mu},\omega_{\mu}}_{\mu},
\quad \Delta\subset\bbR.
\]
Then there exists a unitary operator $V: L^2(\mu)\to L^2(\rho)$ such that 
\begin{equation}
VG_{\mu}=X_\rho V, \quad 
VX_{\mu}^{-1}=G_\rho V, \quad
V\omega_{\mu}=\1_\rho.
\label{eq:i1}
\end{equation}
\end{theorem}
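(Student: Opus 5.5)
The plan is to reduce Theorem~\ref{thm:i1} to the finite case, Proposition~\ref{thm:PuTr-2}, by a change of measure. Set
\[
\dd\nu(\lambda)=\lambda^{-2}\dd\mu(\lambda).
\]
Co-finiteness of $\mu$ says exactly that $\nu$ is finite. The map $J:L^2(\mu)\to L^2(\nu)$, $(Jf)(\lambda)=\lambda f(\lambda)$, is unitary and sends $\omega_\mu$ to $\1_\nu$. It also intertwines the multiplication operators, $JX_\mu^{-1}=X_\nu^{-1}J$.

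The first step is to compute $JG_\mu J^*$. For $g\in L^2(\nu)$,
\[
(JG_\mu J^*g)(x)=x\int_0^\infty\frac{g(y)}{y(x+y)}\,y^2\dd\nu(y)=\int_0^\infty \frac{xy}{x+y}g(y)\dd\nu(y).
\]
Using $\frac{xy}{x+y}=\bigl(\frac1x+\frac1y\bigr)^{-1}$, we pass to the variable $\lambda\mapsto1/\lambda$. Let $\tilde\mu$ be the pushforward of $\nu$ under $\lambda\mapsto 1/\lambda$, and let $R:L^2(\nu)\to L^2(\tilde\mu)$, $(Rg)(s)=g(1/s)$, be the corresponding unitary. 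Then $RJG_\mu J^*R^*=G_{\tilde\mu}$, $RJX_\mu^{-1}=X_{\tilde\mu}RJ$, and $RJ\omega_\mu=\1_{\tilde\mu}$. Moreover $\tilde\mu(\bbR_+)=\nu(\bbR_+)=\int\lambda^{-2}\dd\mu<\infty$, so $\tilde\mu$ is finite.

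Next I check that $\tilde\mu$ is Carleson:
\[
\tilde\mu([0,a))=\int_{\lambda>1/a}\lambda^{-2}\dd\mu(\lambda)\le Ca.
\]
This holds by integration by parts using $\mu([0,b))\le C_\mu b$:
\[
\int_{1/a}^\infty\lambda^{-2}\dd\mu\le 2\int_{1/a}^\infty \lambda^{-3}\mu([0,\lambda))\dd\lambda\le 2C_\mu a.
\]
This is the same elementary calculation the paper alludes to when it says co-finiteness matters only at zero. Alternatively, the Carleson property follows from the boundedness of $G_{\tilde\mu}$, which is unitarily equivalent to the bounded $G_\mu$ (see the remark after \eqref{eq:g3b}).

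We can now apply Proposition~\ref{thm:PuTr-2} to $\tilde\mu$. It says that $\1_{\tilde\mu}$ is cyclic for $G_{\tilde\mu}$. Its spectral measure is $\sigma_{\tilde\mu}$, and that equals $\rho$ because the spectral measures of unitarily equivalent pairs coincide. It also supplies a unitary $U$ with $UG_{\tilde\mu}=X_\rho U$, $UX_{\tilde\mu}=G_\rho U$ and $U\1_{\tilde\mu}=\1_\rho$. Setting $V=URJ$ and composing the intertwinings gives \eqref{eq:i1}, and cyclicity of $\omega_\mu$ follows from that of $\1_{\tilde\mu}$.

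The main obstacle is the unbounded relation $VX_\mu^{-1}=G_\rho V$. It must be read with the correct domains: $\Dom X_\mu^{-1}$ is mapped by $RJ$ onto $\Dom X_{\tilde\mu}$, and $G_\rho$ is the closure defined in \cite[Section~5]{PuTreil}. Since $RJ$ is unitary and maps these domains onto each other, the identity transfers with its domain intact. I would verify this domain correspondence explicitly, and also handle a possible atom of $\mu$ at $0$, which co-finiteness rules out.
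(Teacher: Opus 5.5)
Your proposal is correct and is essentially the paper's proof. Your composite $RJ$, given by $(RJf)(s)=s^{-1}f(1/s)$, is exactly the paper's unitary $J:L^2(\mu)\to L^2(\mu^{\#})$, your $\tilde\mu$ is the paper's $\mu^{\#}$, and your $V=URJ$ is the paper's $V=UJ$; you additionally spell out the Carleson check for $\tilde\mu$ and the identification $\sigma_{\tilde\mu}=\rho$, which the paper leaves implicit.
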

NB: note the inverse in the second formula in \eqref{eq:i1}, compare with \eqref{eq:g7a}.

This theorem is implicit in \cite{PuTreil}; in the rest of this subsection we explain the proof.

Let us define another measure $\mu^{\#}$ on $\bbR_+$ by 
\[
\int_0^\infty \psi(x)\dd\mu^{\#}(x)
=
\int_0^\infty \psi(1/y)\frac{\dd\mu(y)}{y^2}, 
\quad \psi\in C_{\comp}^\infty(\bbR_+).
\]
It is clear that the map $\mu\mapsto \mu^{\#}$ is an involution on measures. It is also clear that $\mu$ is co-finite if and only if $\mu^{\#}$ is finite and it is easy to check that $\mu^{\#}$ is Carleson if and only if $\mu$ is Carleson. 

Consider the unitary operator
\[
J:L^2(\mu)\to L^2(\mu^{\#}), \quad (Jf)(x)=\frac1xf\left(\frac1x\right).
\]
A direct calculation shows that 
\begin{equation}
G_{\mu^{\#}}J=JG_{\mu}, 
\qquad
JX_{\mu}^{-1}=X_{\mu^{\#}} J,
\qquad
J\omega_{\mu}=\1_{\mu^{\#}}.
\label{eq:i6}
\end{equation}

We will use the involution $\mu\mapsto \mu^{\#}$ in order to deduce Theorem~\ref{thm:i1} (co-finite measures) from Proposition~\ref{thm:PuTr-2} (finite measures). 

We need a word about notation. In Proposition~\ref{thm:PuTr-2}, the operator $U$ and the measure $\sigma$ arise from a finite measure $\mu$. To make our argument clearer, we shall indicate the dependence of $U$ and $\sigma$ on the measure $\mu$ by writing $U_\sigma$ and $\sigma_\mu$.  

\begin{proof}[Proof of Theorem~\ref{thm:i1}]
Let $\mu$ be co-finite and let $\mu^{\#}$ be its finite counterpart. 
By Proposition~\ref{thm:PuTr-2}, there exists a spectral measure $\sigma$ of $G_{\mu^{\#}}$ and a unitary $U: L^2(\mu^{\#})\to L^2(\sigma)$ satisfying the properties \eqref{eq:g7a}, which we write explicitly as
\begin{equation}
UG_{\mu^{\#}}=X_\sigma U, \quad 
UX_{\mu^{\#}}=G_\sigma U, \quad
U\1_{\mu^{\#}}=\1_\sigma.
\label{eq:i6a}
\end{equation}
We set 
\[
\rho=\sigma\quad\text{ and }\quad V=UJ.
\] 
Using \eqref{eq:i6} and \eqref{eq:i6a}, it is immediate to check the requirements \eqref{eq:i1}. 
\end{proof}

\subsection{The unitary map $\Theta_*$}
As in the previous section, we work with the operators $H$ in $L^2(\bbR_+)$, $G_\mu$ in $L^2(\mu)$ and the Laplace transform $L_\mu: L^2(\bbR_+)\to L^2(\mu)$, with 
\[
H=L_\mu^*L_\mu
\quad \text{ and }\quad
G_\mu=L_\mu L_\mu^*.
\]
The polar decomposition of $L_\mu^*$ can be written as
\[
L_\mu^*=T_\mu G_\mu^{1/2}, \quad L_\mu=G_\mu^{1/2}T_\mu^*,
\]
where $T_\mu$ is an isometry from $L^2(\mu)$ onto $\overline{\Ran H}$. 
Let $V:L^2(\mu)\to L^2(\rho)$ be the operator from Theorem~\ref{thm:i1}. 
We define the partial isometry
\[
\Theta_*: L^2(\bbR_+)\to L^2(\rho)
\quad\text{ by }\quad
\Theta_*=VT_\mu^*.
\]
From the definition, we see that $\Ker\Theta_*=\Ker H$ and $\Ran\Theta_*=L^2(\rho)$. 
Our aim is to prove that $\Theta=\Theta_*$. 

We recall that the element $w_\mu\in\overline{\Ran H}$ is defined in Theorem~\ref{thm:b13}. 
\begin{proposition}\label{prp:i2}
The partial isometry $\Theta_*$ satisfies:
\[
\Theta_*H=X_\rho\Theta_*, \quad \Theta_*w_\mu=\1_\rho. 
\]
\end{proposition}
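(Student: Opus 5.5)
The first identity is obtained exactly as in Lemma~\ref{prp:g3}. From the polar decomposition $L_\mu^*=T_\mu G_\mu^{1/2}$ we get $H=T_\mu G_\mu T_\mu^*$, which gives the intertwining $T_\mu^*H=G_\mu T_\mu^*$. Combining this with $VG_\mu=X_\rho V$ from \eqref{eq:i1} yields
\[
\Theta_*H=VT_\mu^*H=VG_\mu T_\mu^*=X_\rho VT_\mu^*=X_\rho\Theta_*.
\]

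For the second identity, recall that $V\omega_\mu=\1_\rho$ by \eqref{eq:i1}. It therefore suffices to prove the co-finite analogue of \cite[Lemma~6.1]{PuTreil}, namely
\[
T_\mu\omega_\mu=w_\mu.
\]
Granting this, and using $T_\mu^*T_\mu=I$ on $L^2(\mu)$, we get $\Theta_*w_\mu=VT_\mu^*T_\mu\omega_\mu=V\omega_\mu=\1_\rho$.

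I will prove $T_\mu\omega_\mu=w_\mu$ directly from the characterisation in Proposition~\ref{thm:b13}. Both elements lie in $\overline{\Ran H}=\Ran T_\mu$, so by uniqueness it is enough to show
\[
\jap{T_\mu\omega_\mu,H^{1/2}g}=\int_0^\infty\!\!\int_0^\infty h(t+s)\overline{g(t)}\dd t\,\dd s
\]
for all $g\in C^\infty_{\comp}(\bbR_+)$.

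The key step is the operator identity $T_\mu^*H^{1/2}=L_\mu$. To see this, note that $L_\mu=G_\mu^{1/2}T_\mu^*$, so $H^{1/2}=(L_\mu^*L_\mu)^{1/2}=T_\mu G_\mu^{1/2}T_\mu^*$. Then $T_\mu^*H^{1/2}=G_\mu^{1/2}T_\mu^*=L_\mu$. Consequently
\[
\jap{T_\mu\omega_\mu,H^{1/2}g}=\jap{\omega_\mu,L_\mu g}_\mu=\int_0^\infty\frac1\lambda\,\overline{(L_\mu g)(\lambda)}\dd\mu(\lambda).
\]

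It remains to compute the right-hand side of the characterisation using \eqref{eq:a6}:
\[
\int_0^\infty h(t+s)\dd s=\int_0^\infty \ee^{-t\lambda}\frac{\dd\mu(\lambda)}{\lambda}=h^{\rm int}(t).
\]
Integrating this against $\overline{g(t)}$ and applying Fubini gives exactly $\int\lambda^{-1}\overline{(L_\mu g)(\lambda)}\dd\mu(\lambda)$, which matches the expression above.

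The only point needing care is the justification of Fubini. The double integral $\int\!\!\int \ee^{-t\lambda}\abs{g(t)}\lambda^{-1}\dd t\,\dd\mu(\lambda)$ is finite because $\omega_\mu\in L^2(\mu)$, which is co-finiteness, and $L_\mu\abs{g}\in L^2(\mu)$, which is the Carleson condition. The argument is a routine adaptation of \cite[Lemma~6.1]{PuTreil}, and I do not expect a serious obstacle.
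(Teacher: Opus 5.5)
Your proof is correct and follows the paper's argument. The first identity comes from $T_\mu^*H=G_\mu T_\mu^*$ together with $VG_\mu=X_\rho V$, and the second from $T_\mu\omega_\mu=w_\mu$ together with $V\omega_\mu=\1_\rho$. The only difference is that the paper simply cites \cite[Lemma~6.2]{PuTreil} for $T_\mu\omega_\mu=w_\mu$. You instead verify it directly via $T_\mu^*H^{1/2}=L_\mu$ and the uniqueness clause of Proposition~\ref{thm:b13}, which is a valid self-contained check, with Fubini correctly justified by co-finiteness and the Carleson condition.
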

\begin{proof}
As in the proof of Lemma~\ref{prp:g3}, from 
$T_{\mu}^*H=G_{\mu}T_{\mu}^*$
and $VG_\mu=X_\rho V$ we obtain the first property $\Theta_*H=X_\rho\Theta_*$. 

The identity
\[
T_\mu\omega_\mu=w_\mu
\]
was proved in \cite[Lemma~6.2]{PuTreil}. It implies that $\omega_\mu=T_\mu^* w_\mu$. From here and $V\omega_\mu=\1_\rho$ we obtain the second property $\Theta_*\omega_\mu=\1_\rho$. 
\end{proof}

\subsection{Proof of Theorem~\ref{thm:b12}}
Our aim is to prove that $\Theta f=\Theta_*f$ for all $f\in C_{\comp}^\infty(\bbR)$. Since $\Theta_*$ has all the declared properties, this will establish Theorem~\ref{thm:b12}. 

As in \eqref{eq:g6a}, we have 
\[
X_\rho^{1/2}\Theta_*
=X_\rho^{1/2}VT_{\mu}^*
=VG_{\mu}^{1/2}T_{\mu}^*
=VL_{\mu}.
\]
Let us compute both sides of this identity on an element $f\in C_{\comp}^\infty(\bbR)$. 
The left-hand side is 
\[
(X_\rho^{1/2}\Theta_*f)(E)=\sqrt{E}(\Theta_*f)(E)
\]
for $\rho$-a.e. $E>0$. Let us compute the right-hand side. We have 
\[
L_\mu f=X_{\mu}^{-1}L_\mu f',
\]
and therefore
\[
VL_{\mu}f=VX_{\mu}^{-1}L_\mu f'. 
\]
Denote for brevity $g=L_\mu f'\in\Ran X_{\mu}$. 
Using \eqref{eq:i1} and similarly to the proof of Lemma~\ref{lma:g7}, we find for $\rho$-a.e. $E>0$:
\begin{align*}
(VX_\mu^{-1} g)(E)
&=(G_\rho Vg)(E)
=\jap{(X_\rho+E)^{-1}Vg,\1_\rho}_{\rho}
\\
&=\jap{V(G_\mu+E)^{-1}g,V\omega_\mu}_{\rho}
=\jap{(G_\mu+E)^{-1}g,\omega_\mu}_{\mu}
\\
&=\jap{g,(G_\mu+E)^{-1}\omega_\mu}.
\end{align*}

Using this, we find
\begin{align*}
(VL_\mu f)(E)&=(VX_{\mu}^{-1}L_\mu f')(E)
=
\jap{L_\mu f',(G_\mu+E)^{-1}\omega_\mu}_\mu
\\
&=
\jap{f',L_\mu^*(G_\mu+E)^{-1}\omega_\mu}
=
\jap{f',(H+E)^{-1}L_\mu^*\omega_\mu}
\\
&=
\jap{f',(H+E)^{-1}h^{\rm int}}
=
-\jap{f,\theta_E}. 
\end{align*}

Putting this together, we find
\[
\sqrt{E}(\Theta_*f)(E)
=
X_\sigma^{1/2}\Theta_*f(E)
=
-\jap{f,\theta_E}.
\]
This proves that $\Theta f=\Theta_*f$. 
The proof of Theorem~\ref{thm:b12} is complete. \qed

\section{Regular case}\label{sec:c}

\subsection{Solutions $\phi_E$ and $\theta_E$}
Throughout this section, we assume that $\mu$ is both finite and co-finite. 
From \eqref{eq:b1} and \eqref{eq:b1a} we find that both $h$ and  $h^{\rm int}$ belong to $L^2(\bbR_+)$.  Moreover:
\begin{lemma}
\label{lma:b0}
The elements $h$ and $h^{\rm int}$ are cyclic for $H^\perp$. 
\end{lemma}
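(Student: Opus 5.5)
The plan is to reduce the statement to the cyclicity results of \cite{PuTreil} recalled in Sections~\ref{sec:g} and \ref{sec:i}. In the regular case $\mu$ is both finite and co-finite, so Propositions~\ref{thm:g-PT1} and \ref{thm:b13} both apply. They give us the cyclic elements $v_\mu$ and $w_\mu$ of $H^\perp$. It therefore suffices to express $h$ and $h^{\rm int}$ in the form $\varphi(H)v_\mu$ and $\psi(H)w_\mu$, with Borel functions $\varphi,\psi$ that vanish nowhere on $(0,\infty)$. A vector of this form is again cyclic. Indeed, in the spectral representation of $H^\perp$ in $L^2(\sigma)$ with $v_\mu\mapsto\1_\sigma$, the vector $\varphi(H)v_\mu$ corresponds to the function $\varphi$. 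This function is cyclic for $X_\sigma$ precisely when $\varphi\neq0$ $\sigma$-a.e.

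Concretely, I would identify $h=H^{1/2}v_\mu$. For $g\in C^\infty_{\comp}(\bbR_+)$, Proposition~\ref{thm:g-PT1} gives
\[
\jap{H^{1/2}v_\mu,g}=\jap{v_\mu,H^{1/2}g}=\int_0^\infty h(t)\overline{g(t)}\dd t=\jap{h,g},
\]
where $h\in L^2(\bbR_+)$. By density of $C^\infty_{\comp}(\bbR_+)$ this yields $h=H^{1/2}v_\mu$. In the same way, Proposition~\ref{thm:b13} gives
\[
\jap{H^{1/2}w_\mu,g}=\int_0^\infty\Bigl(\int_0^\infty h(t+s)\dd s\Bigr)\overline{g(t)}\dd t=\jap{h^{\rm int},g},
\]
since $\int_0^\infty h(t+s)\dd s=h^{\rm int}(t)$. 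Hence $h^{\rm int}=H^{1/2}w_\mu$. As a cross-check, the same identities follow from $L_\mu^*\1_\mu=h$ and $L_\mu^*\omega_\mu=h^{\rm int}$, which were used in the proofs of Lemma~\ref{prp:g3a} and Theorem~\ref{thm:b12}, combined with $L_\mu^*=T_\mu G_\mu^{1/2}$ and $T_\mu\1_\mu=v_\mu$, $T_\mu\omega_\mu=w_\mu$. This route gives $h=T_\mu G_\mu^{1/2}\1_\mu=H^{1/2}T_\mu\1_\mu=H^{1/2}v_\mu$, using $T_\mu G_\mu^{1/2}=H^{1/2}T_\mu$ from \eqref{eq:g4aa}.

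The final step is to show that $H^{1/2}$ preserves cyclicity for $H^\perp$. In the model $L^2(\sigma)$ the vector $H^{1/2}v_\mu$ becomes the function $E^{1/2}$. This is nonzero $\sigma$-a.e. because $\sigma$ lives on $(0,\infty)$: $v_\mu$ lies in $\overline{\Ran H}$, so $\sigma$ has no atom at $0$. Then $\{E^{1/2}p(E)\}$, with $p$ ranging over polynomials, is dense in $L^2(\sigma)$. To see this, take $F\in L^2(\sigma)$ orthogonal to all $E^{1/2}p(E)$. Since $\sigma$ has compact support, polynomials are dense in $L^2$ of the finite measure $E\,\dd\sigma$. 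Hence $E^{1/2}F=0$ $\sigma$-a.e., and therefore $F=0$. The argument for $h^{\rm int}$ with $w_\mu$ and $\rho$ is identical.

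The only mildly delicate point is the identification step, specifically justifying $H^{1/2}T_\mu=T_\mu G_\mu^{1/2}$ on the relevant vectors. The direct weak-form computation above avoids this and needs only $h,h^{\rm int}\in L^2(\bbR_+)$, which holds in the regular case. I expect no real obstacle.
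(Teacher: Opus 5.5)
Your argument is correct, but it takes a different route from the paper's. The paper proves Lemma~\ref{lma:b0} directly from the Lyapunov identities \eqref{eq:c1a}. Since $H$ is trace class in the regular case, cyclicity of $h$ reduces to showing that no eigenvector $f$ of $H^\perp$ is orthogonal to $h$; this also rules out multiple eigenvalues. If $Hf=Ef$ with $E>0$ and $\jap{f,h}=0$, the first identity gives $(H+E)(DH)f=0$. Positivity then forces $DHf=0$, hence $Hf=0$, a contradiction. The second identity handles $h^{\rm int}$ in the same way.

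You instead import the cyclicity of $v_\mu$ and $w_\mu$ from \cite{PuTreil} (Propositions~\ref{thm:g-PT1} and~\ref{thm:b13}). You identify $h=H^{1/2}v_\mu$ and $h^{\rm int}=H^{1/2}w_\mu$ weakly on $C^\infty_{\comp}(\bbR_+)$, which is a clean and correct step. You then check that multiplying by $E^{1/2}$ preserves cyclicity because $\sigma(\{0\})=0$. In that density step, the function orthogonal to all polynomials in $L^2(E\,\dd\sigma)$ is $E^{-1/2}F$, whose norm there equals $\norm{F}_{L^2(\sigma)}$. Alternatively, note that the finite complex measure $E^{1/2}\overline{F}\,\dd\sigma$ has vanishing moments. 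Either way $F=0$ $\sigma$-a.e.

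The trade-off is this. The paper's proof is self-contained and elementary, and it shows off the Lyapunov mechanism that drives the whole paper. However, it relies on the spectrum being discrete. Your proof hands the substantive content to \cite{PuTreil}, but it never uses compactness, so it actually proves more. It shows that $h$ is cyclic for $H^\perp$ whenever $\mu$ is finite, and $h^{\rm int}$ whenever $\mu$ is co-finite. This includes operators with purely continuous spectrum, such as Mehler's and Rosenblum's operators, where the eigenvector argument does not apply directly.
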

This result is contained in \cite{PuTreil}, but in Section~\ref{sec:c3} we recall a direct proof.

For $E\in\bbC\setminus(-\spec(H))$, we define the functions $\phi_E$ and $\theta_E$ on $\bbR_+$ by
\begin{align}
\boxed{\phi_E=-\int_t^\infty \phi_E'(s)\dd s, \quad \phi_E'=(H+E)^{-1}h,
}
\label{eq:b3}
\end{align}
and
\begin{align}
\boxed{\theta_E=
\theta_E(t)=\frac{\dd}{\dd t}\bigl((H+E)^{-1}h^{\rm int}\bigr)(t), \quad t>0.
}
\label{eq:b4}
\end{align}
Here \eqref{eq:b4} is exactly the same definition as \eqref{eq:b14} in the co-finite semi-regular case, while $\phi_E$ differs from $\varphi_E$ of \eqref{eq:b13} by an additive constant:
\begin{equation}
\varphi_E=\phi_E+\int_0^\infty \phi_E'(s)\dd s-1=\phi_E-(\phi_E(0)+1). 
\label{eq:b4d}
\end{equation}
We will come back to \eqref{eq:b4d} shortly. 

\begin{lemma}\label{lma:b5}
For any $E\in\bbC\setminus(-\spec(H))$, the functions $\phi_E$ and $\theta_E$ are in $L^2(\bbR_+)$. Moreover, they are analytic in $E\in\bbC\setminus(-\spec(H))$. The limit $\phi_E(0)$ exists and is finite. 
\end{lemma}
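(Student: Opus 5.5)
The plan is to reduce everything to pointwise bounds on the two $L^2$ functions
\[
\phi_E'=(H+E)^{-1}h,\qquad \Theta_E=(H+E)^{-1}h^{\rm int}.
\]
Both lie in $L^2(\bbR_+)$, since $h,h^{\rm int}\in L^2(\bbR_+)$ and $-E\notin\spec(H)$. As in \eqref{eq:i3}, we have the identities
\[
E\phi_E'=h-H\phi_E',\qquad E\Theta_E=h^{\rm int}-H\Theta_E .
\]
These express each function as a known function plus $H$ applied to an $L^2$ function, and pointwise estimates follow from Cauchy--Schwarz together with the kernel bounds \eqref{eq:b7b}, \eqref{eq:b1}, \eqref{eq:b1a}.

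\textbf{Step 1: the function $\phi_E$.}
\begin{itemize}
\item For $H\phi_E'$ we use $h(t+s)\le C\min\{1,(t+s)^{-2}\}$, which combines \eqref{eq:b1} and \eqref{eq:b1a}. Cauchy--Schwarz gives $|H\phi_E'(t)|\le C\norm{\phi_E'}\,\norm{h(t+\cdot)}_{L^2}\le C'(1+t)^{-3/2}$. Also $h(t)\le C(1+t)^{-2}$.
\item Hence $|\phi_E'(t)|\le C(1+t)^{-3/2}$. So $\phi_E'\in L^1(\bbR_+)$, the integral in \eqref{eq:b3} converges, and $\phi_E(0)=-\int_0^\infty\phi_E'$ exists and is finite.
\item We also get $|\phi_E(t)|\le\int_t^\infty|\phi_E'|\le C(1+t)^{-1/2}$. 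This alone is not square integrable at infinity, so we bootstrap.
\item Integrating the identity gives $E\phi_E(t)=-\int_t^\infty h-\int_t^\infty H\phi_E'$. We have $\int_t^\infty h=h^{\rm int}(t)=O(t^{-1})$ by \eqref{eq:b1b}. Further $\int_t^\infty (H\phi_E')(s)\,ds=\int_0^\infty h^{\rm int}(t+u)\phi_E'(u)\,du$, and by Cauchy--Schwarz with $h^{\rm int}(x)\le C\min\{1,x^{-1}\}$ this is $O((1+t)^{-1/2})$.
\item To improve, insert the bound $|\phi_E'(u)|\le C(1+u)^{-3/2}$ into the last integral. It becomes $O(t^{-1}\log t)$, because $\phi_E'\in L^1$ and $h^{\rm int}(t+u)\le C/(t+u)$.
\item Therefore $\phi_E(t)=O(t^{-1}\log t)$ at infinity and bounded near $0$, so $\phi_E\in L^2(\bbR_+)$.
\end{itemize}

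\textbf{Step 2: the function $\theta_E$.} Lemma~\ref{prp:b11}(i) already gives $|\theta_E(t)|\le Ct^{-2}$ for $t>1$. Near $0$, the bound $t^{-3/2}$ there is not enough and must be improved using finiteness of $\mu$.
\begin{itemize}
\item Differentiating, $E\theta_E=-h-\int_0^\infty h'(t+s)\Theta_E(s)\,ds$.
\item In the finite case, \eqref{eq:b1} gives $|h'(x)|\le C\min\{x^{-1},x^{-3}\}$ and $h$ is bounded.
\item By Cauchy--Schwarz, $\norm{h'(t+\cdot)}_{L^2}^2\le C\int_t^\infty \min\{x^{-2},x^{-6}\}\,dx$, which is $O(t^{-1})$ as $t\to0$. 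This yields only $|\theta_E(t)|\le Ct^{-1/2}$.
\item That is borderline: $t^{-1/2}$ is not square integrable at $0$. The fix is to use a better bound on $\Theta_E$ near $0$, namely $\Theta_E$ bounded there. This follows from its own identity, since $h^{\rm int}$ is bounded when $\mu$ is both finite and co-finite, and $|H\Theta_E(t)|\le\norm{h(t+\cdot)}\,\norm{\Theta_E}\le C$.
\item Then split $\int_0^\infty h'(t+s)\Theta_E(s)\,ds$ into $s<1$ and $s>1$. On $s<1$, use $|\Theta_E|\le C$ with $\int_0^1|h'(t+s)|\,ds\le C|\log t|$. On $s>1$, use Cauchy--Schwarz, which gives a bounded contribution.
\item This gives $|\theta_E(t)|=O(|\log t|)$ near $0$, hence $\theta_E\in L^2(\bbR_+)$.
\end{itemize}

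\textbf{Step 3: analyticity.} The maps $E\mapsto(H+E)^{-1}h$ and $E\mapsto(H+E)^{-1}h^{\rm int}$ are analytic $L^2$-valued functions by the resolvent identity. All the bounds above hold locally uniformly in $E$, since they depend only on $\abs{E}^{-1}$ and on the resolvent norm. Therefore the pointwise formulas
\[
\phi_E(t)=-E^{-1}\Bigl(h^{\rm int}(t)+\int_0^\infty h^{\rm int}(t+u)\phi_E'(u)\,du\Bigr),\qquad E\theta_E=-h-\int_0^\infty h'(t+s)\Theta_E(s)\,ds,
\]
represent $\phi_E$ and $\theta_E$ as compositions of bounded operators with analytic vectors. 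The bootstrap tail bounds, being locally uniform in $E$, let us conclude analyticity in $L^2(\bbR_+)$ via dominated convergence of difference quotients.

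\textbf{Main obstacle.} I expect the hardest point to be these borderline square-integrability bootstraps: $\phi_E$ at infinity and $\theta_E$ at zero. Naive Cauchy--Schwarz gives exactly the non-$L^2$ rates $t^{\mp1/2}$, and one must feed the improved $L^1$ or $L^\infty$ information back in.
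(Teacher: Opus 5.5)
Your proof is correct, but it takes a more laborious route than the paper. The paper rewrites the definitions as \eqref{eq:c35} and \eqref{eq:c36}, i.e.\ $\phi_E=\frac1E\bigl(-h^{\rm int}-(D^{-1}H)(H+E)^{-1}h\bigr)$ and $\theta_E=\frac1E\bigl(-h-(DH)(H+E)^{-1}h^{\rm int}\bigr)$. It then observes that $DH$ and $D^{-1}H$ are Hankel integral operators with kernels $h'(t+s)$ and $-h^{\rm int}(t+s)$. By \eqref{eq:b1} and \eqref{eq:b1b} these kernels are dominated by a multiple of the Carleman kernel $1/(t+s)$, so both operators are bounded on $L^2(\bbR_+)$. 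Membership in $L^2$ and analyticity are then immediate, since these are bounded operators applied to the analytic vectors $(H+E)^{-1}h$ and $(H+E)^{-1}h^{\rm int}$. Finally, $\phi_E(0)$ exists because $\phi_E'\in L^2$ is integrable near $0$.

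Your Step~3 formulas are exactly these representations. Had you noted that the two integral operators are bounded on $L^2$, the bootstraps in Steps~1--2, which you flag as the main obstacle, would have been unnecessary. The borderline rates $t^{\mp1/2}$ come only from applying Cauchy--Schwarz pointwise, which loses information that the Schur test keeps.

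What your route buys is explicit pointwise information. You get that $\phi_E$ is bounded with $O(t^{-1})$ decay; the logarithm is superfluous, since $\phi_E'\in L^1$ gives $\bigl|\int_0^\infty h^{\rm int}(t+u)\phi_E'(u)\,\dd u\bigr|\le Ct^{-1}\norm{\phi_E'}_{L^1}$ directly. You also get $\theta_E=O(1+\abs{\log t})$ near $0$ and $O(t^{-2})$ at infinity. The cost is a separate analyticity argument. That argument is valid as you sketch it: pointwise analyticity in $E$ plus a locally uniform $L^2$ majorant gives weak, hence strong, analyticity.

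One harmless slip: since $E\phi_E'=h-H\phi_E'$ and $\phi_E(t)=-\int_t^\infty\phi_E'(s)\,\dd s$, the correct formula is $E\phi_E(t)=-h^{\rm int}(t)+\int_0^\infty h^{\rm int}(t+u)\phi_E'(u)\,\dd u$. The integral carries a plus sign, not a minus. This does not affect any of your estimates, which use absolute values.
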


Let us now comment on \eqref{eq:b4d}. In the finite semi-regular case, we were forced to integrate from $0$ to $t$ in the definition \eqref{eq:b13} of $\varphi_E$, because for a general value of $E$ the integral $\int_t^\infty \varphi_E'(s)\dd s$ may diverge (see e.g. \eqref{eq:g28}). On the other hand, in the regular case framework of this section, the integral in \eqref{eq:b3} does converge and defines a function $\phi_E$ that is in $L^2(\bbR_+)$ for all $E$. 

We will see shortly that both $\phi_E$ and $\theta_E$ solve the Hankel equation up to a rank-one term. But first we need to introduce the function that enters this rank-one term.

\subsection{The function $\mySigma(E)$}
Below we denote by ${\jap{h}}$ the constant that has different representations, 
\begin{equation}
{\jap{h}}=\int_0^\infty h(t)\dd t=h^{\rm int}(0)=\int_0^\infty \frac{\dd\mu(\lambda)}{\lambda}=2\Tr H.
\label{eq:b4c}
\end{equation}

We introduce an analytic function $\mySigma(E)$, $E\in\bbC\setminus(-\spec(H))$, that plays an important role and arises in several different ways:

\emph{Through the resolvent of $H$ at $-E$:}
\begin{equation}
\boxed{\mySigma(E)=1+\frac1E\bigl(-{\jap{h}}+\jap{(H+E)^{-1}h,h^{\rm int}}\bigr);}
\label{eq:b4a}
\end{equation}

\emph{Through the solutions $\phi_E$ and $\theta_E$:}
\begin{equation}
\boxed{
\begin{aligned}
\mySigma(E)&=1+\phi_E(0),
\\
\mySigma(E)&=1+\int_0^\infty \theta_E(t)\dd t;
\end{aligned}
}
\label{eq:b4ac}
\end{equation}

\emph{Through Fredholm determinants:}
\begin{equation}
\boxed{\mySigma(E)=\frac{\det(I-\tfrac1E H)}{\det(I+\tfrac1E H)}=
\prod_n \frac{1-\tfrac1E E_n}{1+\tfrac1EE_n},}
\label{eq:b4ab}
\end{equation}
where $\{E_n\}$ is the (finite or infinite) sequence of all positive eigenvalues of $H$.

Of course, the fact that all these expressions for $\mySigma(E)$ coincide requires proof. In Section~\ref{sec:c4}, we will prove

\begin{theorem}\label{thm:b1a}
For any $E\in\bbC\setminus(-\spec(H))$, the right-hand sides in \eqref{eq:b4a}, \eqref{eq:b4ac} and \eqref{eq:b4ab} coincide. 
\end{theorem}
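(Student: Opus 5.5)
Throughout, $E\in\bbC\setminus(-\spec(H))$ is fixed. I will prove the three identities in turn: first the two expressions in \eqref{eq:b4ac} against \eqref{eq:b4a}, which are direct computations, and then \eqref{eq:b4a} against the determinant \eqref{eq:b4ab}, which I expect to be the main obstacle.

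\emph{The first line of \eqref{eq:b4ac}.} By definition \eqref{eq:b3},
\[
\phi_E(0)=-\int_0^\infty \phi_E'(s)\dd s=-\jap{(H+E)^{-1}h,\1}.
\]
The function $\1$ is not in $L^2$, so I will rewrite this pairing using the resolvent identity
\[
E(H+E)^{-1}h=h-H(H+E)^{-1}h .
\]
Integrating over $(0,\infty)$ gives $\int_0^\infty h=\jap{h}$. For the remaining term, Fubini gives
\[
\int_0^\infty (Hg)(t)\dd t=\int_0^\infty h^{\rm int}(s)g(s)\dd s \quad\text{for } g\in L^2 .
\]
This needs $h^{\rm int}\in L^2$, together with the absolute convergence $\int\!\!\int h(t+s)\abs{g(s)}\dd s\,\dd t=\int h^{\rm int}\abs{g}<\infty$. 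Hence
\[
E\int_0^\infty (H+E)^{-1}h=\jap{h}-\jap{(H+E)^{-1}h,h^{\rm int}}.
\]
Here I use that $h$ and $h^{\rm int}$ are real, so the inner product is bilinear in effect. It follows that $\phi_E(0)$ equals $\frac1E\bigl(-\jap{h}+\jap{(H+E)^{-1}h,h^{\rm int}}\bigr)$, which is \eqref{eq:b4a}.

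\emph{The second line of \eqref{eq:b4ac}.} We have
\[
\int_0^\infty\theta_E=\bigl[(H+E)^{-1}h^{\rm int}\bigr]_0^\infty=-\Theta_E(0),
\]
using the decay \eqref{eq:i5} (or the $L^2$ property together with smoothness) at infinity. From \eqref{eq:i3},
\[
E\Theta_E(0)=h^{\rm int}(0)-(H\Theta_E)(0)=\jap{h}-\jap{\Theta_E,h}.
\]
This is legitimate because $h\in L^2$ in the finite case, so $(H\Theta_E)(0)=\int h(s)\Theta_E(s)\dd s$. Since $H$ is self-adjoint and the functions are real, $\jap{(H+E)^{-1}h^{\rm int},h}=\jap{(H+E)^{-1}h,h^{\rm int}}$. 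This again gives \eqref{eq:b4a}. The existence of the limit $\phi_E(0)$ comes from Lemma~\ref{lma:b5}.

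\emph{The Fredholm determinant.} Both sides are analytic in $E$ off $-\spec(H)$, so it suffices to prove the identity for large $E>0$ and extend by analytic continuation. I will compare logarithmic derivatives and check the behaviour as $E\to\infty$. From \eqref{eq:b4a}, $\mySigma(E)\to1$ as $E\to\infty$, and the same is true of the product since $H$ is trace class. It therefore suffices to show
\[
\frac{\dd}{\dd E}\log\mySigma(E)=\Tr\bigl[(E-H)^{-1}-E^{-1}\bigr]-\Tr\bigl[(E+H)^{-1}-E^{-1}\bigr].
\]

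The natural tool is the rank-one Lyapunov equations \eqref{eq:c1a} announced in the introduction. With $D$ denoting differentiation, formally $DH+HD=-\jap{\cdot,\delta}h$. The trace-class analogue that avoids $D$ uses $(H+E)^{-1}$ and $(H-E)^{-1}$: the relation
\[
D(H+E)^{-1}=-(H-E)^{-1}D+\text{rank one}
\]
links the resolvents at $E$ and $-E$. I would instead realise $\mySigma$ as a determinant of a rank-one perturbation,
\[
\frac{\det(I-H/E)}{\det(I+H/E)}=\det\bigl((I-H/E)(I+H/E)^{-1}\bigr)=\det\bigl(I-2H(H+E)^{-1}\bigr).
\]
The point is to show, via the Lyapunov relation, that $(E-H)(E+H)^{-1}$ is unitarily (or similarity) equivalent, on a suitable space, to $I+{}$rank one. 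Then $\det(I+\jap{\cdot,a}b)=1+\jap{b,a}$ should reproduce \eqref{eq:b4a}, with $a$ and $b$ built from $h$ and $h^{\rm int}$.

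Concretely, I will try the operator $K$ of integration, $Kf(t)=\int_t^\infty f$. Integration by parts gives
\[
HK^{-1}\text{-type relation}:\quad K H - H K^{*}\text{-type identity}=\jap{\cdot,\1}h^{\rm int}\text{-type term},
\]
and I would then conjugate $E-H$ by $K$ to obtain $-(E+H)+{}$rank one, with $\det$ invariant under the similarity.

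The main obstacle is that $K$ is unbounded on $L^2$, so the similarity must be justified. I would do this by working in the $L^2(\mu)$ model via $G_\mu$ and the multiplication operator $X_\mu$, using $X_\mu G_\mu+G_\mu X_\mu=\jap{\cdot,\1_\mu}\1_\mu$. This identity is valid since $\mu$ is finite, and $X_\mu$ is invertible on the relevant domain by co-finiteness. Conjugating $G_\mu-E$ by $X_\mu$ gives
\[
X_\mu(G_\mu-E)X_\mu^{-1}=-(G_\mu+E)+\jap{X_\mu^{-1}\cdot,\1_\mu}\1_\mu .
\]
Taking determinants then yields
\[
\mySigma(E)=1-\jap{(G_\mu+E)^{-1}\1_\mu,\omega_\mu}_\mu\cdot(\text{sign}),
\]
and translating back via $L_\mu$ (with $L_\mu^*\1_\mu=h$ and $L_\mu^*\omega_\mu=h^{\rm int}$) should match \eqref{eq:b4a}. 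Domain issues in this conjugation are the delicate step; they can be handled first for measures with compact support in $(0,\infty)$, followed by a limiting argument.
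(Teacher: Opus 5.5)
Your verification of \eqref{eq:b4ac} is essentially the paper's own argument. The paper writes $\phi_E,\theta_E$ as in \eqref{eq:c35}, \eqref{eq:c36} and uses $(D^{-1}Hf)(0)=-\jap{f,h^{\rm int}}$ and $\int_0^\infty (DHf)(t)\dd t=-\jap{f,h}$, which is the same Fubini computation you perform.

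For \eqref{eq:b4ab}, your final ($L^2(\mu)$) argument is a genuinely different and valid route. The paper never writes $\mySigma$ as a rank-one determinant. Instead it:
\begin{itemize}
\item derives $\mySigma(E)\mySigma(-E)=1$ from the first Lyapunov identity in \eqref{eq:c1a};
\item uses cyclicity of $h$ and $h^{\rm int}$ to show that the zeros of $\mySigma$ in $\Re E>0$ are exactly the $E_n$, all simple;
\item deduces that $\mySigma$ is inner in the right half-plane;
\item removes $\gamma$, $\ee^{-\alpha E}$, $\ee^{-\beta/E}$ by comparing $1/E$ asymptotics, which requires $\jap{h}=2\Tr H$.
\end{itemize}

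You instead obtain a Weinstein--Aronszajn formula. From $X_\mu G_\mu+G_\mu X_\mu=\jap{\cdot,\1_\mu}_\mu\1_\mu$ one gets $X_\mu(E-G_\mu)X_\mu^{-1}=(G_\mu+E)-\jap{\cdot,\omega_\mu}_\mu\1_\mu$, hence
\[
\det(I-G_\mu/E)=\det(I+G_\mu/E)\bigl(1-\jap{(G_\mu+E)^{-1}\1_\mu,\omega_\mu}_\mu\bigr).
\]
Two further facts finish it. First, $\det(I\mp H/E)=\det(I\mp G_\mu/E)$, since $L_\mu$ is Hilbert--Schmidt. Second,
\[
\jap{(H+E)^{-1}h,h^{\rm int}}=\jap{G_\mu(G_\mu+E)^{-1}\1_\mu,\omega_\mu}_\mu=\jap{h}-E\jap{(G_\mu+E)^{-1}\1_\mu,\omega_\mu}_\mu .
\]
Together these give exactly \eqref{eq:b4a}, so your undetermined sign is $+$.

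The approximation step also goes through. For $\mu_n=\mu|_{[1/n,n]}$ the operator $X_{\mu_n}$ is bounded with bounded inverse. Moreover $0\le H_{\mu_n}\le H_\mu$ and $\Tr(H_\mu-H_{\mu_n})=\frac12\int_{\bbR_+\setminus[1/n,n]}\lambda^{-1}\dd\mu(\lambda)\to0$. This gives trace-norm convergence, hence convergence of both sides at each fixed $E$.

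As for what each route buys: yours is more elementary. It needs no Smirnov-class factorisation, no cyclicity or simplicity of spectrum, and no analytic continuation, and $\jap{h}=2\Tr H$ comes out as a by-product rather than an input. The cost is leaving $L^2(\bbR_+)$ and truncating $\mu$. The paper's route stays in $L^2(\bbR_+)$ and produces $\mySigma(E)\mySigma(-E)=1$ and the inner-function structure of $\mySigma$, which are of independent interest.

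Two things in your sketch need cleaning up.
\begin{itemize}
\item The claim that $(E-H)(E+H)^{-1}$ is unitarily equivalent or similar to $I$ plus a rank-one operator is false once $\rank H\ge2$. Its eigenvalues $(E-E_n)/(E+E_n)$ are all different from $1$, whereas $I$ plus rank one has at most one eigenvalue $\neq1$. What holds, and what you actually use, is that $(E+G_\mu)^{-1}X_\mu(E-G_\mu)X_\mu^{-1}$ is $I$ plus rank one.
\item The logarithmic-derivative plan and the paragraph on the integration operator $K$ are never carried out and can be deleted.
\end{itemize}
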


\begin{remark*}
Coming back to \eqref{eq:b4d} and using the first equation in \eqref{eq:b4ac}, we find that 
\[
\varphi_E=\phi_E-\mySigma(E).
\]
Since by \eqref{eq:b4ab} the function $\mySigma(E)$ vanishes on the spectrum, we find 
\[
\varphi_{E_n}=\phi_{E_n}
\]
at all positive eigenvalues $E_n$ of $H$. 
\end{remark*}

\subsection{Main result in the regular case}

\begin{theorem}\label{thm:b2}
Let $H\not=0$ be a positive Hankel operator such that $\mu$ is both finite and co-finite. 
\begin{enumerate}[\rm (i)]
\item
For $E\in \bbC\setminus(-\spec(H))$, we have 
\begin{equation}
\boxed{
\begin{aligned}
(H-E)\phi_E&=\mySigma(E)h^{\rm int},
\\
(H-E)\theta_E&=\mySigma(E)h.
\end{aligned}}
\label{eq:b5} 
\end{equation}
In particular, if $E>0$ is an eigenvalue of $H$, then both $\phi_E$ and $\theta_E$ are collinear with the corresponding eigenvector. 
\item
For $E$ not in the spectrum of $H$, the elements $\phi_E$ and $\theta_E$ are linearly independent, unless $\rank H=1$. 
\end{enumerate}
\end{theorem}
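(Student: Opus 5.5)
The plan is to verify \eqref{eq:b5} by the same differentiate/integrate-by-parts manipulations as in Sections~\ref{sec:g0} and \ref{sec:i0}. This time we keep track of the boundary terms instead of discarding them, and at the end identify the resulting constants with $\mySigma(E)$ via Theorem~\ref{thm:b1a}.

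\emph{Identity for $\phi_E$.} Write $g=\phi_E'=(H+E)^{-1}h\in L^2$, so that $Hg=h-Eg$. Differentiating $H\phi_E$ under the integral sign and integrating by parts in $s$ gives
\[
(H\phi_E)'(t)=\int_0^\infty h'(t+s)\phi_E(s)\dd s=-\phi_E(0)h(t)-(Hg)(t)=-(1+\phi_E(0))h(t)+Eg(t).
\]
The boundary term at $s=\infty$ vanishes because $\phi_E\in L^2$ (Lemma~\ref{lma:b5}) and $h$ is bounded with $h(t)\le C/t^2$. The boundary term at $s=0$ uses the existence of $\phi_E(0)$ from Lemma~\ref{lma:b5}. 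Since $H\phi_E(t)\to0$ as $t\to\infty$ (Proposition~\ref{thm:b4}), we integrate from $t$ to $\infty$:
\[
H\phi_E=(1+\phi_E(0))h^{\rm int}+E\phi_E .
\]
By \eqref{eq:b4ac} the constant $1+\phi_E(0)$ equals $\mySigma(E)$.

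\emph{Identity for $\theta_E$.} Put $\Theta_E=(H+E)^{-1}h^{\rm int}$, so that $E\Theta_E=h^{\rm int}-H\Theta_E$ as in \eqref{eq:i3}. Because $h$ is bounded in the regular case, $H\Theta_E$ is continuous up to $t=0$, and hence $\Theta_E(0)$ exists. Integrating by parts,
\[
H\theta_E=H\Theta_E'=-\Theta_E(0)h-(H\Theta_E)'=-\Theta_E(0)h+h+E\theta_E ,
\]
where we used $(H\Theta_E)'=(h^{\rm int}-E\Theta_E)'=-h-E\theta_E$. Since $\Theta_E(t)\to0$ at infinity, we have $-\Theta_E(0)=\int_0^\infty\theta_E\,\dd t$. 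Therefore $(H-E)\theta_E=\bigl(1+\int_0^\infty\theta_E\bigr)h=\mySigma(E)h$, again by \eqref{eq:b4ac}.

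\emph{Collinearity at eigenvalues.} If $E=E_n$ is an eigenvalue, then $\mySigma(E_n)=0$ by \eqref{eq:b4ab}, so $\phi_E,\theta_E\in L^2$ are eigenvectors. Both are non-zero. If $\phi_E=0$ then $\phi_E'=0$, which forces $h=0$. If $\theta_E=0$ then $\Theta_E$ is a constant in $L^2$, so $\Theta_E=0$ and $h^{\rm int}=0$. The eigenspace of $H$ at $E_n>0$ is one-dimensional by Proposition~\ref{prp:a0}(ii), so both functions are collinear with the eigenvector.

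\emph{Part (ii).} Suppose $a\theta_E+b\phi_E=0$ with $E\notin\spec(H)$. Applying $H-E$ gives $\mySigma(E)(ah+bh^{\rm int})=0$. By the product formula \eqref{eq:b4ab}, $\mySigma$ vanishes only at the $E_n$, so $\mySigma(E)\ne0$ and hence $ah+bh^{\rm int}=0$.
\begin{enumerate}[\rm (a)]
\item If $a=0$, then $b\phi_E=0$. Since $\phi_E\ne0$ (shown above, as $h\ne0$), $b=0$.
\item If $b=0$, the same argument with $\theta_E\ne0$ gives $a=0$.
\item If $ab\ne0$, then $(h^{\rm int})'=-h=c\,h^{\rm int}$ with $c=b/a$. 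Hence $h^{\rm int}=C\ee^{ct}$, and since $h^{\rm int}\in L^2$ this means $h=C'\ee^{-\lambda_0 t}$ for some $\lambda_0>0$, i.e. $\mu$ is a single point mass. In that case $\rank H=1$.
\end{enumerate}

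The main obstacle is the rigorous justification of the boundary terms and of differentiating under the integral sign, which rests on the decay supplied by finiteness and co-finiteness (Proposition~\ref{prp:b5}) and on the existence of $\phi_E(0)$ and $\Theta_E(0)$. The identification of both constants with one function $\mySigma(E)$, and the location of its zeros, is outsourced to Theorem~\ref{thm:b1a}.
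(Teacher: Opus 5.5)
Your proof is correct, but part (i) follows a different route from the paper.

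The paper applies $H$ to the representations \eqref{eq:c35} and \eqref{eq:c36}. It then uses the ``squared'' Lyapunov identities \eqref{eq:c1a} for the bounded operators $DH$ and $D^{-1}H$, together with \eqref{eq:c27} and \eqref{eq:c30}. The constant appears directly in the defining form \eqref{eq:b4a}, so \eqref{eq:b4ac} is not needed.

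You instead apply the first-order integration-by-parts identity \eqref{eq:c33} directly to $\phi_E$ and to $\Theta_E=(H+E)^{-1}h^{\rm int}$. In effect you make the heuristics of Sections~\ref{sec:g0} and~\ref{sec:i0} rigorous while keeping the boundary terms. Your constants come out as $1+\phi_E(0)$ and $1-\Theta_E(0)=1+\int_0^\infty\theta_E\,\dd t$. You then import their identification with $\mySigma(E)$ from \eqref{eq:b4ac}. This is legitimate and not circular, since that lemma relies only on \eqref{eq:c35} and \eqref{eq:c36}.

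Your version shows more transparently why $\phi_E(0)=-1$ and $\int_0^\infty\theta_E=-1$ characterise eigenvalues. The paper's version stays at the level of bounded operators on $L^2(\bbR_+)$ and so avoids pointwise boundary analysis. On that point, one citation needs fixing: $H\phi_E(t)\to0$ does not follow from Proposition~\ref{thm:b4}, which concerns solutions of the Hankel equation. It follows instead from $\abs{Hf(t)}\le\norm{f}\bigl(\int_t^\infty h(s)^2\dd s\bigr)^{1/2}$ for $f\in L^2$.

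Part (ii) is the paper's argument in substance: you cancel $\mySigma(E)\neq0$, and then $h\parallel h^{\rm int}$ forces a single point mass. You reach that conclusion via an ODE rather than via collinearity of $\dd\mu$ and $\dd\mu/\lambda$. You also handle the degenerate cases $a=0$ or $b=0$ by proving $\phi_E\neq0$ and $\theta_E\neq0$. The paper's proof passes over these cases, since it assumes $\theta_E=c\phi_E$ with $c\neq0$.
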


Let us rephrase the theorem in the way suggestive of the analogy with the Sturm-Liouville problem, see \eqref{eq:a7a}. 

\begin{corollary}
Under the hypothesis of Theorem~\ref{thm:b2}, for any $E>0$ the following are equivalent:
\begin{enumerate}[\rm (i)]
\item
$E$ is an eigenvalue of $H$;
\item
$\phi_E$ satisfies $\phi_E(0)=-1$;
\item
$\theta_E$ satisfies $\int_0^\infty \theta_E(t)\dd t=-1$.
\end{enumerate}
\end{corollary}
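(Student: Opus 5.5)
The plan is to derive the corollary directly from Theorem~\ref{thm:b2}(i) and the second representation \eqref{eq:b4ac} of $\mySigma$. By \eqref{eq:b4ac}, condition (ii) says exactly $\mySigma(E)=0$, and so does condition (iii). Hence it suffices to prove that, for $E>0$, $E$ is an eigenvalue of $H$ if and only if $\mySigma(E)=0$.

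For this I would use the product formula \eqref{eq:b4ab}, which Theorem~\ref{thm:b1a} identifies with \eqref{eq:b4a}--\eqref{eq:b4ac}. In the regular case $H$ is trace class, so $\det(I\pm\tfrac1E H)$ are well-defined entire functions of $1/E$. Since $H\ge0$ and $E>0$, the operator $I+\tfrac1E H$ is invertible, so the denominator does not vanish. The numerator $\det(I-\tfrac1E H)$ vanishes precisely when $1$ is an eigenvalue of $\tfrac1E H$, i.e. when $E\in\{E_n\}$. Therefore $\mySigma(E)=0\iff E$ is an eigenvalue.

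Alternatively, one can avoid determinants and argue only from \eqref{eq:b5}. If $\mySigma(E)=0$, then $(H-E)\phi_E=0$, so $\phi_E$ is an eigenvector, provided $\phi_E\neq0$. This holds because $\phi_E'=(H+E)^{-1}h\neq0$, as $h\neq0$ when $H\neq0$. Conversely, suppose $E$ is an eigenvalue with eigenvector $f$. Pairing the second equation of \eqref{eq:b5} with $f$ gives $0=\jap{\theta_E,(H-E)f}=\mySigma(E)\jap{h,f}$, using self-adjointness and the fact that everything is real. By Lemma~\ref{lma:b0}, $h$ is cyclic for $H^\perp$, so $h$ cannot be orthogonal to an eigenvector with nonzero eigenvalue: the spectral measure of $h$ must charge $\{E\}$. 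Hence $\jap{h,f}\ne0$ and $\mySigma(E)=0$.

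I expect no real obstacle here. The only point needing care is the justification of the pairing (both $\theta_E$ and $f$ lie in $L^2$ by Lemma~\ref{lma:b5}), together with the cyclicity argument in the converse direction.
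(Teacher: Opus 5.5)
Your first argument is the paper's own (implicit) proof: \eqref{eq:b4ac} turns (ii) and (iii) into $\mySigma(E)=0$, and \eqref{eq:b4ab} from Theorem~\ref{thm:b1a} shows that the zeros of $\mySigma$ on $(0,\infty)$ are exactly the eigenvalues of $H$.

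Your alternative via \eqref{eq:b5} is also correct, and it avoids the determinant/inner-function part of Theorem~\ref{thm:b1a} altogether. In its converse direction, the fact you need, $\jap{h,f}\neq0$, is exactly what the proof of Lemma~\ref{lma:b0} establishes. Saying that the spectral measure of $h$ charges $\{E\}$ is not enough on its own; you also need the eigenvalue to be simple, which cyclicity supplies.
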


In the rest of this section, we prove Lemma~\ref{lma:b0}, 
Lemma~\ref{lma:b5}, Theorem~\ref{thm:b1a} and finally the main result Theorem~\ref{thm:b2}. 

\subsection{Notation $D$ and $D^{-1}$. Proof of Lemma~\ref{lma:b5}}
For a differentiable function $f$ on $[0,\infty)$ with a sufficiently fast rate of decay as $t\to\infty$, we denote 
\[
Df=f'
\quad\text{ and }\quad
(D^{-1}f)(t)=-\int_t^\infty f(s)\dd s.
\]
Of course, the operators $D$ and $D^{-1}$ are not well-defined on the whole of $L^2(\bbR_+)$, but they are well-defined on the range of $H$. In fact, the products $DH$ and $D^{-1}H$ are explicit Hankel integral operators
\begin{align}
(DHf)(t)&=\int_0^\infty h'(t+s)f(s)\dd s, 
\label{eq:b2c}
\\
(D^{-1}Hf)(t)&=-\int_0^\infty h^{\rm int}(t+s)f(s)\dd s.
\label{eq:b2d}
\end{align}
By \eqref{eq:b1} and \eqref{eq:b1b}, both operators are bounded.

Using this notation, let us rewrite the definitions \eqref{eq:b3} and \eqref{eq:b4} as follows. We have 
\[
\phi_E'=(H+E)^{-1}h
=\frac1E(E+H-H)(H+E)^{-1}h
=\frac1E(h-H(H+E)^{-1}h),
\]
and therefore 
\begin{equation}
\phi_E=\frac1E\bigl(-h^{\rm int}-(D^{-1}H)(H+E)^{-1}h\bigr).
\label{eq:c35}
\end{equation}
Similarly, 
\[
(H+E)^{-1}h^{\rm int}
=\frac1E(E+H-H)(H+E)^{-1}h^{\rm int}
=\frac1E(h^{\rm int}-H(H+E)^{-1}h^{\rm int}),
\]
and therefore 
\begin{equation}
\theta_E=\frac1E\bigl(-h-(DH)(H+E)^{-1}h^{\rm int}\bigr).
\label{eq:c36}
\end{equation}
\begin{proof}[Proof of Lemma~\ref{lma:b5}]
From \eqref{eq:c35} and \eqref{eq:c36} it is immediate that $\phi_E$ and $\theta_E$ belong to $L^2(\bbR_+)$ and are analytic in $E$. Moreover, since $\phi_E'$ is also in $L^2(\bbR_+)$, the limit $\phi_E(0)$ is well-defined and finite. 
\end{proof}
\subsection{Lyapunov identities. Cyclicity of $h$ and $h^{\rm int}$}
\label{sec:c3}
For a differentiable function $f$ on $[0,\infty)$ with a sufficiently fast rate of decay as $t\to\infty$, integrating by parts, we find 
\[
\frac{\dd}{\dd t}\int_0^\infty h(t+s)f(s)\dd s
+
\int_0^\infty h(t+s)f'(s)\dd s
=
-f(0)h(t),
\]
which we write as
\begin{equation}
HDf+DHf=-f(0)h. 
\label{eq:c33}
\end{equation}
Similarly, we find 
\begin{align*}
\int_t^\infty\left\{\int_0^\infty h(s+u)f(u)\dd u\right\}\dd s
+&\int_0^\infty h(t+u)\left\{\int_u^\infty f(s)\dd s\right\}\dd u
\\
&=
\left(\int_0^\infty f(s)\dd s\right)h^{\rm int}(t)
\end{align*}
which we rewrite as
\begin{equation}
HD^{-1}f+D^{-1}Hf=-\left(\int_0^\infty f(s)\dd s\right)h^{\rm int}.
\label{eq:c34}
\end{equation}
Applying these identities to functions $f=Hg$, we obtain the pair of Lyapunov identities
\begin{equation}
\boxed{
\begin{aligned}
H(DH)+(DH)H&=-\jap{\cdot,h}h,
\\
H(D^{-1}H)+(D^{-1}H)H&=-\jap{\cdot,h^{\rm int}}h^{\rm int}.
\end{aligned}}
\label{eq:c1a}
\end{equation}
Variants of these identities are known in control theory \cite{Ober,MPT}.
As an easy consequence of \eqref{eq:c1a}, we immediately obtain the proof of the cyclicity of $h$ and $h^{\rm int}$:

\begin{proof}[Proof of Lemma~\ref{lma:b0}]
First let us check that $h$ is cyclic.  Suppose, to get a contradiction, that $f$ is a non-zero eigenvector of $H^\perp$ such that $f$ is orthogonal to $h$. Write $Hf=Ef$ with $E>0$, and apply the first Lyapunov identity \eqref{eq:c1a} to $f$. The right-hand side vanishes, and we obtain 
\[
(H+E)(DH)f=0,
\]
i.e. $(DH)f$ is an eigenvector of $H$ with an eigenvalue $-E$. Since $H\geq0$, this implies $DHf=0$, hence $Hf=0$, which is a contradiction.

The proof of cyclicity of $h^{\rm int}$ proceeds in the same way by using the second Lyapunov identity of \eqref{eq:c1a} instead of the first one. 
\end{proof}

\subsection{The function $\mySigma(E)$}\label{sec:c4}

Here we prove Theorem~\ref{thm:b1a}. To make the logic clear, we shall regard \eqref{eq:b4a} as the definition of $\mySigma(E)$, and \eqref{eq:b4ac} and \eqref{eq:b4ab} as the statements to be proved. 

To make our notation more concise, for $E\in\bbC\setminus(-\spec(H))$ let us denote
\[
a(E)=\jap{(H+E)^{-1}h,h^{\rm int}}.
\]
With this notation, our definition \eqref{eq:b4a} of $\mySigma(E)$ becomes 
\begin{equation}
\mySigma(E)=1+\frac1E(-{\jap{h}}+a(E)).
\label{eq:c13a}
\end{equation}

Since the integral kernel of $H$ is real-valued and the functions $h$ and $h^{\rm int}$ are real, we find that $a(E)$ is real on the real axis. This implies that $a(E)=\overline{a(\overline{E})}$ in the complex plane, or equivalently 
\begin{equation}
a(E)=\jap{(H+E)^{-1}h^{\rm int},h}.
\label{eq:c13}
\end{equation}

\begin{lemma}
Let $\mySigma(E)$ be defined by \eqref{eq:c13a}; then the identities \eqref{eq:b4ac} hold true for all $E\in\bbC\setminus(-\spec(H))$.
\end{lemma}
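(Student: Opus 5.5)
We have to show that $1+\phi_E(0)$ and $1+\int_0^\infty\theta_E(t)\,\dd t$ both equal the right-hand side of \eqref{eq:c13a}, i.e. that
\[
\phi_E(0)=\frac1E\bigl(-\jap{h}+a(E)\bigr)=\int_0^\infty\theta_E(t)\,\dd t .
\]
The plan is to evaluate both quantities from the explicit representations \eqref{eq:c35} and \eqref{eq:c36}, which are valid for all $E\in\bbC\setminus(-\spec(H))$. In each case the evaluation reduces to computing a Hankel integral operator ``at the boundary'' and then using the self-adjointness of $H$, together with the two equivalent forms of $a(E)$.

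\emph{First identity.} Evaluate \eqref{eq:c35} at $t=0$. By definition $h^{\rm int}(0)=\jap{h}$, and by \eqref{eq:b2d}
\[
\bigl((D^{-1}H)g\bigr)(0)=-\int_0^\infty h^{\rm int}(s)g(s)\,\dd s=-\jap{g,h^{\rm int}},
\]
because $h^{\rm int}$ is real. Taking $g=(H+E)^{-1}h$ gives
\[
\phi_E(0)=\tfrac1E\bigl(-\jap{h}+\jap{(H+E)^{-1}h,h^{\rm int}}\bigr)=\tfrac1E\bigl(-\jap{h}+a(E)\bigr).
\]
This needs one justification: the value at $t=0$ must be the limit $\phi_E(0)$ guaranteed by Lemma~\ref{lma:b5}. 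The kernel $h^{\rm int}(t+s)$ is bounded by $h^{\rm int}(s)\in L^2$ and is continuous in $t$, so continuity at $t=0$ of $(D^{-1}H)g$ follows by dominated convergence.

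\emph{Second identity.} Integrate \eqref{eq:c36} over $(0,\infty)$. First, $\int_0^\infty h=\jap{h}$. Next, for $g\in L^2$ the function $(DH)g=(Hg)'$ has
\[
\int_0^\infty (Hg)'(t)\,\dd t=\lim_{t\to\infty}(Hg)(t)-(Hg)(0)=-\int_0^\infty h(s)g(s)\,\dd s=-\jap{g,h}.
\]
This uses that $(Hg)(t)\to0$ as $t\to\infty$ (since $h\le C/t$) and that $(Hg)(0)=\jap{g,h}$ is a genuine limit (since $h$ is bounded and in $L^2$). Taking $g=(H+E)^{-1}h^{\rm int}$ and applying \eqref{eq:c13} gives
\[
\int_0^\infty\theta_E=\tfrac1E\bigl(-\jap{h}+a(E)\bigr),
\]
as required.

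The main obstacle is the justification of the integrals. The improper integral $\int_0^\infty\theta_E$ must be shown to converge, which it does because $\theta_E$ is the derivative of $(H+E)^{-1}h^{\rm int}$; that function is smooth, tends to $0$ at infinity, and has a finite limit at $0$ (by \eqref{eq:i3}, since $h^{\rm int}$ and $Hg$ extend continuously to $t=0$). Equivalently, one can bypass \eqref{eq:c36} entirely: write
\[
\int_0^\infty\theta_E=-\bigl((H+E)^{-1}h^{\rm int}\bigr)(0)
\]
and compute this value from \eqref{eq:i3} at $t=0$, which gives $\frac1E\bigl(\jap{h}-a(E)\bigr)$ with the opposite sign convention reconciled by the minus. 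I would use this route to cross-check the signs. Finally, the identities extend from real $E$ to all of $\bbC\setminus(-\spec(H))$ because both sides are analytic there, so analytic continuation completes the proof.
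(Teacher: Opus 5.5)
Your proposal is correct and follows essentially the same route as the paper. You evaluate \eqref{eq:c35} at $t=0$ using $(D^{-1}Hf)(0)=-\jap{f,h^{\rm int}}$, and you integrate \eqref{eq:c36} using $\int_0^\infty (DHf)(t)\,\dd t=-\jap{f,h}$ together with \eqref{eq:c13}; your closing analytic-continuation step is unnecessary, since none of your computations uses that $E$ is real.
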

\begin{proof}
From \eqref{eq:b2d} it is clear that for any $f\in L^2(\bbR_+)$, 
\[
(D^{-1}Hf)(0)=-\jap{f,h^{\rm int}}. 
\]
Using this and \eqref{eq:c35}, we find
\[
1+\phi_E(0)=1+\frac1E\left(-h^{\rm int}(0)+\jap{(H+E)^{-1}h,h^{\rm int}}\right)=\mySigma(E).
\]
Similarly, from \eqref{eq:b2c} we see that for any $f\in L^2(\bbR_+)$, 
\[
\int_0^\infty (DHf)(t)\dd t=-\jap{f,h},
\]
and therefore, by \eqref{eq:c36}, 
\[
1+\int_0^\infty \theta_E(t)\dd t
=1+\frac1E\left(-\int_0^\infty h(t)\dd t+\jap{(H+E)^{-1}h^{\rm int},h}\right)=\mySigma(E).
\]
The proof is complete. 
\end{proof}

For the proof of \eqref{eq:b4ab}, we need an important auxiliary identity. 

\begin{lemma}\label{lma:b1}
The identity 
\begin{equation}
\boxed{\mySigma(E)\mySigma(-E)=1 }
\label{eq:c15}
\end{equation}
holds true whenever $E\notin\spec(H)$ and $-E\notin\spec(H)$. 
\end{lemma}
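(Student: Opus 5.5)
The plan is to derive \eqref{eq:c15} from the second Lyapunov identity in \eqref{eq:c1a}, by sandwiching it between the resolvents at $E$ and at $-E$. We write $L=D^{-1}H$; by \eqref{eq:b2d} this is a bounded Hankel integral operator with the real kernel $-h^{\rm int}(t+s)$, so it is self-adjoint. The identity $HL+LH=-\jap{\cdot,h^{\rm int}}h^{\rm int}$ can be rewritten as
\[
(H-E)L+L(H+E)=-\jap{\cdot,h^{\rm int}}h^{\rm int}.
\]
Since $\pm E\notin\spec(H)$, we multiply on the left by $(H-E)^{-1}$ and on the right by $(H+E)^{-1}$. This gives
\[
L(H+E)^{-1}+(H-E)^{-1}L=-\jap{\cdot,(H+\overline{E})^{-1}h^{\rm int}}\,(H-E)^{-1}h^{\rm int}.
\]
We apply this to $h$ and pair the result with $h$. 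By \eqref{eq:c13} and the reality of $a$, the right-hand side becomes $-a(E)a(-E)$. For the second term on the left we use self-adjointness of $L$ and of $H$: $\jap{(H-E)^{-1}Lh,h}=\jap{L(H-E)^{-1}h,h}$ up to the real-symmetry conventions used for \eqref{eq:c13}.

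The key auxiliary computation is a formula for $\jap{Lg,h}$ when $g\in L^2(\bbR_+)$. Writing $h=-(h^{\rm int})'$ and integrating by parts, the boundary term at $0$ is evaluated using $(Lg)(0)=-\jap{g,h^{\rm int}}$ (read off from \eqref{eq:b2d}), and $DLg=Hg$. This yields
\[
\jap{Lg,h}=-\jap{g,h^{\rm int}}\jap{h}+\jap{g,Hh^{\rm int}}.
\]
For $g=(H\pm E)^{-1}h$ we write $H(H\pm E)^{-1}=I\mp E(H\pm E)^{-1}$. We also use $\jap{h,h^{\rm int}}=\tfrac12 h^{\rm int}(0)^2=\tfrac12\jap{h}^2$. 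Together these give
\[
\jap{L(H\pm E)^{-1}h,h}=-a(\pm E)\jap{h}+\tfrac12\jap{h}^2\mp E\,a(\pm E).
\]

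Adding the two terms, the sandwiched identity becomes the algebraic relation
\[
E\bigl(a(E)-a(-E)\bigr)=a(E)a(-E)-\jap{h}\bigl(a(E)+a(-E)\bigr)+\jap{h}^2
=\bigl(a(E)-\jap{h}\bigr)\bigl(a(-E)-\jap{h}\bigr).
\]
Expanding $\mySigma(E)\mySigma(-E)$ from \eqref{eq:c13a} gives
\[
\mySigma(E)\mySigma(-E)=1+E^{-2}\Bigl[E\bigl(a(E)-a(-E)\bigr)-\bigl(a(E)-\jap{h}\bigr)\bigl(a(-E)-\jap{h}\bigr)\Bigr],
\]
and the bracket vanishes by the relation above, which proves \eqref{eq:c15}.

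I expect the main obstacle to be justifying the integration by parts in the formula for $\jap{Lg,h}$ for general $g\in L^2(\bbR_+)$. This requires decay of $(Lg)(t)h^{\rm int}(t)$ at infinity and existence of $(Lg)(0)$, which should follow from \eqref{eq:b1b} and Cauchy--Schwarz. The complex-conjugation bookkeeping for nonreal $E$ also needs care. One option is to prove the identity for real $E$ first and extend by analyticity; this is legitimate provided $\pm E\notin\spec(H)$ for the real $E$ used, e.g.\ $E$ large enough or on a real interval in a gap of $\spec(H)$.
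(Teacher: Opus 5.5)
Your proof is correct and is essentially the paper's argument with the roles of $h$ and $h^{\rm int}$ interchanged. The paper sandwiches the first Lyapunov identity $H(DH)+(DH)H=-\jap{\cdot,h}h$ between $(H+E)^{-1}$ and $(H-E)^{-1}$, tests it on $h^{\rm int}$ and uses $(DH)h^{\rm int}=Hh-\jap{h}h$; you do the same with $D^{-1}H$ and $h$, and reach the identical relation $E\bigl(a(E)-a(-E)\bigr)=\bigl(a(E)-\jap{h}\bigr)\bigl(a(-E)-\jap{h}\bigr)$.

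The obstacle you flag does not really arise. Your formula for $\jap{Lg,h}$ is just $\jap{g,Lh}$ with $Lh=Hh^{\rm int}-\jap{h}h^{\rm int}$, which is \eqref{eq:c34} applied to the single function $f=h$ (the paper's \eqref{eq:c30}), so no integration by parts against a general $g$ is needed. The complex-$E$ bookkeeping works directly because all kernels and $h$, $h^{\rm int}$ are real. Alternatively, your continuation from large real $E$ is valid, since $\bbC\setminus\bigl(\spec(H)\cup(-\spec(H))\bigr)$ is connected.
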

\begin{proof}
We prepare two identities. 
Applying \eqref{eq:c33} to $f=h^{\rm int}$, we find 
\begin{equation}
(DH)h^{\rm int}=Hh-{\jap{h}}h.
\label{eq:c27}
\end{equation}
Next, since $(h^{\rm int})'=-h$, we have
\[
\jap{h,h^{\rm int}}=\jap{h^{\rm int},h}=
-\int_0^\infty (h^{\rm int})'(t)h^{\rm int}(t)\dd t=\frac12 {\jap{h}}^2.
\]
Now we embark on the proof of \eqref{eq:c15}. 
Adding and subtracting $E(DH)$ in the left-hand side of the first Lyapunov identity  \eqref{eq:c1a}, we find 
\[
(H+E)(DH)+(DH)(H-E)=-\jap{\cdot,h}h.
\]
Multiplying by $(H+E)^{-1}$ on the left and by $(H-E)^{-1}$ on the right, we get
\[
DH(H-E)^{-1}+(H+E)^{-1}DH=-\jap{(H-E)^{-1}\cdot,h}(H+E)^{-1}h.
\]
Evaluating the quadratic form of both sides on $h^{\rm int}$,  we find
\begin{align}
\jap{DH(H-E)^{-1}h^{\rm int},h^{\rm int}}
&+
\jap{(H+E)^{-1}DH h^{\rm int},h^{\rm int}}
\notag
\\
&=
-\jap{(H-E)^{-1}h^{\rm int},h}\jap{(H+E)^{-1}h,h^{\rm int}}.
\label{eq:c29}
\end{align}
Consider the first term on the left-hand side here. 
Since $DH$ is an integral operator with a real symmetric kernel, it is self-adjoint. 
Using this fact and \eqref{eq:c27}, we find 
\begin{align*}
\jap{DH(H-E)^{-1}h^{\rm int},h^{\rm int}}
&=
\jap{(H-E)^{-1}h^{\rm int},(DH)h^{\rm int}}
\\
&=\jap{(H-E)^{-1}h^{\rm int},Hh}
-
{\jap{h}}
\jap{(H-E)^{-1}h^{\rm int},h}.
\end{align*}
Writing $Hh=(H-E)h+Eh$ here and recalling the definition of $a(E)$, we find
\begin{align*}
\jap{(H-E)^{-1}h^{\rm int},Hh}
-
{\jap{h}}
\jap{(H-E)^{-1}h^{\rm int},h}
&=\jap{h^{\rm int},h}+Ea(-E)-{\jap{h}}a(-E)
\\
&=\frac12{\jap{h}}^2+Ea(-E)-{\jap{h}}a(-E).
\end{align*}
Similarly, we transform the second term on the left-hand side of \eqref{eq:c29} as follows:
\begin{align*}
\jap{(H+E)^{-1}DH h^{\rm int},h^{\rm int}}
&=
\jap{(H+E)^{-1}Hh,h^{\rm int}}
-
{\jap{h}}\jap{(H+E)^{-1}h,h^{\rm int}}
\\
&=\frac12{\jap{h}}^2-Ea(E)-{\jap{h}}a(E).
\end{align*}
Substituting this into \eqref{eq:c29}, we find
\begin{align*}
{\jap{h}}^2+Ea(-E)-{\jap{h}}a(-E)
-Ea(E)-{\jap{h}}a(E)
=
-a(-E)a(E).
\end{align*}
Recalling \eqref{eq:c13a}, after elementary algebra this rewrites as  \eqref{eq:c15}.
\end{proof}

\begin{lemma}
Identity \eqref{eq:b4ab} holds true. 
\end{lemma}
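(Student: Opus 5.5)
We need to prove that $\mySigma(E)$, defined by \eqref{eq:c13a}, equals $\prod_n (1-E_n/E)/(1+E_n/E)$, i.e. the ratio of Fredholm determinants (well-defined since $H$ is trace class in the regular case). The plan is to compare zeros, poles and growth of two meromorphic functions, using the functional equation \eqref{eq:c15} as the key input.

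\textbf{Step 1: analytic structure of $\mySigma$.} Since $H$ has discrete spectrum accumulating only at $0$, the resolvent $(H+E)^{-1}$ is meromorphic in $\bbC\setminus\{0\}$ with simple poles at $E=-E_n$. Hence $a(E)$, and with it $\mySigma$, is meromorphic on $\bbC\setminus\{0\}$ with at most simple poles at $-E_n$. The residue of $a$ at $-E_n$ is $\jap{P_n h,h^{\rm int}}$, where $P_n$ is the spectral projection onto the eigenspace of $E_n$.

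\textbf{Step 2: poles and zeros of $\mySigma$.} First I would show that the residue at each $-E_n$ is non-zero, so $\mySigma$ genuinely has a simple pole there. The eigenspaces are one-dimensional by Proposition~\ref{prp:a0}(ii). For a normalised eigenvector $f_n$, the residue is $\jap{h,f_n}\overline{\jap{h^{\rm int},f_n}}$. Both factors are non-zero by the cyclicity of $h$ and $h^{\rm int}$ (Lemma~\ref{lma:b0}).

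Next, \eqref{eq:c15} gives $\mySigma(E)=1/\mySigma(-E)$ for $E\notin\pm\spec(H)$. By analytic continuation, $\mySigma$ has a simple zero at each $E_n$ and no other zeros in $\bbC\setminus\{0\}$. Indeed, a zero at $E_0\notin\spec(H)$ would force $\mySigma(-E_0)=\infty$, which is impossible away from the poles $-E_n$.

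\textbf{Step 3: comparing with the determinant ratio.} Let $R(E)$ be the right-hand side of \eqref{eq:b4ab}. It is meromorphic in $\bbC\setminus\{0\}$ with exactly the same simple zeros and poles, and it satisfies $R(E)R(-E)=1$ and $R(E)\to1$ as $E\to\infty$. Also $\mySigma(E)\to1$ as $|E|\to\infty$, since $a(E)=O(1/|E|)$ away from the negative axis. Therefore $F=\mySigma/R$ is analytic and non-vanishing in $\bbC\setminus\{0\}$, with $F(E)F(-E)=1$ and $F(\infty)=1$.

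\textbf{Main obstacle: the point $0$.} The difficulty is ruling out an essential singularity of $F$ at $E=0$, where the eigenvalues accumulate. I would work in the variable $z=1/E$. In this variable, $\det(I\mp zH)$ are entire functions of order at most $1$ with zeros exactly at $\pm1/E_n$ (since $H$ is trace class).

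The plan is to show that $\Psi(z):=\mySigma(1/z)\det(I+zH)$ is entire. It has removable singularities at the points $-1/E_n$, and some growth bound is needed. I would use the resolvent bound in the form $\det(I+zH)(I+zH)^{-1}$, which is entire with $\|\det(I+zH)(I+zH)^{-1}\|\le \exp(\|zH\|_1)$. Writing $\mySigma(1/z)=1-z\jap{h}+z\jap{(I+zH)^{-1}h,h^{\rm int}}$ shows that $\Psi$ is entire of exponential type.

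Then $\Psi(z)/\det(I-zH)$ is entire, non-vanishing, and of finite order (a quotient of finite-order functions with matching zeros), so it equals $e^{\alpha+\beta z}$. The relation $F(E)F(-E)=1$ gives $e^{2\alpha}=1$. Evaluating at $z=0$ (i.e. $E=\infty$), where both sides equal $1$, gives $\alpha=0$. Finally, the real-valuedness of $\mySigma$ on the real axis, together with a bounded-growth comparison along the imaginary $z$-axis, should give $\beta=0$. This last point is where I expect the most care to be needed; alternatively, one could expand both sides to first order at $z=0$ and compare the coefficients using $\jap{h}=2\Tr H$ from \eqref{eq:b4c}.
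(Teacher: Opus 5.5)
Your argument is correct and takes a different route from the paper. The paper stays in the right half-plane. There the estimate $\abs{a(E)}\le C/\abs{E}$ puts $\mySigma$ in the Smirnov class. The identity \eqref{eq:c15}, together with $\overline{\mySigma(E)}=\mySigma(\overline{E})$, gives $\abs{\mySigma}=1$ on $\ii\bbR\setminus\{0\}$, so the outer factor is trivial. Analyticity across $\ii\bbR\setminus\{0\}$ reduces the singular inner factor to $\ee^{-\alpha E}\ee^{-\beta/E}$, and the constants are fixed from $\mySigma(E)=1-\jap{h}/E+O(E^{-2})$.

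You instead use compactness of $H$ to continue $\mySigma$ meromorphically to $\bbC\setminus\{0\}$, and pass to $z=1/E$ so that the only bad point $E=0$ becomes $z=\infty$. The Fredholm bound $\norm{\det(I+zH)(I+zH)^{-1}}\le\ee^{\abs{z}\Tr H}$ controls growth there, and Hadamard's theorem replaces inner--outer factorisation. The paper's possible singular factor $\ee^{-\beta/E}$ reappears as your $\ee^{\beta z}$. Your version uses \eqref{eq:c15} only to locate zeros, with no unimodularity on the imaginary axis, and relies on classical entire-function theory, at the price of leaning harder on the trace-class structure. The paper's version needs only half-plane information about $\mySigma$ and no determinant estimates. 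Both rest on the same three inputs: cyclicity of $h$ and $h^{\rm int}$ (simple, non-cancelling poles), the functional equation \eqref{eq:c15}, and the trace identity $\jap{h}=2\Tr H$.

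Some corrections are needed.

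\emph{The formula for $\mySigma(1/z)$.} Since $(H+E)^{-1}=z(I+zH)^{-1}$, one has $\mySigma(1/z)=1-z\jap{h}+z^2\jap{(I+zH)^{-1}h,h^{\rm int}}$, with $z^2$ rather than $z$. With your formula the linear Taylor coefficient at $z=0$ would come out wrong.

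\emph{Fixing $\beta$.} Your first suggestion cannot work. For real $\beta$ the factor $\ee^{\beta z}$ is unimodular on the imaginary $z$-axis, as are $\mySigma$ and the determinant ratio there. So real-valuedness plus growth along $\ii\bbR$ yields only $\beta\in\bbR$. Your alternative is what decides it. We have $\Psi(z)=1+(\Tr H-\jap{h})z+O(z^2)$ and $\det(I-zH)\ee^{\beta z}=1+(\beta-\Tr H)z+O(z^2)$. Comparing linear coefficients gives $\beta=2\Tr H-\jap{h}=0$ by \eqref{eq:b4c}, which is exactly the paper's final step.

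\emph{Zero-freeness of the quotient.} State explicitly that $\Psi(0)=1$ and $\Psi(-1/E_n)\neq0$. The latter holds because the simple pole of $\mySigma(1/z)$ meets a simple zero of $\det(I+zH)$. This is what makes $\Psi/\det(I-zH)$ zero-free, so that Hadamard applies.
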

\begin{proof}
Let us denote 
\begin{equation}
\mySigma_{\rm Blaschke}(E)=\prod_n \frac{1-\frac1E E_n}{1+\frac1E E_n}; 
\label{eq:c13b}
\end{equation}
we need to prove that 
\[
\mySigma(E)=\mySigma_{\rm Blaschke}(E)
\]
for all $E\in\bbC\setminus(-\spec(H))$.
By the definition \eqref{eq:c13}, \eqref{eq:c13a}, the function $\mySigma(E)$ is analytic in the right half-plane $\Re E>0$. Let us discuss its analytic structure. 

\emph{Zeros of $\mySigma$:}
since both $h$ and $h^{\rm int}$ are cyclic for $H^\perp$, we see that $a(-E)$ has poles at every positive eigenvalue $E_n$ of $H$ (i.e. there cannot be any cancellation in the inner product), and all these poles are simple. By the identity \eqref{eq:c15}, it follows that $\mySigma(E)$ has a simple zero at every positive eigenvalue of $H$. There are no other zeros of $\mySigma(E)$ in the open right half-plane. Thus, $\mySigma_{\rm Blaschke}$ is exactly the Blaschke product constructed from the zeros of $\mySigma$.

\emph{Canonical factorisation:}
the function $a(E)$ satisfies
\begin{equation}
\abs{a(E)}\leq C\norm{(H+E)^{-1}}\leq C/E, \quad \Re E>0, 
\label{eq:c31}
\end{equation}
and therefore $\mySigma(E)$ satisfies
\[
\abs{\mySigma(E)}\leq 1+C\abs{E}^{-2}, \quad \Re E>0.
\]
Thus, $\mySigma(E)$ is a function of the \emph{Smirnov class} 
\cite[Chapter 3]{Nikolski} in the right half-plane. 
As such, $\mySigma(E)$ can be written as a \emph{canonical product}
\[
\mySigma=\gamma \mySigma_{\rm out}\mySigma_{\rm Blaschke}\mySigma_{\rm sing},
\]
where $\gamma$ is a unimodular complex number,  $\mySigma_{\rm out}$ is the outer factor of $\mySigma$, 
\[
\mySigma_{\rm out}(E)=\exp\left\{\frac{\ii }{\pi}\int_{-\infty}^\infty \left(\frac1{\ii E-t}+\frac{t}{t^2+1}\right)\log\abs{\mySigma(\ii t)}\dd t\right\}, 
\quad \Re E>0,
\]
$\mySigma_{\rm Blaschke}$ is the Blaschke product \eqref{eq:c13b} over all zeros of $\mySigma$ in the right half-plane, and $\mySigma_{\rm sing}$ is a singular inner function, 
\[
\mySigma_{\rm sing}(E)=\exp\left\{-\frac{\ii }{\pi}\int_{-\infty}^\infty \left(\frac1{\ii E-t}+\frac{t}{t^2+1}\right)\dd \nu(t)\right\}\ee^{-\alpha E}, 
\quad \Re E>0,
\]
where $\nu$ is a singular measure on $\bbR$ and $\alpha\geq0$. 
Let us discuss these factors in turn. 

\emph{The outer factor $\mySigma_{\rm out}$:}
as already discussed,  $\mySigma(E)$ is real on the real line and therefore satisfies $\overline{\mySigma(E)}=\mySigma(\overline{E})$ in the complex plane. Thus, identity \eqref{eq:c15} implies that 
\[
\abs{\mySigma(\ii E)}=1, \quad E\in\bbR\setminus\{0\}.
\]
This means that the outer factor $\mySigma_{\rm out}(E)$ in the canonical factorisation equals $1$. In other words, $\mySigma(E)$ is an inner function in the right half-plane.

\emph{The singular inner factor $\mySigma_{\rm sing}$:}
we have established that the canonical factorisation for $\mySigma$ reduces to 
\[
\mySigma=\gamma \mySigma_{\rm Blaschke} \mySigma_{\rm sing}.
\]
From the definition it is evident that $\mySigma(E)$ is analytic in the neighbourhood of every point on the imaginary axis except possibly the origin. Thus, the singular measure $\nu$ must be concentrated at $0$. In other words, we have
\begin{equation}
\mySigma(E)=\gamma \mySigma_{\rm Blaschke}(E)\ee^{-\alpha E}\ee^{-\beta/E} 
\label{eq:c32}
\end{equation}
with some $\abs{\gamma}=1$, $\alpha\geq0$ and $\beta=\nu(\{0\})/\pi\geq0$. Our task is to show that $\gamma=1$ and $\alpha=\beta=0$. 

By \eqref{eq:c31} we find that 
\[
\mySigma(E)=1-\frac{{\jap{h}}}{E}+O(E^{-2}), \quad \abs{E}\to\infty. 
\]
On the other hand, the Blaschke product $\mySigma_{\rm Blaschke}(E)$ has the asymptotics
\[
\mySigma_{\rm Blaschke}(E)=1-2\frac{\sum_n E_n}{E}+O(E^{-2})=1-2\frac{\Tr H}{E}+O(E^{-2}), \quad \abs{E}\to\infty. 
\]
Substituting this into \eqref{eq:c32}, we immediately find that $\gamma=1$ and $\alpha=0$. Next, comparing the $O(1/E)$ terms on both sides, we find
\[
{\jap{h}}=2\Tr H+\beta,
\]
which forces $\beta=0$. 
\end{proof}

\subsection{Proof of Theorem~\ref{thm:b2}}
(i) 
Let us prove the identities \eqref{eq:b5}; this is the core of the proof. 
It is convenient to start from the second identity. We use notation ${\jap{h}}$, see \eqref{eq:b4c}. 
Let us apply $H$ to the formula \eqref{eq:c36} for $\theta_E$ and use the first Lyapunov identity \eqref{eq:c1a}:
\begin{align*}
H\theta_E
&=\frac1E\bigl(-Hh-H(DH)(H+E)^{-1}h^{\rm int}\bigr)
\\
&=\frac1E\bigl(-Hh+(DH)H(H+E)^{-1}h^{\rm int}+\jap{(H+E)^{-1}h^{\rm int},h}h \bigr).
\end{align*}
Writing $(DH)H=(DH)(H+E-E)$ here, we find 
\begin{align*}
H\theta_E
=\frac1E\bigl(-Hh+(DH)h^{\rm int}-E(DH)(H+E)^{-1}h^{\rm int}+a(E)h \bigr).
\end{align*}
Using \eqref{eq:c27}, we rewrite this as
\begin{align*}
H\theta_E
=
\frac1E\bigl (-{\jap{h}} h-E(DH)(H+E)^{-1}h^{\rm int}+a(E)h \bigr)
=E\theta_E+\mySigma(E)h, 
\end{align*}
which gives the second identity in \eqref{eq:b5}. 

Next, let us prove the first identity in \eqref{eq:b5}. First, applying \eqref{eq:c34} to $f=h$, we establish an analogue of \eqref{eq:c27}:
\begin{equation}
(D^{-1}H)h=Hh^{\rm int}-{\jap{h}}h^{\rm int}.
\label{eq:c30}
\end{equation}
Next, we apply $H$ to the formula \eqref{eq:c35} of $\phi_E$ and use the second Lyapunov identity in \eqref{eq:c1a}:
\begin{align*}
H\phi_E&
=
\frac1E\bigl(-Hh^{\rm int}-H(D^{-1}H)(H+E)^{-1}h\bigr)
\\
&=
\frac1E\bigl(-Hh^{\rm int}+(D^{-1}H)H(H+E)^{-1}h+\jap{(H+E)^{-1}h,h^{\rm int}}h^{\rm int}    \bigr)
\\
&=\frac1E\bigl( -Hh^{\rm int}+(D^{-1}H)h-E (D^{-1}H)(H+E)^{-1}h +a(E)h^{\rm int}
\bigr).
\end{align*}
Using \eqref{eq:c30}, this transforms as
\begin{align*}
H\phi_E
=
\frac1E\bigl( -{\jap{h}} h^{\rm int}-E (D^{-1}H)(H+E)^{-1}h +a(E)h^{\rm int}
\bigr)
=E\phi_E+\mySigma(E)h^{\rm int}, 
\end{align*}
which proves \eqref{eq:b5}.

(ii)
Suppose, to get a contradiction, that
\[
\theta_E=c\phi_E
\]
for some $c\not=0$. Then we also have 
\[
H\theta_E=c H\phi_E.
\]
Let us write the last condition using the expressions for $H\phi_E$ and $H\theta_E$ from part (i):
\[
E\theta_E+\mySigma(E)h
=
cE\phi_E+c\mySigma(E)h^{\rm int}.
\]
Using $\theta_E=c\phi_E$ again, this yields
\[
\mySigma(E)h=c\mySigma(E)h^{\rm int}.
\]
Since by assumption $E$ is not an eigenvalue of $H$, we can cancel $\mySigma(E)$, which brings us to $h=ch^{\rm int}$. Recalling that 
\[
h(t)=\int_0^\infty \ee^{-t\lambda}\dd\mu(\lambda)
\quad\text{ and }\quad
h^{\rm int}(t)=\int_0^\infty \ee^{-t\lambda}\frac{\dd\mu(\lambda)}{\lambda},
\]
we see that the measures $\dd\mu(\lambda)$ and $\dd\mu(\lambda)/\lambda$ must be collinear. This implies that $\mu$ must be supported at a single point, i.e. $\rank H=1$. 
The proof of Theorem~\ref{thm:b2} is complete. 
\qed


\begin{thebibliography}{10}


\bibitem{BE}
A. Erdelyi et al., 
\emph{Higher Transcendental Functions,} 
Bateman Manuscript Project, Vols. 1, 2, 
McGraw--Hill, New York, 1953.



\bibitem{GR}
I.~S.~Gradshteyn, I.~M.~Ryzhik, 
\emph{Table of Integrals, Series, and Products}, 
Seventh Edition,
Academic Press 2007.

\bibitem{Howland}
J.~Howland, 
\emph{Spectral theory of operators of Hankel type.} I, II.
Indiana Univ. Math. J. \textbf{41} (1992), no. 2, 409--426, 427--434.


\bibitem{KP}
S.~V.~Khrushchev, V.~V.~Peller, 
\emph{Moduli of Hankel operators, past and future,}
in Linear and Complex Analysis Problem Book. 
Lecture Notes in Math., 1043, pp. 92--97. 
Springer, 1984. 


\bibitem{Magnus}
W.~Magnus, 
\emph{On the spectrum of Hilbert’s matrix}, 
Amer. J. Math., \textbf{72} (1950), 699--704.



\bibitem{MPT}
A.~V.~Megretski\u{\i}, V.~V.~Peller, S.~R.~Treil’, 
\emph{The inverse spectral problem for self-adjoint Hankel operators,} 
Acta Math. \textbf{174} (1995), no. 2, 241--309.

\bibitem{Mehler}
F.~G.~Mehler, 
\emph{Ueber eine mit den Kugel- und Cylinderfunctionen verwandte Function und ihre Anwendung in der Theorie der Elektricit\"atsvertheilung,}
Math. Ann. \textbf{18}, no. 2 (1881), 161--194. 

\bibitem{DLMF}
NIST Digital Library of Mathematical Functions. https://dlmf.nist.gov/, Release 1.2.6 of 2026-03-15. F. W. J. Olver, A. B. Olde Daalhuis, D. W. Lozier, B. I. Schneider, R. F. Boisvert, C. W. Clark, B. R. Miller, B. V. Saunders, H. S. Cohl, and M. A. McClain, eds.

\bibitem{Nikolski}
N.~Nikolski, 
\emph{Hardy Spaces},
Cambridge University Press, 2019. 

\bibitem{Ober}
R.~J.~Ober, 
\emph{A note on a system-theoretic approach to a conjecture by Peller-Khrushchev: the general case}, 
IMA J. Math. Control Inform. \textbf{7} (1990), no. 1, 35--45.

\bibitem{Peller}
V.~V.~Peller, 
\emph{Hankel operators and their applications,}
Springer, New York, 2003.

\bibitem{PuTreil0}
A. Pushnitski, S. Treil,
\emph{Unbounded integral Hankel operators,}
Funct. Anal. Appl. \textbf{59} (2025), no. 3, 297--320.

\bibitem{PuTreil}
A.~Pushnitski, S.~Treil,
\emph{Inverse spectral problems for positive Hankel operators,}
to appear in Analysis \& PDE, arXiv:2503.22189. 


\bibitem{PuSobolev}
A.~Pushnitski, A.~Sobolev,
\emph{Hankel operators with band spectra and elliptic functions,}
Duke Math. J. 174, no. 4 (2025), 685--746.

\bibitem{RS1}
M.~Reed, B.~Simon, 
\emph{Methods of modern mathematical physics. I. Functional analysis.} 
Second edition. Academic Press, New York, 1980.

\bibitem{Ros}
M.~Rosenblum, 
\emph{On the Hilbert matrix, I, II,} 
Proc. Amer. Math. Soc., 9 (1958), 137--140, 581--585.

\bibitem{Shanker}
H.~Shanker,
\emph{An integral equation for Whittaker's confluent hypergeometric function,}
Proc. Cambridge Philos. Soc. \textbf{45} (1949), 482--483. 

\bibitem{Simon81}
B.~Simon,
\emph{Spectrum and continuum eigenfunctions of Schr\"odinger operators,}
J. Funct. Anal. \textbf{42} (1981), 347--355. 

\bibitem{Simon}
B.~Simon,
\emph{Schr\"odinger semigroups,}
Bulletin of the AMS, \textbf{7}(3) (1982), 447--526.

\bibitem{Shnol}
E.~E.~Shnol', 
\emph{On the behavior of the eigenfunctions of Schr\"odinger's equation} (in Russian),
Mat. Sb. (N.S.), \textbf{42(84)}:3 (1957), 273--286.


\bibitem{Titchmarsh}
E.~C.~Titchmarsh,
\emph{Eigenfunction expansions associated with second-order differential equations,}
Oxford University Press, Oxford, 1962.


\bibitem{Ya1}
D.~Yafaev,
\emph{A commutator method for the diagonalization of Hankel operators,}
Funct. Anal. Appl. \textbf{44} no. 4 (2010), 295--306.



\bibitem{Ya2}
D.~Yafaev,
\emph{Quasi-diagonalization of Hankel operators,}
Journal d'Analyse Mathematique, \textbf{133} (2017), 133--182.


\bibitem{Ya3}
D.~Yafaev,
\emph{Spectral and scattering theory for perturbations of the Carleman operator,}
St. Petersburg Math. J. \textbf{25} (2014), no. 2, 339--359.

\bibitem{Yakubovich}
S.~B.~Yakubovich, 
\emph{Index Transforms,} 
World Scientific, 1996.

\end{thebibliography}
\end{document}